\numberwithin{equation}{section}
\renewcommand\labelenumi{(\roman{enumi})}
\renewcommand\theenumi\labelenumi
\newcommand{\N}{\mathbb{N}}
\newcommand{\Z}{\mathbb{Z}}
\newcommand{\Q}{\mathbb{Q}}
\newcommand\FF{\mathbb{F}}
\newcommand\DD{\mathcal{D}}
\newcommand\B{\mathfrak{B}}
\newcommand{\leg}[2]{\left(\frac{#1}{#2}\right)}
\newcommand{\M}{\mathcal{M}}
\newcommand\EE{\mathbb{E}}
\DeclareMathOperator{\Gal}{Gal}
\DeclareMathOperator{\rk}{rk}
\DeclareMathOperator{\li}{li}
\DeclareMathOperator{\Ei}{Ei}
\DeclareMathOperator{\cork}{corank}
\DeclareMathOperator{\prob}{Prob}
\DeclareMathOperator{\Msym}{Sym}
\DeclareMathOperator{\Mat}{Mat}
\DeclareMathOperator{\Art}{Art}
\DeclareMathOperator{\Frob}{Frob}
\DeclareMathOperator{\im}{im}
\DeclareMathOperator{\CL}{Cl}
\newtheorem{lemma}{Lemma}[section]
\newtheorem{theorem}[lemma]{Theorem}
\newtheorem{prop}[lemma]{Proposition}
\newtheorem{mydef}[lemma]{Definition}
\title{
\vspace{-\baselineskip}\sffamily\bfseries On the negative Pell equation}
\author{Stephanie Chan\thanks{Department of Mathematics, University College London, Gower Street, London, WC1E 6BT, United Kingdom, \url{stephanie.chan.16@ucl.ac.uk}}
,
Peter Koymans\thanks{Mathematisch Instituut, Leiden University, Niels Bohrweg 1, 2333 CA Leiden, Netherlands, \url{p.h.koymans@math.leidenuniv.nl}}
,
Djordjo Milovic\thanks{Department of Mathematics, University College London, Gower Street, London, WC1E 6BT, United Kingdom, \url{djordjo.milovic@ucl.ac.uk}}
, and
Carlo Pagano\thanks{Max Planck Institute for Mathematics, Vivatsgasse 7, 5311  Bonn, Germany, \url{carlein90@gmail.com}}}
\date{\today}
\begin{document}
\maketitle

\begin{abstract}
Using a recent breakthrough of Smith \cite{Smith}, we improve the results of Fouvry and Kl\"uners \cite{FK1} on the solubility of the negative Pell equation.
Let $\mathcal{D}$ denote the set of fundamental discriminants having no prime factors congruent to $3$ modulo $4$. Stevenhagen \cite{Stevenhagen} conjectured that the density of $D$ in $\mathcal{D}$ such that the negative Pell equation $x^2-Dy^2=-1$ is solvable with $x,y\in\Z$ is $58.1\%$, to the nearest tenth of a percent. By studying the distribution of the $8$-rank of narrow class groups $\CL^+(D)$ of $\Q(\sqrt{D})$, we prove that the infimum of this density is at least $53.8\%$.
\end{abstract}

\tableofcontents

\section{Introduction}
\label{sIntro}
In recent years, much progress has been made in the study of the distribution of $2$-parts of class groups of quadratic number fields, most notably by Fouvry and Kl\"{u}ners \cite{FK1} and Smith \cite{Smith}. One way to test the robustness of new methods in this subject is to study their applications to a conjecture of Stevenhagen \cite{Stevenhagen} concerning the solvability over $\Z$ of the negative Pell equation
\begin{equation}\label{negativePell}
x^2 - Dy^2 = -1.
\end{equation}
Here and henceforth we take $D$ to be a positive fundamental discriminant. The equation~\eqref{negativePell} is solvable over $\Z$ if and only if the ordinary and narrow class groups of the quadratic field $\Q(\sqrt{D})$, denoted by $\CL(D)$ and $\CL^+(D)$ respectively, coincide. As the odd parts of $\CL(D)$ and $\CL^+(D)$ are isomorphic, the frequency of solvability of~\eqref{negativePell} is intricately related to the joint distribution of $2$-primary parts $\CL(D)$ and $\CL^+(D)$. We note that $\CL(D)/2\CL(D) \cong \CL^+(D)/2\CL^+(D)$ if and only if $D$ is in the set
\[
\mathcal{D} = \{D\text{ positive fundamental discriminant}: p\not\equiv 3\bmod 4\text{ for all primes }p\mid D\},
\]
which we occasionally refer to as the \textit{Pell family} of fundamental discriminants. As $\mathcal{D}$ has natural density $0$ in the set of all positive fundamental discriminants, it is more meaningful to study density questions concerning the solvability of~\eqref{negativePell} relative to $\mathcal{D}$ than relative to the set of all positive fundamental discriminants. 

One of the main conjectures in \cite{Stevenhagen} is that
\begin{equation}\label{fullconj}
\lim_{X\rightarrow\infty}\frac{|\mathcal{D}^{-}(X)|}{|\mathcal{D}(X)|} = 1-\alpha = 0.58057...,
\end{equation} 
where
$$
\mathcal{D}(X) = \{D\in\mathcal{D}: D\leq X\},
$$
$$
\mathcal{D}^{-}(X) = \{D\in\mathcal{D}(X): \eqref{negativePell}\text{ is solvable over }\Z\},
$$
and
\[
\alpha =  \prod_{j \text{ odd}}(1 - 2^{-j}) = \prod_{j = 1}^{\infty}(1 + 2^{-j})^{-1} = 0.41942\ldots.
\] 
Until now, the best bounds in the direction of Stevenhagen's conjecture are due to Fouvry and Kl\"{u}ners \cite{FK2, FK3}, who used the methods they developed in \cite{FK1} to prove that 
\begin{equation}\label{FKbounds}
\frac{5}{4}\alpha \leq \liminf_{X\rightarrow\infty}\frac{|\mathcal{D}^{-}(X)|}{|\mathcal{D}(X)|}\leq \limsup_{X\rightarrow\infty}\frac{|\mathcal{D}^{-}(X)|}{|\mathcal{D}(X)|}\leq \frac{2}{3}.
\end{equation}
By incorporating the methods developed by Smith \cite{Smith}, we can improve the lower bound. 
\begin{theorem}\label{Thm1}
With $\mathcal{D}(X)$, $\mathcal{D}^-(X)$, and $\alpha$ defined as above, we have
\[
\liminf_{X\rightarrow\infty}\frac{|\mathcal{D}^{-}(X)|}{|\mathcal{D}(X)|}\geq \alpha\beta,
\]
where
\[
\beta = \sum_{n = 0}^{\infty}2^{-n(n+3)/2} = 1.28325\ldots.
\]
\end{theorem}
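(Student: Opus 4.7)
The plan is to reduce the solvability of the negative Pell equation to a question about the 2-primary structure of the narrow class group, and then apply Smith's breakthrough on the distribution of $2^k$-ranks. For $D\in\DD$, equation~\eqref{negativePell} is solvable over $\Z$ if and only if $\CL^+(D)=\CL(D)$, which is equivalent to the vanishing in $\CL^+(D)$ of a distinguished class $\tau_D\in\CL^+(D)[2]$ generating the cokernel of the natural injection $\CL(D)\hookrightarrow\CL^+(D)$. The problem therefore becomes to lower-bound the density of $D\in\DD$ with $\tau_D=0$, equivalently with $\tau_D\in 2^n\CL^+(D)$ for every $n\geq 0$. My first task would be to make this reduction explicit and to express the $n$-th divisibility condition on $\tau_D$ in terms of higher R\'edei-type symbols, extending the classical R\'edei symbol (which governs the case $n=1$, that is, 4-rank information).

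Next I would filter $\DD(X)$ by the 4-rank of $\CL^+(D)$. Computations with the R\'edei matrix, in the spirit of Fouvry-Kl\"uners, yield the asymptotic density of $D\in\DD$ with prescribed 4-rank $r$, producing the factor $\alpha$ familiar from Stevenhagen's conjectural density; this alone is essentially what gives the earlier bound $\tfrac54\alpha$. To go further, Smith's Markov-type model for the 2-part of class groups, adapted to the narrow class group of $\Q(\sqrt{D})$ and to the restricted family $\DD$, should supply, conditional on 4-rank $r$, the asymptotic density of $D$ for which $\tau_D$ is 2-power divisible to every order in $\CL^+(D)$. Combining these inputs, the total density becomes a sum over 4-ranks $r\geq 0$ whose evaluation, along Cohen-Lenstra-type lines on the conditional probabilities, equals $\alpha\sum_{n\geq 0}2^{-n(n+3)/2}=\alpha\beta$, giving the desired lower bound.

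The principal obstacle lies in extending Smith's equidistribution machinery so as to track the specific element $\tau_D$ jointly with the rest of the 2-part of $\CL^+(D)$. Smith's argument, for imaginary quadratic fields, evaluates through a Chebotarev-type theorem over a tower of unramified quadratic extensions certain multiple character sums that govern the joint distribution of $2^k$-ranks; here the archimedean Frobenius producing $\tau_D$ must be woven into this framework, and the enlarged family of character sums must still exhibit sufficient cancellation. Additional technical difficulties arise from working in the real quadratic setting (signs of units, narrow vs.\ ordinary class groups) and from restricting to the thin family $\DD$, whose defining congruence conditions on prime divisors must be handled uniformly. Once these equidistribution statements are established with sufficient uniformity in $r$ and $X$, the lower bound $\alpha\beta$ follows by summing the resulting conditional densities term by term, and by verifying that each summand contributes at least $\alpha\cdot 2^{-r(r+3)/2}$ in the limit.
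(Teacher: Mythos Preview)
Your plan diverges from the paper's, and it has a gap that matters. The paper does \emph{not} track the archimedean class $\tau_D$ through all orders of $2$-divisibility. Instead it proves only an $8$-rank result (Theorem~\ref{Thm2}): the joint density of $D\in\DD$ with $\rk_4\CL(D)=\rk_4\CL^+(D)=n$ and $\rk_8\CL^+(D)=m$ is computed explicitly. Setting $m=0$, such $D$ automatically have $\CL^+(D)\cong\CL(D)$ (same $2$-rank and $4$-rank, and the $2$-part of $\CL^+(D)$ has exponent dividing $4$), hence \eqref{negativePell} is solvable; the density of this event works out to $\alpha\cdot 2^{-n(n+3)/2}$, and summing over $n$ gives $\alpha\beta$. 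No information beyond the $8$-rank is used or needed.

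Your proposal instead aims to determine, for each $4$-rank $r$, the full conditional probability that $\tau_D=0$ via Smith's higher $2^k$-rank machinery. If that worked, it would yield the Stevenhagen density $1-\alpha$, not $\alpha\beta$; your claim that the resulting sum equals $\alpha\beta$ is therefore inconsistent with your own outline. More seriously, you underestimate the central obstruction that the paper isolates: for $D\in\DD$ the two spaces of divisors governing $2\CL^+(D)[4]$ and $2\CL^+(D)^\vee[4]$ coincide (see \eqref{eq:samespace}), which means that in Smith's language \emph{no} assignment $a$ is generic and all of $\DD$ lands in Smith's error term. The paper circumvents this only at the $8$-rank level, by replacing Smith's general governing expansions with explicit R\'edei-symbol identities (Theorems~\ref{t2.8}--\ref{t.2.8swapped}) and a refined notion of genericity (equations~\eqref{eGenVar}--\eqref{eGenCheb}). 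There is no known analogue of these identities for higher $2^k$-ranks in the Pell family, so the extension you gesture at --- ``weaving the archimedean Frobenius into Smith's framework'' for all $k$ --- is precisely the open problem, and your plan gives no mechanism for it.
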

We note that $\beta>5/4$. To the nearest tenth of a percent, Stevenhagen's conjecture states that the density of $D\in\DD$ for which \eqref{negativePell} is solvable over $\Z$ is $58.1\%$, Fouvry and Kl\"{u}ners proved that the lower density is at least $52.4\%$, and we prove that the lower density is at least $53.8\%$.

For a finite abelian group $G$ and an integer $k\geq 1$, we let $\rk_{2^k}G = \dim_{\FF_2}(2^{k-1}G/2^kG)$; this is called the $2^k$-rank of $G$. The non-increasing sequence of non-negative integers $\{\rk_{2^k}G\}_k$ determines the isomorphism class of the $2$-primary part of $G$. Hence
\[
\eqref{negativePell}\text{ is solvable}\Longleftrightarrow \rk_{2^k}\CL(D) = \rk_{2^k}\CL^+(D)\text{ for all integers }k\geq 1.
\]
The lower bound in \eqref{FKbounds} comes from proving that the density of $D\in\mathcal{D}$ such that
\[
\rk_4\CL^+(D) = 0
\]
is equal to $\alpha$ and the density of $D\in\mathcal{D}$ such that 
\[
\rk_4\CL(D) = \rk_4\CL^+(D) = 1\text{ and }\rk_8\CL^+(D) = 0
\]
is equal to $\alpha/4$. We obtain our lower bound by proving that the density of $D\in\mathcal{D}$ such that 
\[
\rk_4\CL(D) = \rk_4\CL^+(D) = n\text{ and }\rk_8\CL^+(D) = 0.
\]
is equal to $2^{-n(n+3)/4}\alpha$. In fact, we will prove more.
\begin{theorem}\label{Thm2}
Let $\mathcal{D}(X)$ and $\alpha$ be as above, and, for integers $n\geq m\geq 0$, let
\[
\mathcal{D}_{n, m}(X) = \{D\in \mathcal{D}(X): \rk_4\CL(D) = \rk_4\CL^+(D) = n\text{ and }\rk_8\CL^+(D) = m\}
\]
Then
\[
\lim_{X\rightarrow\infty}\frac{|\mathcal{D}_{n, m}(X)|}{|\mathcal{D}(X)|} = \alpha\cdot 2^{-n(n+1)}\frac{\prod_{j = m+1}^n(2^n - 2^{n-j})}{\prod_{k = 1}^m(2^k-1)\prod_{l = 1}^{n-m}(2^l-1)}.
\]
\end{theorem}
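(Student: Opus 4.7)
The strategy is to split the joint statistics into a 4-rank count (to which Rédei-Reichardt theory applies) and an 8-rank count conditional on the 4-rank (to which Smith's machinery applies), and then to account for the coupling $\rk_4 \CL(D) = \rk_4 \CL^+(D)$ as a single $\FF_2$-linear constraint relating the narrow and ordinary Rédei matrices. Throughout, $D \in \mathcal{D}(X)$ is parametrized by its ordered list of prime factors, so that all relevant invariants become functions of arithmetic symbols between these primes.

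First, I would encode $\rk_4 \CL^+(D) = n$ as a corank condition on the narrow Rédei matrix $R_D^+$, whose $\FF_2$-valued entries are Legendre symbols between the prime factors of $D$, and analogously encode $\rk_4 \CL(D) = n$ via the ordinary Rédei matrix $R_D$; the two matrices differ by a distinguished row/column encoding the archimedean place. The simultaneous condition $\rk_4 \CL(D) = \rk_4 \CL^+(D)$ then amounts to a single linear dependence involving this extra row/column and the kernel of $R_D^+$. The density of $D \in \mathcal{D}(X)$ satisfying the resulting linear-algebraic constraints can be extracted by a Fouvry-Kl\"uners type argument, combining genus theory with equidistribution of Legendre symbols among primes $p \equiv 1 \bmod 4$ via bounds for character sums over sifted sets.

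Next, conditional on the 4-rank data, I would invoke Smith's framework to govern the 8-rank of $\CL^+(D)$. In Smith's formalism the $2^{k+1}$-rank is controlled by Rédei-style symbols of order $k$, which equidistribute as $D$ varies through a generic subfamily; specialized to $k = 2$, this turns the conditional law of $\rk_8 \CL^+(D)$ given $\rk_4 \CL^+(D) = n$ into the corank distribution of a uniformly random matrix in a natural $n$-dimensional $\FF_2$-matrix ensemble. A direct count of such matrices of corank $m$ produces the combinatorial factor
\[
\frac{\prod_{j = m+1}^n (2^n - 2^{n-j})}{\prod_{k=1}^m (2^k - 1) \prod_{l=1}^{n-m}(2^l - 1)},
\]
and assembling this with the 4-rank density from the first step yields the $\alpha \cdot 2^{-n(n+1)}$ prefactor in the claimed density; the constant $\alpha$ matches the base case $n = m = 0$ of Fouvry-Kl\"uners and appears as a Cohen-Lenstra normalization.

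The main obstacle is ensuring that Smith's equidistribution of higher Rédei symbols is compatible with the coupling $\rk_4 \CL(D) = \rk_4 \CL^+(D)$: one must verify that conditioning on this archimedean constraint does not bias the distribution of the second-order Rédei symbols that govern the 8-rank, which in turn requires transporting Smith's arguments from the imaginary quadratic setting to the real quadratic setting while tracking the behaviour of the infinite place. Once this independence is established, the remaining ingredients, namely Fouvry-Kl\"uners equidistribution of Legendre symbols, Rédei-Reichardt for the 4-rank, and the $\FF_2$-linear algebra count of matrices of prescribed corank, combine cleanly to yield the stated density.
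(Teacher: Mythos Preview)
Your high-level decomposition --- compute the $4$-rank density via R\'edei matrices and Fouvry--Kl\"uners, then apply Smith's machinery conditionally to the $8$-rank --- matches the paper's strategy, and you correctly isolate the crux as the interaction between the archimedean coupling and Smith's equidistribution. However, you underestimate the obstruction, and this is a genuine gap.

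The issue is not merely that one must ``track the behaviour of the infinite place'' when transporting Smith's argument. For $D$ in the Pell family the two spaces
\[
\{a\mid D:\ a>0,\ (a,-D/a)=1\}\quad\text{and}\quad\{b\mid D:\ b>0,\ (b,D/b)=1\}
\]
coincide, because no prime divisor of $D$ is $3\bmod 4$. Smith's method requires these spaces to be typically disjoint: this is what allows one to choose ``variable indices'' so that varying a prime moves only one side of the Artin pairing. When the spaces coincide, every $D\in\mathcal{D}$ is non-generic in Smith's sense and lands in his error term. So one cannot simply verify compatibility; one must supply new algebraic identities replacing Smith's Theorem~2.8. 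The paper does this by proving additional R\'edei-symbol reflection formulas (the ``self'' and ``swapped'' variants, Theorems~\ref{t2.8self}, \ref{t.2.8swapmin}, \ref{t.2.8swapped}) that control sums of Artin pairings $\langle\chi_{p_ia},p_ia\rangle$ and $\langle\chi_{p_ia},q_jb\rangle+\langle\chi_{q_jb},p_ia\rangle$ where the same divisor appears on both sides. These have no analogue in Smith's work and are the main new content; your proposal does not anticipate them.

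A smaller point: your description of the matrix model is slightly off. The Artin pairing is not on an $n$-dimensional space but on $\mathcal{V}_{a,2}$, which has dimension $n+1$ because the all-ones vector $R$ is always in the kernel of the symmetric Legendre matrix. Since $R$ lies in the right kernel of the pairing, the pairing is effectively an $(n+1)\times n$ matrix over $\FF_2$; the coupling $\rk_4\CL(D)=\rk_4\CL^+(D)$ is precisely the condition that the row indexed by $R$ vanishes, and $\rk_8\CL^+(D)=m$ is the condition that this matrix has rank $n-m$. The density $Q(n|m)$ in the paper counts uniformly random $(n+1)\times n$ matrices with zero bottom row and rank $n-m$, which after unpacking gives the combinatorial factor you wrote together with the extra $2^{-n}$ contributing to the $2^{-n(n+1)}$ prefactor.
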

We note that our proof of Theorem~\ref{Thm2} gives an alternative proof of \cite[Theorem~2]{FK1} and \cite[Theorem~2]{FK3}.

The major novel difficulty with working in the Pell family is that the discriminants $D \in \mathcal{D}$ have the remarkable property that the spaces 
\[
\{a \mid D : a > 0,\ a \text{ squarefree }, (a, -D/a) = 1\}
\]
and
\[
\{b \mid D : b > 0,\ b \text{ squarefree }, (b, D/b) = 1\}
\]
coincide. However, for Smith's method to work, it is essential that these spaces are typically disjoint. For instance, this is used in \cite[p.76]{Smith} to argue that most assignments $a$ are generic. If $a$ is not generic, then $a$ ends up in the error term. The reason for this is that the algebraic results break down in this case since there is no valid choice of ``variable indices''. In particular, all discriminants $D \in \mathcal{D}$ end up in the error term of Smith's theorem. It is therefore of utmost importance to extend Smith's algebraic results.

We introduce a more careful notion of genericity in equation~\eqref{eGenVar} and equation~\eqref{eGenCheb} to circumvent this pitfall. We have also devoted Section \ref{sAlg} to prove several new algebraic results. These algebraic results essentially rely on the fact that we are working with the 8-rank, which brings manipulations with R\'{e}dei symbols into play, see \cite{Redei-Stevenhagen} for an extensive treatment of R\'{e}dei symbols. Note that this approach is inspired by Smith's first paper \cite{Smith8}. However, the result in \cite{Smith8} assumes GRH, which we avoid by borrowing from the ideas that Smith introduced in his breakthrough paper \cite{Smith}.

In Section~\ref{sPrimes}, we give more direct proofs of the results that appear in \cite[Section 5]{Smith} and concern the typical distribution of prime divisors of a squarefree integer. Of course, we once again adapt these results to $D$ coming from the Pell family $\DD$. 

Finally, we would like to mention a recent paper of Knight and Xiao \cite{KX} claiming to establish \eqref{fullconj} in full. However, we were unable to verify \cite[Equation (9.8)]{KX}, which is related to the issues of genericity discussed above.

\subsection*{Acknowledgements}
We would like to thank Andrew Granville for his support throughout this project.
We wish to thank Peter Stevenhagen for some very helpful discussions at the beginning of the project. We are grateful to Alexander Smith for answering questions about his paper and for several conversations on the challenges of this project. We would also like to thank Kevin Ford, Andrew Granville and Sam Porritt for their useful suggestions in writing some of the proofs in Section~4.

The first and third authors were supported by the European Research Council grant agreement No.\ 670239.
The fourth author is grateful to the Max Planck Institute for Mathematics of Bonn for its financial support.

\section{Algebraic results}
\label{sAlg}
We start this section by introducing the R\'{e}dei symbol, which will play a prominent role throughout the paper. Then we prove several identities on the sum of four R\'{e}dei symbols, which serve as the algebraic input for our analytic machinery in proving equidistribution.

\subsection{R\'{e}dei symbols}
We shall review the fundamental properties of the R\'{e}dei symbols, needed to state and establish Theorem~\ref{t2.8}, Theorem~\ref{t2.8self}, Theorem~\ref{t.2.8swapmin} and Theorem~\ref{t.2.8swapped}. Our main reference is Stevenhagen's recent work \cite{Redei-Stevenhagen}.

\begin{mydef}
Write $\Omega$ for the collection of the places of $\Q$. 
For a place $v$ in $\Omega$, we write $(-,-)_v$ for the Hilbert symbol. 
If $K/\Q$ is a finite extension, write $\Delta_K$ as the discriminant of $K/\Q$.
\end{mydef}
\begin{mydef}
Let $a,b \in \Q^{*}/(\Q^{*})^2$.
If $a$ is non-trivial, write $\chi_a$ as the unique character $\chi_a: G_{\Q}\rightarrow \mathbb{F}_2$ with kernel $G_{\Q(\sqrt{a})}$.
We say that $(a,b)$ is acceptable if we have that $(a,b)_v=1$ for each $v \in \Omega$.  
\end{mydef}

In case one of $a,b$ is trivial, then $(a,b)$ is clearly acceptable. Now suppose $a$ and $b$ are both non-trivial. Then $(a,b)$ is acceptable if and only if there exists a Galois extension $L/\Q$ containing $\Q(\sqrt{a},\sqrt{b})$, with $\Gal(L/\Q(\sqrt{ab}))$ cyclic of order $4$, and such that every element $\sigma \in \Gal(L/\Q)$ with $\chi_a(\sigma)\neq \chi_b(\sigma)$ must be an involution, i.e. $\sigma^2=\text{id}$.  If $a=b$, we are simply requiring $L/\Q$ to be a cyclic extension of degree $4$ of $\Q$ containing $\Q(\sqrt{a})$. While if $a \neq b$, we are requiring $L/\Q$ to be dihedral of degree $8$, with $\Gal(L/\Q(\sqrt{ab}))$ cyclic of order $4$. 
When $a,b$ are both non-trivial and $(a,b)$ is acceptable, denote by $\mathcal{F}_{a,b}$ the collection of fields $L/\Q$ described above. 

Write $\Gamma_{\mathbb{F}_2}(\Q):=\text{Hom}_{\text{top.gr.}}(G_{\Q},\mathbb{F}_2)$. For $\chi \in \Gamma_{\mathbb{F}_2}(\Q)$, write $\Q(\chi):=(\Q^{\text{sep}})^{\text{ker}(\chi)}$. We put $\Gamma_{\mathbb{F}_2}(\Q,\{a,b\}):=\frac{\Gamma_{\mathbb{F}_2}(\Q)}{\langle \{\chi_a,\chi_b\} \rangle}$. It can be easily shown that the set $\mathcal{F}_{a,b}$ is equipped of a \emph{difference}, which is a map
\[-:\mathcal{F}_{a,b} \times \mathcal{F}_{a,b} \to \Gamma_{\mathbb{F}_2}(\Q,\{a,b\}),
\] 
with the property that for $L_1,L_2 \in \mathcal{F}_{a,b}$ one has  $\Q(\chi)\cdot L_2 \supseteq L_1$ if and only if $\chi=L_2-L_1$. For $L_1,L_2,L_3 \in \mathcal{F}_{a,b}$  one has that
\[(L_3-L_2)+(L_2-L_1)=L_3-L_1.
\]
We have that $L_2-L_1=0$ if and only if $L_1=L_2$. Therefore each $L \in \mathcal{F}_{a,b}$ induces an explicit bijection between $\mathcal{F}_{a,b}$ and $\Gamma_{\mathbb{F}_2}(\Q,\{a,b\})$. For any subgroup $H \leq \Gamma_{\mathbb{F}_2}(\Q,\{a,b\})$, we say that $S \subseteq \mathcal{F}_{a,b}$ is a $H$-coset if there exists some $s_0 \in S$ such that $S=\{s \in \mathcal{F}_{a,b}:s-s_0 \in H\}$.

Now let $(a,b)$ be an acceptable pair such that $ab$ is not divisible by any prime congruent to $3$ modulo $4$. Write $a=t_a\prod_{l \mid a} l$ and $b=t_b\prod_{l' \mid b} l'$, where the products run over all odd primes $l\mid a$ and $l'\mid b$. 
Define $\Gamma_{\mathbb{F}_2}^{\text{unr}}(\Q,\{a,b\})$ to be the subgroup of $\Gamma_{\mathbb{F}_2}(\Q,\{a,b\})$ generated by the set $\{\chi_p:p \mid ab\} \cup \{\chi_{t_a},\chi_{t_b}\}$.

One calls an element $L \in \mathcal{F}_{a,b}$ \emph{minimally ramified} if it satisfies the following two properties. Firstly $L/\Q(\sqrt{a},\sqrt{b})$ does not ramify above any finite place $v\nmid\gcd(\Delta_{\Q(\sqrt{a})},\Delta_{\Q(\sqrt{b})})$. Secondly if one element of $\{a,b\}$ is even and the other is $5$ modulo $8$, we ask that $L/\Q(\sqrt{ab})$ is $2$-minimally ramified, see \cite[Definition~7.3]{Redei-Stevenhagen}. 

 We denote by $\mathcal{F}_{a,b}^{\text{unr}}$ the subset of $\mathcal{F}_{a,b}$ consisting of minimally ramified elements. As it is shown in \cite[Lemma~7.5]{Redei-Stevenhagen}, the set $\mathcal{F}_{a,b}^{\text{unr}}$ is a $\Gamma^{\text{unr}}(\Q,\{a,b\})$-coset (which in particular implies that it is non-empty). 
 
\begin{mydef}
Let $(a,b,c)$ be a triple with $ a,b,c \in (\Q^{*})/(\Q^{*})^2$.
We say that $(a,b,c)$ is jointly unramified if 
\[
\gcd(\Delta_{\Q(\sqrt{a})}, \Delta_{\Q(\sqrt{b})}, \Delta_{\Q(\sqrt{c})})=1.
\] 
We say that $(a,b,c)$ is admissible, if all $(a,b),(a,c),(b,c)$ are acceptable pairs, $abc$ is not divisible by any prime congruent to $3$ modulo $4$, and $(a,b,c)$ is jontly unramified. 
\end{mydef}

Observe that if a triple is admissible then so is any permutation of it. 

\begin{mydef}
For any admissible triple $(a,b,c)$, define the R\'{e}dei symbol $[a,b,c] \in \mathbb{F}_2$ as follows\footnote{We use, in contrast to \cite{Redei-Stevenhagen}, the convention that R\'{e}dei symbols take their values in $\mathbb{F}_2$, since this shall be notationally more convenient in the rest of the paper.}. 
If any of $a,b,c$ is trivial, set $[a,b,c]:=0$. 
Assuming $a,b,c$ are all non-trivial,
choose $L \in \mathcal{F}_{a,b}^{\text{unr}}$
and $\mathfrak{c}$ an integral ideal of norm $c$ in the ring of integer of $\Q(\sqrt{ab})$, existence of $\mathfrak{c}$ follows from admissibility of $(a,b,c)$.
Define 
\[[a, b, c]:=
\begin{cases}
\left[\frac{L/\Q(\sqrt{ab})}{\mathfrak{c}}\right] &\text{if }c>0 \vspace{5pt}\\
\left[\frac{L/\Q(\sqrt{ab})}{\mathfrak{c}\tilde{\infty}}\right]&\text{if }c<0.
\end{cases}
\]
We identify the Artin symbol with its image under the isomorphism $\Gal(L/\Q(\sqrt{a},\sqrt{b}))\cong \mathbb{F}_2$.
\end{mydef}

A priori the resulting symbol would depend on the choices of $L$ and $\mathfrak{c}$, and so the notation should reflect this dependency. However the following theorem shows in particular that the symbol does not depend on any of the choices. Since, in this logical structure, this independence cannot be assumed in the statement of R\'{e}dei reciprocity, the reader shall interpret every R\'{e}dei symbol appearing there as the result of one of the above choices. For a proof see \cite[Theorem~7.7]{Redei-Stevenhagen}. 
\begin{theorem} \label{thm:RR} \emph{(R\'{e}dei reciprocity)} Let $(a,b,c)$ be an admissible triple. Then
\begin{equation}\label{eq:RR}
[a,b,c]=[a,c,b].
\end{equation}
\end{theorem}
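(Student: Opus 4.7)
The plan is to establish Rédei reciprocity by constructing a single governing number field $M/\Q$ that simultaneously contains representatives of $\mathcal{F}_{a,b}^{\text{unr}}$ and $\mathcal{F}_{a,c}^{\text{unr}}$, so that both $[a,b,c]$ and $[a,c,b]$ can be read off from Frobenius elements in $\Gal(M/\Q)$. This is in the spirit of Rédei's original argument and Stevenhagen's modern treatment in \cite{Redei-Stevenhagen}.

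First I would dispatch the well-definedness of $[a,b,c]$ as a preliminary. Independence from the choice of $L \in \mathcal{F}_{a,b}^{\text{unr}}$ uses that this set is a $\Gamma^{\text{unr}}_{\FF_2}(\Q,\{a,b\})$-coset, and the generating characters of that group either factor through $\langle \chi_a,\chi_b\rangle$ (and so vanish after projecting to $\Gal(L/\Q(\sqrt{a},\sqrt{b}))$) or are ramified only at primes dividing $ab$, where $\mathfrak{c}$ has no support by joint unramifiedness. Independence from $\mathfrak{c}$ is a standard class-field-theoretic check: any two integral ideals of norm $c$ in $\Q(\sqrt{ab})$ differ by an ideal whose class acts trivially on the cyclic extension $L/\Q(\sqrt{ab})$ after passing to the appropriate quotient.

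For the reciprocity itself, I would pick $L_1 \in \mathcal{F}_{a,b}^{\text{unr}}$ and $L_2 \in \mathcal{F}_{a,c}^{\text{unr}}$ and set $M := L_1 L_2$. Admissibility forces $M/\Q$ to be Galois with Galois group $G$ a $2$-group whose abelianization surjects onto $\Gal(\Q(\sqrt{a},\sqrt{b},\sqrt{c})/\Q)\cong (\Z/2)^3$. Choose integral ideals $\mathfrak{c}\subset\OO_{\Q(\sqrt{ab})}$ of norm $c$ and $\mathfrak{b}\subset\OO_{\Q(\sqrt{ac})}$ of norm $b$. Viewing the associated Frobenius elements as elements of $G$ and projecting into the central piece $\Gal(M/\Q(\sqrt{a},\sqrt{b},\sqrt{c}))$, both $[a,b,c]$ and $[a,c,b]$ become evaluations of a single $3$-linear invariant $(\chi_a,\chi_b,\chi_c)\mapsto\langle\chi_a,\chi_b,\chi_c\rangle$ — morally the triple Massey product in $H^{*}(G_\Q,\FF_2)$ — built out of commutator computations in $G$. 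Its $b\leftrightarrow c$ symmetry then follows from the vanishing of the pairwise Hilbert symbols $(a,b)_v$, $(a,c)_v$, $(b,c)_v$ at every place, which is exactly what acceptability of all three pairs guarantees.

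The main obstacle, I expect, is the careful bookkeeping at the prime $2$ and the archimedean place. The $2$-minimally ramified condition built into the definition of $\mathcal{F}_{a,b}^{\text{unr}}$ in the mixed-parity case — one of $a,b$ even and the other $\equiv 5\bmod 8$ — is precisely the normalization needed to preserve the symmetry; without it, the two sides would differ by a local correction at $2$. Likewise, the convention of adjoining $\tilde\infty$ in the Artin symbol when $c<0$, together with the parallel modification when $b<0$, must be tracked symmetrically to ensure invariance under the interchange $b\leftrightarrow c$.
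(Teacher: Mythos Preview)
The paper does not give its own proof of this theorem: it simply cites \cite[Theorem~7.7]{Redei-Stevenhagen}. So there is nothing substantive to compare your argument against in the present text; your sketch is broadly in the spirit of Stevenhagen's treatment that the paper defers to.

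One point of logical ordering is worth flagging. You propose to establish well-definedness of $[a,b,c]$ \emph{before} proving reciprocity, treating independence of $L$ and of $\mathfrak{c}$ as preliminary lemmas. The paper takes the opposite route: it interprets both sides of \eqref{eq:RR} as depending on auxiliary choices, proves the equality for \emph{any} such choices, and only afterwards reads off independence of the choices from reciprocity itself (see the paragraph immediately following the theorem). The authors explicitly remark that your ordering is possible but ``substantially more involved'' --- and indeed your one-line dismissal of independence from $\mathfrak{c}$ (``any two integral ideals of norm $c$ differ by an ideal whose class acts trivially'') is where the difficulty hides: for split primes $p\mid c$ the two primes above $p$ are Galois-conjugate rather than differing by a principal ideal, and controlling the effect of that conjugation on the Artin symbol in $\Gal(L/\Q(\sqrt{a},\sqrt{b}))$ requires real work. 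Your sketch is not wrong, but that step is not ``standard'' in the way you suggest.
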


We can fix an element in $\mathcal{F}_{a,c}^{\text{unr}}$ and an integral ideal of norm $b$ in $\Q(\sqrt{ac})$, this fixes $[a,c,b]$ in \eqref{eq:RR}. We are still allowed to take any element in $\mathcal{F}_{a,b}^{\text{unr}}$ and any integral ideal of norm $c$ in $\Q(\sqrt{ab})$, and the value of $[a,b,c]$ has to be unchanged by \eqref{eq:RR}. This shows that the choices made in defining the R\'{e}dei symbol do not affect the final value of the symbol. Also observe that the symbol $[a,b,c]$ trivially does not depend on the order of the first two entries, so Theorem~\ref{thm:RR} shows that the symbol $[a,b,c]$ is invariant under any permutation of the entries. 

As a consequence of R\'{e}dei reciprocity, the following proposition shows that the R\'{e}dei symbol is linear in every entry. 

\begin{prop} 
\label{prop:bilinear}
Let $(a,b,c)$, $(a,b',c)$ be two admissible triples. Then $(a,bb',c)$ is also an admissible triple and furthermore
\[
[a,b,c]+[a,b',c]=[a,bb',c].
\]
Since admissibility and the R\'{e}dei symbol does not depend on the order of $a,b,c$ in the triple, the corresponding statements hold for all three entries.
\end{prop}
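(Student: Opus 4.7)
The plan is to use R\'{e}dei reciprocity to transfer the multiplicative variable from the second entry to the third, where the R\'{e}dei symbol reduces to an Artin symbol evaluated on an ideal and multiplicativity of the Artin symbol makes the claim immediate.

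First I would verify that $(a, bb', c)$ is admissible. Bilinearity of the Hilbert symbol at every place $v$ gives $(a, bb')_v = (a,b)_v (a,b')_v = 1$ and $(bb', c)_v = (b,c)_v(b',c)_v = 1$, so both $(a,bb')$ and $(bb',c)$ are acceptable. The condition that no prime $\equiv 3 \pmod 4$ divides $abb'c$ is inherited from the two given triples. For joint unramification, any prime ramifying in $\Q(\sqrt{bb'})$ must ramify in the composite $\Q(\sqrt{b},\sqrt{b'})$ and hence in at least one of $\Q(\sqrt{b})$ or $\Q(\sqrt{b'})$; combined with the joint unramification of $(a,b,c)$ and $(a,b',c)$, this forces $\gcd(\Delta_{\Q(\sqrt{a})}, \Delta_{\Q(\sqrt{bb'})}, \Delta_{\Q(\sqrt{c})}) = 1$.

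Next I would apply R\'{e}dei reciprocity (Theorem~\ref{thm:RR}) to rewrite
\[
[a,b,c]+[a,b',c] = [a,c,b] + [a,c,b'] \qquad \text{and} \qquad [a,bb',c] = [a,c,bb'],
\]
reducing the desired identity to $[a,c,b]+[a,c,b']=[a,c,bb']$. I would fix a single $L \in \mathcal{F}_{a,c}^{\text{unr}}$ (legitimate since the R\'{e}dei symbol is independent of the chosen representative) and integral ideals $\mathfrak{b}, \mathfrak{b}'$ of $\Q(\sqrt{ac})$ of norms $|b|$ and $|b'|$, so that $\mathfrak{b}\mathfrak{b}'$ has norm $|bb'|$. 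Multiplicativity of the Artin map $\Gal(L/\Q(\sqrt{ac})) \to \FF_2$ on ideals then yields
\[
\left[\frac{L/\Q(\sqrt{ac})}{\mathfrak{b}\mathfrak{b}'}\right] = \left[\frac{L/\Q(\sqrt{ac})}{\mathfrak{b}}\right] + \left[\frac{L/\Q(\sqrt{ac})}{\mathfrak{b}'}\right].
\]

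The one piece of bookkeeping I expect to be most delicate is the contribution from the archimedean place $\tilde{\infty}$, which enters precisely when the third slot of the symbol is negative. Running through the four sign combinations for $(b,b')$, the parity of occurrences of $\tilde{\infty}$ on each side matches in every case because Frobenius at $\tilde{\infty}$ has order dividing $2$: when both $b,b'$ are negative the two copies cancel, matching the positive sign of $bb'$; the mixed cases work out identically. This completes linearity in the third entry, and permutation invariance of the R\'{e}dei symbol (itself a consequence of R\'{e}dei reciprocity) delivers the analogous identity for each of the other entries.
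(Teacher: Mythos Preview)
Your proposal is correct and follows essentially the same approach as the paper: verify admissibility of $(a,bb',c)$ via bilinearity of Hilbert symbols and divisibility of discriminants, then use R\'{e}dei reciprocity to reduce to linearity in the third entry, which follows from multiplicativity of the Artin map. The paper is terser---it simply asserts ``linearity of the last entry'' without spelling out the ideal product or the sign bookkeeping at $\tilde{\infty}$---but your added detail is accurate and the argument is the same.
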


\begin{proof}
It follows from $(a,b)_v=(a,b')_v=1$ for all $v \in \Omega$ it follows and the bilinearity of Hilbert symbols, that $(a,bb')_v=1$ for all $v \in \Omega$. Therefore $(a,bb')$ is acceptable, and similarly $(bb',c)$. 
Since $(a,b,c)$ or $(a,b',c)$ are jointly ramified, we have 
\begin{multline*}
\gcd(\Delta_{\Q(\sqrt{a})}, \Delta_{\Q(\sqrt{b})}\Delta_{\Q(\sqrt{b'})}, \Delta_{\Q(\sqrt{c})})\\
=
\gcd(\Delta_{\Q(\sqrt{a})}, \Delta_{\Q(\sqrt{b})}, \Delta_{\Q(\sqrt{c})})
\gcd(\Delta_{\Q(\sqrt{a})}, \Delta_{\Q(\sqrt{b'})}, \Delta_{\Q(\sqrt{c})})
=1.
\end{multline*}
Observe that $\Delta_{\Q(\sqrt{bb'})}\mid \Delta_{\Q(\sqrt{b})}\Delta_{\Q(\sqrt{b'})}$. Therefore $(a,bb',c)$ is jointly unramified. 
It follows that $(a,bb',c)$ is an admissible triple. 

Now the desired identity follows from Theorem~\ref{thm:RR} and the linearity of the last entry. 
\end{proof}

We remark that it is possible to prove that the R\'{e}dei symbol is well-defined, and Proposition~\ref{prop:bilinear}, without using R\'{e}dei reciprocity. It is precisely this approach that works in the generality of \cite[Theorem~2.8]{Smith}. The resulting argument is substantially more involved, so for brevity we opted to use the proofs with R\'{e}dei reciprocity. Note that Theorem~\ref{t2.8self} and Theorem~\ref{t.2.8swapped} have no analogues in \cite{Smith}.

We need a final fact that will be crucial in the proof of Theorem~\ref{t2.8self}. 

\begin{prop} 
\label{prop:strick}
Let $(a,b,c)$ be an admissible triple such that $a,b>0$ and
\[
\mathrm{gcd}(\Delta_{\Q(\sqrt{a})}, \Delta_{\Q(\sqrt{b})}) = 1.
\]
Then $(a,b,-abc)$ is also admissible and 
\[
[a,b,c]=[a,b,-abc].
\]
\end{prop}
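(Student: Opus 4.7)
The plan is to reduce the identity to a special case via bilinearity of the R\'edei symbol and then invoke global Artin reciprocity. First, admissibility of $(a,b,-abc)$ (and of the auxiliary triple $(a,b,-ab)$ that will be needed in the reduction) follows from routine checks: acceptability of $(a,-abc)$ and $(b,-abc)$ is a consequence of bilinearity of the Hilbert symbol together with the identity $(x,x)_v=(x,-1)_v$; the product $a\cdot b\cdot(-abc)\equiv -c\pmod{(\Q^{*})^{2}}$ has the same odd prime factors as $c$, hence none are $\equiv 3\bmod 4$; and joint unramifiedness is immediate from $\gcd(\Delta_{\Q(\sqrt a)},\Delta_{\Q(\sqrt b)})=1$.

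Next, applying Proposition~\ref{prop:bilinear} in the third entry to the admissible triples $(a,b,c)$ and $(a,b,-ab)$ yields
\[
[a,b,c]+[a,b,-ab] \,=\, [a,b,c\cdot(-ab)] \,=\, [a,b,-abc]
\]
in $\FF_2$, so the proposition reduces to the single identity $[a,b,-ab]=0$. To establish this, set $K=\Q(\sqrt{ab})$; since $a,b>0$, the field $K$ is real. The key observation is that $\sqrt{ab}\in K^{*}$ has $\Norm_{K/\Q}(\sqrt{ab})=-ab$, so the principal ideal $(\sqrt{ab})\subset \OO_K$ has ideal norm $|-ab|=ab$ and may serve as the integral ideal of norm $-ab$ in the definition of $[a,b,-ab]$. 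Consequently, for any $L\in\F_{a,b}^{\text{unr}}$,
\[
[a,b,-ab] \,=\, \left[\frac{L/K}{(\sqrt{ab})\,\tilde\infty}\right].
\]

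The vanishing of this symbol follows from global Artin reciprocity: the Artin map from the id\`ele class group of $K$ to $\Gal(L/K)$ is trivial on the image of $K^{*}$, so applying it to the principal id\`ele attached to $\sqrt{ab}\in K^{*}$ gives $\prod_v\Art_v(\sqrt{ab})=1$. The finite part of this product computes the ideal-theoretic Artin symbol of $(\sqrt{ab})$ in $L/K$, while the archimedean part (governed by the opposite signs of $\sqrt{ab}$ at the two real places of $K$) encodes precisely the $\tilde\infty$-correction, so that the entire product reproduces $[a,b,-ab]$. The main obstacle is the careful local bookkeeping at ramified primes of $L/K$: the minimally ramified condition forces $L/\Q(\sqrt a,\sqrt b)$ to be unramified at all finite places, so the ramification of $L/K$ is carried entirely by the step $\Q(\sqrt a,\sqrt b)/K = K(\sqrt a)/K$; the hypothesis $\gcd(\Delta_{\Q(\sqrt a)},\Delta_{\Q(\sqrt b)})=1$ is precisely what controls the ramified local factors and ensures that they combine with the unramified Frobenius contributions to produce exactly the R\'edei symbol, closing the argument.
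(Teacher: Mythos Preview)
Your overall strategy matches the paper's: reduce via bilinearity (Proposition~\ref{prop:bilinear}) to showing $[a,b,-ab]=0$, identify the ideal of norm $|-ab|$ with the principal ideal $(\sqrt{ab})$ in $K=\Q(\sqrt{ab})$, and then invoke Artin reciprocity for the element $\sqrt{ab}$. The paper does exactly this.

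Where your argument goes astray is in the final paragraph. You write that ``the ramification of $L/K$ is carried entirely by the step $K(\sqrt a)/K$'' and then hand-wave about ``controlling the ramified local factors.'' In fact there are \emph{no} ramified finite primes in $L/K$ at all, and this is precisely what the hypothesis $\gcd(\Delta_{\Q(\sqrt a)},\Delta_{\Q(\sqrt b)})=1$ buys you. Indeed, minimal ramification with trivial gcd forces $L/\Q(\sqrt a,\sqrt b)$ to be unramified at every finite place; and if, say, $p\mid a$ (so $p\nmid b$), then the inertia at $p$ in $\Gal(L/\Q)\cong D_4$ is generated by an involution $\sigma$ with $\chi_a(\sigma)=1$, $\chi_b(\sigma)=0$, hence $\chi_{ab}(\sigma)=1$, so $\sigma\notin\Gal(L/K)$ and $p$ is unramified in $L/K$. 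The paper states the conclusion directly: ``Since $\gcd(\Delta_{\Q(\sqrt{a})},\Delta_{\Q(\sqrt{b})})=1$, it follows that $L/\Q(\sqrt{ab})$ is unramified at all finite places.''

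Once you know this, no ``local bookkeeping'' is needed: the Artin map for $L/K$ factors through $\CL^+(K)$, and $(\sqrt{ab})$ generates $\ker\bigl(\CL^+(K)\twoheadrightarrow\CL(K)\bigr)$, whose image under Artin is exactly the $\tilde\infty$-Frobenius. Hence $\bigl[\tfrac{L/K}{(\sqrt{ab})}\bigr]=\bigl[\tfrac{L/K}{\tilde\infty}\bigr]$ and $[a,b,-ab]=0$. Your idelic formulation is equivalent, but only works cleanly \emph{because} there are no ramified finite places---a fact your write-up obscures rather than exploits.
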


\begin{proof}
Assume that $a,b$ are both non-trivial, otherwise the statement is immediate.

We first show that $(a,b,-ab)$ is admissible. The condition of being jointly unramified follows immediately from the assumption that $\Delta_{\Q(\sqrt{a})}$ and $\Delta_{\Q(\sqrt{b})}$ are coprime. Since $a$ and $b$ are positive and are not divisible by any prime congruent to $3$ modulo $4$, it follows that $(a,-1)$ and $(b,-1)$ are both acceptable. This shows that $(a,b,-ab)$ is admissible.

We next claim that $[a,b,-ab]=0$. Let us now pick $L$ in $\mathcal{F}_{a,b}^{\text{unr}}$. Since $\gcd(\Delta_{\Q(\sqrt{a})},\Delta_{\Q(\sqrt{b})})=1$, it follows that $L/\Q(\sqrt{ab})$ is unramified at all finite places. Furthermore $L/\Q(\sqrt{ab})$ is a cyclic degree $4$ extension. On the other hand, the principal ideal $(\sqrt{ab})$ generates the kernel of the natural surjection $\text{Cl}^{+}(\Q(\sqrt{ab})) \twoheadrightarrow \text{Cl}(\Q(\sqrt{ab}))$. The extension $L/\Q(\sqrt{ab})$ is totally real if and only if this kernel acts trivially on $L$ via the Artin map. Therefore
\[
\left[\frac{L/\Q(\sqrt{ab})}{(\sqrt{ab})}\right] = \left[\frac{L/\Q(\sqrt{ab})}{\tilde{\infty}}\right].
\]
Hence $[a,b,-ab]=0$.

By Proposition~\ref{prop:bilinear}, we have that $(a,b,-abc)$ is also admissible and
\[
[a,b,c]=[a,b,c]+[a,b,-ab]=[a,b,-abc].
\qedhere
\]
\end{proof}

\subsection{Reflection principles} 
We begin by recalling the connection between R\'{e}dei symbols and $8$-rank pairings. 

Recall that $\CL^+(D)[2]$ is generated by the primes above the rational primes ramifying in $\Q(\sqrt{D})/\Q$. For each $b \mid \Delta_{\Q(\sqrt{D})}$, not necessarily positive, we define $\B_D(b)$ to be the unique integral ideal of $\mathcal{O}_{\Q(\sqrt{D})}$ having norm equal to $|b|$, if $b>0$. If $b<0$, we instead put $\B_D(b):=\B_D(|b|) \cdot (\sqrt{D})$. Recall that $\B_D(b) \in 2\CL^+(D)[4]$ if and only if $(b,D)$ forms an acceptable pair, i.e. $(b,D)_v=1$ for all $v \in \Omega$. 

Recall that $\CL^+(D)^{\vee}[2]$ is generated by $\chi_p$ with $p$ prime dividing $\Delta_{\Q(\sqrt{D})}$. Furthermore for a positive divisor $a \mid \Delta_{\Q(\sqrt{D})}$, we have that $\chi_{a} \in 2\CL^+(D)^{\vee}$ if and only if $(a,-D)$ is an acceptable pair, i.e. $(a,-D)_v=1$ for all $v \in \Omega$.

Since $D$ is not divisible by any primes congruent to $3\bmod 4$, we have for any positive $a\mid D$ we have $(a,D)_v=(a,-D)_v$. Therefore the same set of divisors of $\Delta_{\Q(\sqrt{D})}$ describes both $2\CL^+(D)[4]$ and $2\CL^+(D)^{\vee}[4]$, up to sign. In particular for any positive $a\mid D$, we have
\begin{equation}\label{eq:samespace}
\chi_{a} \in 2\CL^+(D)^{\vee} 
\quad \text{ if and only if }\quad 
\B_D(a) \in 2\CL^+(D)[4]
\end{equation}
 
Now let $a,b\mid \Delta_{\Q(\sqrt{D})}$ such that $\chi_a \in 2\CL^+(D)^{\vee}[4]$ and $\B_D(b) \in 2\CL^+(D)[4]$. Then for all cyclic degree $4$ extensions $L/\Q(\sqrt{D})$ unramified at all finite places and containing $\Q(\sqrt{a},\sqrt{D})$, the Artin symbol $[\frac{L/\Q(\sqrt{D})}{\B_D(b)}]$ always lands in the unique cyclic subgroup of order $2$ of $\Gal(L/\Q(\sqrt{D}))$, since $\B_D(b)\in\CL^+(D)[2]$. Furthermore, for a fixed $a$, the value of the symbol does not depend on the choice of $L$, since $\B_D(b)\in 2\CL^+(D)[4]$. In this statement we are implicitly identifying in the unique possible way any two groups of size $2$. The value of this symbol is by definition
\[
\langle \chi_a, b \rangle_{D}
\]
and we shall refer to it as the \emph{Artin pairing} between $\chi_a$ and $b$. 

We next define spaces $\mathcal{C}(D)$ and $\mathcal{C}^{\vee}(D)$ as follows. We define $\mathcal{C}^{\vee}(D)$ to be the subgroup of $\Q^{*}/(\{1,D\}(\Q^{*})^2)$ given by
\[
\mathcal{C}^{\vee}(D):=\left\{a \mid \Delta_{\Q(\sqrt{D})}: a>0\right\},
\]
and we define $\mathcal{C}(D)$ to be the subgroup of $\Q^{*}/(\{1,-D\}(\Q^{*})^2)$ given by
\[
\mathcal{C}(D):=\left\{b \mid \Delta_{\Q(\sqrt{D})}\right\}.
\] 
We put $2\mathcal{C}^{\vee}(D)$ the preimage of $2\CL^+(D)$ in $\mathcal{C}^{\vee}(D)$ and similarly for $2\mathcal{C}(D)$. Hence we have defined a pairing $2\mathcal{C}^{\vee}(D) \times 2\mathcal{C}(D) \to \mathbb{F}_2$ via the assignment $(a,b) \mapsto \langle \chi_a, b \rangle_{D}$. The fundamental property of this pairing, which can be verified easily, is that the left (resp.\ the right) kernel of this pairing is the preimage of $4\CL^+(D)^{\vee}$ (resp. of $4\CL^+(D)$) in $\mathcal{C}^{\vee}(D)$ (resp.\ in $\mathcal{C}(D)$). Another crucial feature of the pairing is that it can be computed using R\'{e}dei symbols. 
 
\begin{prop} 
\label{prop:Redei8rk}
Let $(a,b)$ be a pair with $a,b \in \Q^{*}/(\Q^{*})^2$ and such that $\Delta_{\Q(\sqrt{a})},\Delta_{\Q(\sqrt{b})}$ are coprime. Furthermore assume that $a,b>0$ and not divisible by any prime congruent to $3$ modulo $4$. Let $c$ be a (not necessarily positive) divisor of $\Delta_{\Q(\sqrt{ab})}$. Assume that $\chi_{a} \in 2\CL^+(ab)^{\vee}[4]$ and  $\B_{ab}(c) \in 2\CL^+(ab)[4]$. Then the triple $(a,b,c)$ is admissible and we have that
\[
\langle \chi_a,c \rangle_{ab}=[a,b,c].
\]
\end{prop}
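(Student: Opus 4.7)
The overall strategy is to realize both sides as the \emph{same} Artin symbol on $L/\Q(\sqrt{ab})$ for an appropriate cyclic degree-$4$ extension $L$. The key observation is that since $\gcd(\Delta_{\Q(\sqrt{a})},\Delta_{\Q(\sqrt{b})})=1$, a short local argument shows that $\Q(\sqrt{a},\sqrt{b})/\Q(\sqrt{ab})$ is unramified at every finite place (at a prime $\mathfrak{p}$ above $p\mid a$, the element $a$ has even $\mathfrak{p}$-adic valuation, so adjoining $\sqrt{a}$ is locally equivalent to adjoining the square root of a unit). Combined with the minimally ramified condition, any $L\in\F_{a,b}^{\mathrm{unr}}$ therefore defines a cyclic degree-$4$ extension of $\Q(\sqrt{ab})$ that is unramified at all finite places and contains $\Q(\sqrt{a},\sqrt{b})$ -- exactly the type of extension used to compute $\langle \chi_a, c \rangle_{ab}$.

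The first step is to verify admissibility of $(a,b,c)$. Joint unramification is immediate from $\gcd(\Delta_{\Q(\sqrt{a})},\Delta_{\Q(\sqrt{b})})=1$, and the absence of primes $\equiv 3 \pmod 4$ in $abc$ follows since $c \mid \Delta_{\Q(\sqrt{ab})}$ forces every prime of $c$ to lie in $\{2\} \cup \{p : p \mid ab\}$. Acceptability of $(a,b)$ reduces to $(a,b)_v=1$ for all $v$: from $\chi_a \in 2\CL^+(ab)^{\vee}$ we have $(a,-ab)_v=1$, and the identity $(a,-ab)_v=(a,b)_v$ (obtained from bimultiplicativity and $(a,a)_v=(a,-1)_v$) finishes the argument. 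For $(a,c)$ and $(b,c)$, the hypothesis $\B_{ab}(c) \in 2\CL^+(ab)$ gives $(ab,c)_v=1$, hence $(a,c)_v = (b,c)_v$, and it suffices to show $(a,c)_v = 1$ at each place. The substantive case is an odd prime $p \mid ab$: coprimality of discriminants implies $p$ divides exactly one of $a,b$, say $p \mid a$, hence $p \equiv 1 \pmod 4$; the tame Hilbert-symbol formula gives $(b,c)_p = \left(\frac{b}{p}\right)^{v_p(c)}$, and from $(a,b)_p = \left(\frac{b}{p}\right) = 1$ this is trivial, so $(a,c)_p = (b,c)_p = 1$. The symmetric case $p\mid b$, the archimedean place (using $a,b>0$), and the dyadic place are handled by similar local computations.

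Now fix any $L \in \F_{a,b}^{\mathrm{unr}}$, which is a valid choice for both pairings. When $c>0$, we have $\B_{ab}(c) = \mathfrak{c}$ -- both are the product of the unique ramified primes of $\Q(\sqrt{ab})$ above the rational primes dividing $c$ -- so the two Artin symbols coincide. When $c<0$, by definition $\B_{ab}(c) = \mathfrak{c} \cdot (\sqrt{ab})$, so the Artin symbol splits as
\[
\left[\frac{L/\Q(\sqrt{ab})}{\B_{ab}(c)}\right] = \left[\frac{L/\Q(\sqrt{ab})}{\mathfrak{c}}\right] + \left[\frac{L/\Q(\sqrt{ab})}{(\sqrt{ab})}\right].
\]
The second summand equals $\left[\frac{L/\Q(\sqrt{ab})}{\tilde{\infty}}\right]$ by the computation already given in the proof of Proposition~\ref{prop:strick}: $L/\Q(\sqrt{ab})$ is unramified at finite places, $(\sqrt{ab})$ generates the kernel of $\CL^+(\Q(\sqrt{ab}))\twoheadrightarrow \CL(\Q(\sqrt{ab}))$, and $L/\Q(\sqrt{ab})$ is totally real precisely when this kernel acts trivially under the Artin map. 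Thus the total matches $[a,b,c]$ for $c<0$ as well. The main obstacle is the admissibility verification: the hypotheses directly give only $(ab,c)_v=1$, and decoupling this into the individual pairings $(a,c)$ and $(b,c)$ genuinely requires both the coprimality $\gcd(\Delta_{\Q(\sqrt{a})},\Delta_{\Q(\sqrt{b})})=1$ and the quadratic-residue consequence extracted from $\chi_a\in 2\CL^+(ab)^{\vee}$, via a careful place-by-place analysis.
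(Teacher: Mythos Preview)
Your proof is correct and follows essentially the same route as the paper's: verify admissibility of $(a,b,c)$ via place-by-place Hilbert-symbol computations, then identify both pairings as the Artin symbol on a common $L\in\F_{a,b}^{\mathrm{unr}}$, using that $\gcd(\Delta_{\Q(\sqrt{a})},\Delta_{\Q(\sqrt{b})})=1$ forces $L/\Q(\sqrt{ab})$ to be unramified at all finite places. The only differences are organizational: you reduce the acceptability of $(a,c)$ to that of $(b,c)$ via $(ab,c)_v=1$ (the paper instead splits on which of $a,c$ are divisible by $v$), and you spell out the $c<0$ case explicitly via the $(\sqrt{ab})\leftrightarrow\tilde{\infty}$ identification (which the paper leaves implicit); conversely, the paper handles $v=2$ cleanly by Hilbert reciprocity rather than by an unspecified local computation.
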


\begin{proof}
Observe that $(a,b)$ and $(ab,c)$ are acceptable since $\chi_{a} \in 2\CL^+(ab)^{\vee}[4]$ and $\B_{ab}(c) \in 2\CL^+(ab)[4]$. 

We claim that $(a,c)$ is acceptable. A similar argument shows that $(b,c)$ is acceptable. Firstly $a>0$ implies $(a,c)_{\infty}=1$.
Now we check that $(a,c)_{v}=1$ for all $v \in \Omega$ finite and odd.
If $v\nmid ac$, trivially we have $(a,c)_v=1$.
If $v$ divides only $a$ but not $c$, we have that $(a,c)_v=(ab,c)_v=1$. 
If $v$ divides only $c$ but not $a$, we have that $(a,c)_v=(a,ab)_v=1$. 
Now assume that $v$ divides both $a$ and $c$.
Since $\Delta_{\Q(\sqrt{a})},\Delta_{\Q(\sqrt{b})}$ are coprime, we must have $v\nmid b$. Also by assumption $v^2$ cannot divide $a$ or $c$, so $(b,ac)_v=1$.
Therefore 
$(a,c)_v=(a,ac)_v=(ab,ac)_v$.
Since $(a,b)$ and $(ab,c)$ are acceptable, we have $(a,ab)_v=(a,b)_v=(ab,c)_v=1$, so $(ab,ac)_v=1$, as required.
The remaining case $v=2$ follows from Hilbert reciprocity. This shows that $(a,c)$ and similarly $(b,c)$ are acceptable pairs. 

Since $a,b$ are coprime, and not divisible by any prime congruent to $3\bmod 4$, we conclude that $\gcd(\Delta_{\Q(\sqrt{a})}, \Delta_{\Q(\sqrt{b})}, \Delta_{\Q(\sqrt{c})})=1$. Therefore the triple $(a,b,c)$ is admissible. 

Now observe that any $L \in \mathcal{F}_{a,b}^{\text{unr}}$ gives a cyclic degree $4$ extension of $\Q(\sqrt{ab})$ that is unramified at all finite places and contains $\Q(\sqrt{a},\sqrt{b})$. Therefore $\langle \chi_a, c \rangle_{ab}=[\frac{L/\Q(\sqrt{ab})}{\B_{ab}(c)}]=[a,b,c]$.
\end{proof}

We are now ready to prove our main algebraic results. 

\begin{theorem}
\label{t2.8}
Let $d$ be a positive squarefree integer composed of primes that are $1$ or $2$ modulo $4$. Let $p_1, p_2, q_1, q_2$ be primes that are $1$ modulo 4 and coprime to $d$. Let $a$ be a positive divisor of $d$, and let $b$ be any (possibly negative) divisor of $d$. Assume that
\[
\B_{p_iq_jd}(b) \in 2\CL^+(p_iq_jd)[4] \text{ for all } i, j \in \{1, 2\}.
\]
\begin{enumerate}
\item Suppose
\[
\chi_a \in 2\CL^+(p_iq_jd)^\vee[4] \text{ for all } (i, j) \in \{(1, 2), (2, 1), (2, 2)\}
.\]
Then we have $\chi_a \in 2\CL^+(p_1q_1d)^\vee[4]$ and 
\begin{equation}\label{eq:t2.8i}
\langle \chi_a, b \rangle_{p_1q_1d} + \langle \chi_a, b \rangle_{p_1q_2d} + \langle \chi_a, b \rangle_{p_2q_1d} + \langle \chi_a, b \rangle_{p_2q_2d} = 0.
\end{equation}
\item Suppose instead 
\[
\chi_{p_ia} \in 2\CL^+(p_iq_jd)^\vee[4] \text{ for all } (i, j) \in \{(1, 2), (2, 1), (2, 2)\},
\]
and 
\[
\left(\frac{q_1q_2}{p_1}\right)=\left(\frac{p_1p_2}{q_1}\right)=1.
\]
Then $\chi_{p_1a} \in 2\CL^+(p_1q_1d)^\vee[4]$. Furthermore the triple $(p_1p_2,q_1q_2,b)$ is admissible and
\begin{equation}\label{eq:t2.8ii}
\langle \chi_{p_1a}, b \rangle_{p_1q_1d} + \langle \chi_{p_1a}, b \rangle_{p_1q_2d} + \langle \chi_{p_2a}, b \rangle_{p_2q_1d} + \langle \chi_{p_2a}, b \rangle_{p_2q_2d} = [p_1p_2, q_1q_2, b].
\end{equation}
\end{enumerate}
\end{theorem}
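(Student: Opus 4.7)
The plan is to reduce each Artin pairing on the left-hand sides of \eqref{eq:t2.8i} and \eqref{eq:t2.8ii} to a R\'edei symbol via Proposition~\ref{prop:Redei8rk}, and then exploit the bilinearity in Proposition~\ref{prop:bilinear} to collapse the four terms. Write $e := d/a$, a positive squarefree divisor of $d$ coprime to $a$. In part (i), the decomposition $p_iq_jd = a \cdot (p_iq_je)$, together with coprimality of $a$ and $p_iq_je$, positivity, the congruence conditions on the primes involved, and the given hypotheses $\chi_a \in 2\CL^+(p_iq_jd)^\vee[4]$ and $\B_{p_iq_jd}(b) \in 2\CL^+(p_iq_jd)[4]$, verify all the assumptions of Proposition~\ref{prop:Redei8rk} and yield $\langle \chi_a, b\rangle_{p_iq_jd} = [a, p_iq_je, b]$. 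In part (ii), the analogous factorisation $p_iq_jd = (p_ia)\cdot(q_je)$ gives $\langle \chi_{p_ia}, b\rangle_{p_iq_jd} = [p_ia, q_je, b]$ by the same mechanism.

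Once the Artin pairings are written as R\'edei symbols, the identities drop out from repeated application of Proposition~\ref{prop:bilinear}. In part (i), combining the pairs sharing a common $p_i$ gives $[a, p_iq_1e, b]+[a, p_iq_2e, b] = [a, p_i^2q_1q_2e^2, b] = [a, q_1q_2, b]$ for each $i \in \{1,2\}$, so the sum of the four symbols is $2[a, q_1q_2, b] = 0$ in $\mathbb{F}_2$. In part (ii), combining first in the middle entry yields $[p_ia, q_1q_2, b]$ for each $i \in \{1,2\}$, and a further combination in the first entry produces $[p_1p_2 a^2, q_1q_2, b] = [p_1p_2, q_1q_2, b]$, matching the right-hand side. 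The admissibility of $(p_1p_2, q_1q_2, b)$ asserted in the statement is delivered automatically by the admissibility-preserving nature of Proposition~\ref{prop:bilinear}.

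It remains to deduce that $\chi_a \in 2\CL^+(p_1q_1d)^\vee[4]$, respectively $\chi_{p_1a} \in 2\CL^+(p_1q_1d)^\vee[4]$, from the three other memberships. Translated into Hilbert symbols, this is the acceptability of $(a, -p_1q_1d)$, respectively $(p_1a, -p_1q_1d)$, and a direct bilinearity computation gives, at every place $v$,
\[
\prod_{i,j \in \{1,2\}} (a, -p_iq_jd)_v = 1, \qquad \prod_{i,j \in \{1,2\}} (p_ia, -p_iq_jd)_v = (p_1p_2, q_1q_2)_v.
\]
For part (i) acceptability of the fourth pair is thus forced, while for part (ii) it amounts to $(p_1p_2, q_1q_2)$ being acceptable; the two Legendre hypotheses $(q_1q_2/p_1) = (p_1p_2/q_1) = 1$ supply acceptability at the odd primes dividing $p_1p_2q_1q_2$, and the remaining places $v = 2$ and $v = \infty$ are handled by Hilbert reciprocity together with the congruence conditions $p_i, q_j \equiv 1 \bmod 4$.

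The main obstacle I anticipate lies not in any single step but in the careful bookkeeping of acceptability, particularly at the wild place $v=2$, which is never made explicit by the numerical hypotheses of the theorem and must be extracted from congruence conditions in combination with Hilbert reciprocity. Once this verification has been absorbed into checking the hypotheses of Proposition~\ref{prop:Redei8rk}, both identities become formal consequences of the algebraic machinery already developed in Section~\ref{sAlg}.
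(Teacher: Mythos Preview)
Your proof follows the same route as the paper's: first show the missing membership via a Hilbert-symbol product, then convert all four Artin pairings to R\'edei symbols by Proposition~\ref{prop:Redei8rk}, and collapse via Proposition~\ref{prop:bilinear}. The structure and all the symbol manipulations are correct.

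There is one small gap in part~(ii). You assert that the two Legendre hypotheses $\left(\frac{q_1q_2}{p_1}\right)=\left(\frac{p_1p_2}{q_1}\right)=1$ supply $(p_1p_2,q_1q_2)_v=1$ at \emph{all} odd primes dividing $p_1p_2q_1q_2$, but they directly handle only $v=p_1$ and $v=q_1$. At $v=p_2$ and $v=q_2$ you still need an argument, and Hilbert reciprocity alone only gives $(p_1p_2,q_1q_2)_{p_2}(p_1p_2,q_1q_2)_{q_2}=1$, not each factor separately. The paper sidesteps this by observing that the quantity you actually want, $(p_1a,q_1\frac{d}{a})_v$, is trivially $1$ at $v=p_2,q_2$ since $p_2,q_2\nmid p_1q_1d$; equivalently, you can extract $\left(\frac{q_1q_2}{p_2}\right)=1$ and $\left(\frac{p_1p_2}{q_2}\right)=1$ from the three membership hypotheses $\chi_{p_ia}\in 2\CL^+(p_iq_jd)^\vee[4]$ you already have. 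Either patch is one line; with it in place your argument is complete and coincides with the paper's.
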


\begin{proof}
$(i)$ Recall that $\chi_{a} \in 2\CL^+(p_1q_1d)^{\vee}[4]$ is equivalent to $(a,p_1q_1\frac{d}{a})$ being an acceptable pair. For each $v \in \Omega$ we already know that $(a,p_1q_2\frac{d}{a})_v=(a,p_2q_1\frac{d}{a})_v=(a,p_2q_2\frac{d}{a})_v=1$. Therefore taking their product, we obtain by the bilinearity of Hilbert symbols that $(a,p_1q_1\frac{d}{a})_v=1$ for each $v \in \Omega$, so $\chi_{a} \in 2\CL^+(p_1q_1d)^{\vee}[4]$ as desired. 

Now by Proposition~\ref{prop:Redei8rk}, we obtain that the four triples $(a,p_1q_1 \frac{d}{a},b)$, $(a,p_1q_2\frac{d}{a},b)$, $(a,p_2q_1\frac{d}{a},b)$ and $(a,p_2q_2\frac{d}{a},b)$ are all admissible, and the left-hand side of \eqref{eq:t2.8i} equals
\[[a,p_1q_1 \frac{d}{a},b]+[a,p_1q_2\frac{d}{a},b]+[a,p_2q_1\frac{d}{a},b]+[a,p_2q_2\frac{d}{a},b].
\]
By Proposition~\ref{prop:bilinear}, this sum equals
\[[a,q_1q_2,b]+[a,q_1q_2,b]=0.
\]

$(ii)$ Recall that $\chi_{p_1a} \in 2\CL^+(p_1q_1d)^{\vee}[4]$ is equivalent to $(p_1a,q_1\frac{d}{a})$ being an acceptable pair. 
For each $v \in \Omega$ we already know that $(p_1a,q_2\frac{d}{a})_v=(p_2a,q_1\frac{d}{a})_v=(p_2a,q_2\frac{d}{a})_v=1$. Therefore taking their product, we obtain by the bilinearity of Hilbert symbols that $(p_1a,q_1\frac{d}{a})_v=(p_1p_2,q_1q_2)_v$ for each $v \in \Omega$.
The symbol $(p_1p_2,q_1q_2)_v$ is trivial at all odd primes $v\nmid p_1p_2q_1q_2$, and also at $2$ and $\infty$ since $p_1,p_2,q_1,q_2$ are positive and congruent to $1$ modulo $4$.
Also note that $(p_1a,q_1\frac{d}{a})_v=1$ for $v= p_2, q_2$, since $p_2,q_2\nmid p_1q_1d$.
Therefore it remains to check $v=p_1$ and $q_1$.
At $v=p_1$, this becomes $(p_1,q_1q_2)_{p_1}=\leg{q_1q_2}{p_1}$ which is trivial by assumption. The case $v=q_1$ is similar.
This shows that $(p_1a,q_1\frac{d}{a})_v=(p_1p_2,q_1q_2)_v=1$ for each $v \in \Omega$, and $\chi_{p_1a} \in 2\CL^+(p_1q_1d)^{\vee}[4]$.

Next by Proposition~\ref{prop:Redei8rk} we know that the triples $(p_1a,q_1 \frac{d}{a},b),(p_1a,q_2\frac{d}{a},b),(p_2a,q_1\frac{d}{a},b)$ and $(p_2a,q_2\frac{d}{a},b)$ are all admissible, and that the left-hand side of \eqref{eq:t2.8ii} equals
\[
[p_1a,q_1 \frac{d}{a},b]+[p_1a,q_2\frac{d}{a},b]+[p_2a,q_1\frac{d}{a},b]+[p_2a,q_2\frac{d}{a},b].
\]
Applying Proposition~\ref{prop:bilinear}, we find that $(p_1a,q_1q_2,b)$ and $(p_2a,q_1q_2,b)$ are also admissible and this sum equals
\[
[p_1a,q_1q_2,b]+[p_2a,q_1q_2,b].
\]
Another application of Proposition~\ref{prop:bilinear}, shows that $(p_1p_2,q_1q_2,b)$ is admissible and the above sum is 
\[[p_1p_2,q_1q_2,b].
\qedhere
\]
\end{proof}

\begin{theorem}
\label{t2.8self}
Let $d$ be a positive squarefree integer composed of primes that are $1$ or $2$ modulo $4$. Take primes $p_1, p_2, q_1, q_2$ that are $1$ modulo 4 and coprime to $d$. Let $a$ be a positive divisor of $d$. We assume that
\[
\B_{p_iq_jd}(p_ia) \in 2\CL^+(p_iq_jd)[4] \text{ for all } i, j \in \{1, 2\}.
\]
Then we have 
\[
\chi_{p_ia} \in 2\CL^+(p_iq_jd)^\vee[4]  \text{ for all } i, j \in \{1, 2\}.
\]
Moreover, the triple $(p_1p_2,q_1q_2,p_1p_2)$ is admissible and
\begin{equation}\label{eq:t2.8self}
\langle \chi_{p_1a}, p_1a \rangle_{p_1q_1d} + \langle \chi_{p_1a}, p_1a \rangle_{p_1q_2d} + \langle \chi_{p_2a}, p_2a \rangle_{p_2q_1d} + \langle \chi_{p_2a}, p_2a \rangle_{p_2q_2d} = [p_1p_2, q_1q_2, p_1p_2].
\end{equation}
\end{theorem}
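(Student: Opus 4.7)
The plan is to reduce the four Artin pairings on the left-hand side to R\'{e}dei symbols via Proposition~\ref{prop:Redei8rk}, and then manipulate the resulting sum using the bilinearity of Proposition~\ref{prop:bilinear} together with the reflection identity of Proposition~\ref{prop:strick}. The first claim is immediate: for each $(i,j)$, the integer $p_ia$ is a positive divisor of $\Delta_{\Q(\sqrt{p_iq_jd})}$, so equation~\eqref{eq:samespace} converts the hypothesis $\B_{p_iq_jd}(p_ia)\in 2\CL^+(p_iq_jd)[4]$ into $\chi_{p_ia}\in 2\CL^+(p_iq_jd)^{\vee}[4]$.

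Since $p_i,q_j\equiv 1\bmod 4$ and the odd prime factors of $d$ are $\equiv 1\bmod 4$, the integers $p_ia$ and $q_jd/a$ are positive, coprime, not divisible by any prime $\equiv 3\bmod 4$, and have coprime quadratic discriminants. Proposition~\ref{prop:Redei8rk} therefore applies and gives
\[
\langle \chi_{p_ia}, p_ia\rangle_{p_iq_jd}=[p_ia,\ q_jd/a,\ p_ia],
\]
and shows that each of these triples is admissible. I would then proceed in three stages. First, fix $i$ and sum over $j$: bilinearity of the R\'{e}dei symbol in the middle entry gives
\[
[p_ia, q_1d/a, p_ia]+[p_ia, q_2d/a, p_ia]=[p_ia, q_1q_2(d/a)^2, p_ia]=[p_ia, q_1q_2, p_ia],
\]
together with the admissibility of $(p_ia, q_1q_2, p_ia)$. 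Next, apply Proposition~\ref{prop:strick} to this admissible triple (using $p_ia,q_1q_2>0$ and coprime discriminants) to move $p_ia$ out of the third slot:
\[
[p_ia, q_1q_2, p_ia]=[p_ia, q_1q_2, -(p_ia)^2 q_1q_2]=[p_ia, q_1q_2, -q_1q_2].
\]
Now sum over $i$ and use bilinearity in the first slot: both $(p_1a, q_1q_2, -q_1q_2)$ and $(p_2a, q_1q_2, -q_1q_2)$ are admissible, so
\[
[p_1a, q_1q_2, -q_1q_2]+[p_2a, q_1q_2, -q_1q_2]=[p_1p_2a^2, q_1q_2, -q_1q_2]=[p_1p_2, q_1q_2, -q_1q_2],
\]
and $(p_1p_2, q_1q_2, -q_1q_2)$ is admissible. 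Finally, apply Proposition~\ref{prop:strick} once more to this triple: since $-p_1p_2\cdot q_1q_2\cdot(-q_1q_2)\equiv p_1p_2$ modulo squares, we conclude both that $(p_1p_2, q_1q_2, p_1p_2)$ is admissible and that $[p_1p_2, q_1q_2, -q_1q_2]=[p_1p_2, q_1q_2, p_1p_2]$. Chaining the identities proves~\eqref{eq:t2.8self}.

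The main obstacle, and the reason Proposition~\ref{prop:strick} must be invoked, is that the naive attempt to reduce $[p_ia, q_1q_2, p_ia]$ by breaking $p_ia=p_i\cdot a$ via bilinearity directly in the first or third entry requires triples such as $(p_i, q_1q_2, p_ia)$ to be admissible, which in turn demands that $(p_i, q_1q_2)$ be an acceptable pair, i.e.\ $\leg{q_1q_2}{p_i}=1$. Theorem~\ref{t2.8self} makes no such Legendre-symbol hypothesis, so this route is unavailable. The key manoeuvre is therefore to first transfer $p_ia$ out of the third slot using Proposition~\ref{prop:strick}, replacing it by $-q_1q_2$, so that the third entry no longer depends on $a$ or $i$; when bilinearity in the first slot is then applied, the factor $a^2$ collapses as a trivial square and the sum over $i$ assembles cleanly into $p_1p_2$, yielding the expected R\'{e}dei symbol on the right.
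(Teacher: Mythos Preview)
Your argument is correct and follows essentially the same route as the paper: convert each Artin pairing to a R\'{e}dei symbol via Proposition~\ref{prop:Redei8rk}, collapse the sum over $j$ by bilinearity in the middle slot to $[p_ia,q_1q_2,p_ia]$, apply Proposition~\ref{prop:strick} to replace the third entry by $-q_1q_2$, then collapse the sum over $i$ by bilinearity in the first slot and apply Proposition~\ref{prop:strick} once more. Your closing paragraph explaining why the naive bilinearity attempt fails without an extra Legendre-symbol hypothesis is a helpful remark not present in the paper, but the proof itself is the same.
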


\begin{proof}
By \eqref{eq:samespace}, the assumption $\B_{p_iq_jd}(p_ia) \in 2\CL^+(p_iq_jd)[4]$ implies that $\chi_{p_ia} \in 2\CL^+(p_iq_jd)^{\vee}[4]$ for each $i,j \in \{1,2\}$.

By Proposition~\ref{prop:Redei8rk}, we conclude that $(p_1a,q_1\frac{d}{a},p_1a)$, 
$(p_1a,q_2\frac{d}{a},p_1a)$,  
$(p_2a,q_1\frac{d}{a},p_2a)$ and 
$(p_2a,q_2\frac{d}{a},p_2a)$ are all admissible, and furthermore the left-hand side of \eqref{eq:t2.8self} is
\[
[p_1a,q_1\frac{d}{a},p_1a]+[p_1a,q_2\frac{d}{a},p_1a]+[p_2a,q_1\frac{d}{a},p_2a]+[p_2a,q_2\frac{d}{a},p_2a].
\]
Using Proposition~\ref{prop:bilinear}, we have that $(p_1a,q_1q_2,p_1a)$ and $(p_2a,q_1q_2,p_2a)$ are admissible triples and the sum becomes
\[
[p_1a,q_1q_2,p_1a]+[p_2a,q_1q_2,p_2a].
\]
Next, since $p_1a,q_1q_2$ are coprime and  $p_2a,q_1q_2$ are coprime, by Proposition~\ref{prop:strick}, $(p_1a,q_1q_2,-q_1q_2)$ and $(p_2a,q_1q_2,-q_1q_2)$ are admissible and the above sum is
\[
[p_1a,q_1q_2,-q_1q_2]+[p_2a,q_1q_2,-q_1q_2].
\]
By Proposition~\ref{prop:bilinear}, $(p_1p_2,q_1q_2,-q_1q_2)$ is admissible and
the above sum is 
\[[p_1p_2,q_1q_2,-q_1q_2]\]
Since $p_1p_2,q_1q_2$ are coprime, applying Proposition~\ref{prop:strick} again shows that $(p_1p_2,q_1q_2,p_1p_2)$ is admissible and
\[[p_1p_2,q_1q_2,-q_1q_2]=
[p_1p_2,q_1q_2,p_1p_2],
\]
which gives the desired result. 
\end{proof}

\begin{theorem} 
\label{t.2.8swapmin}
Let $d$ be a positive squarefree integer composed of primes that are $1$ or $2$ modulo $4$. Let $p_1, p_2, q_1, q_2$ be distinct primes that are $1$ modulo 4 and coprime to $d$. Let $a,b$ be a positive divisors of $d$. We assume that
\[
\B_{p_iq_jd}(b), \B_{p_iq_jd}(p_ia) \in 2\CL^+(p_iq_jd)[4] \text{ for all } i, j \in \{1, 2\}.
\]
Then we have that
\[
\chi_{b},\chi_{p_ia} \in 2\CL^+(p_iq_jd)^{\vee}[4] \text{ for all } i, j \in \{1, 2\}.
\]
Furthermore we have that
\[
\sum_{i,j \in \{1,2\}}\langle \chi_{p_ia}, b \rangle_{p_iq_jd}+\langle \chi_{b},p_ia \rangle_{p_iq_jd}=0.
\]
\end{theorem}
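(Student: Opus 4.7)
The assertion that $\chi_b,\chi_{p_ia}\in 2\CL^+(p_iq_jd)^{\vee}[4]$ for all $i,j$ is immediate from the hypotheses via~\eqref{eq:samespace}, since $b$ and $p_ia$ are positive divisors of $p_iq_jd$ and the latter is composed only of primes $\equiv 1$ or $2 \bmod 4$, so the two ``same space'' groups coincide.

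For the main identity, the plan is to convert each of the eight Artin pairings into a R\'edei symbol and then exploit linearity together with R\'edei reciprocity. Concretely, applying Proposition~\ref{prop:Redei8rk} to the factorization $p_iq_jd=(p_ia)\cdot(q_jd/a)$ yields
\[
\langle \chi_{p_ia},\,b\rangle_{p_iq_jd}=[p_ia,\,q_jd/a,\,b],
\]
while applying it to the factorization $p_iq_jd = b \cdot (p_iq_jd/b)$ gives
\[
\langle \chi_b,\,p_ia\rangle_{p_iq_jd}=[b,\,p_iq_jd/b,\,p_ia].
\]
The hypotheses of Proposition~\ref{prop:Redei8rk} are satisfied in both cases because $a,b\mid d$, the primes $p_i,q_j$ are pairwise distinct and do not divide $d$, forcing coprimality of the relevant discriminants; positivity and the absence of prime factors $\equiv 3\bmod 4$ are part of the set-up.

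Summing over $i,j$ and using linearity of the R\'edei symbol (Proposition~\ref{prop:bilinear}) first in the middle entry and then in the outer entry, together with $(q_1d/a)(q_2d/a)\equiv q_1q_2$ and $(p_1a)(p_2a)\equiv p_1p_2$ modulo $(\Q^*)^2$, I collapse the first sum to
\[
\sum_{i,j}[p_ia,\,q_jd/a,\,b]\;=\;\sum_{i}[p_ia,\,q_1q_2,\,b]\;=\;[p_1p_2,\,q_1q_2,\,b].
\]
An entirely analogous telescoping, this time collapsing the middle entry via $(p_iq_1d/b)(p_iq_2d/b)\equiv q_1q_2$ and the third entry via $(p_1a)(p_2a)\equiv p_1p_2$, gives
\[
\sum_{i,j}[b,\,p_iq_jd/b,\,p_ia]\;=\;[b,\,q_1q_2,\,p_1p_2].
\]
By R\'edei reciprocity (Theorem~\ref{thm:RR}), which yields full permutation invariance of the R\'edei symbol, the right-hand side equals $[p_1p_2,q_1q_2,b]$. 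Hence the grand total is $2[p_1p_2,q_1q_2,b]=0$ in $\FF_2$, which is the claimed identity.

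The main obstacle is the bookkeeping of admissibility: each application of Propositions~\ref{prop:bilinear} and~\ref{prop:Redei8rk} demands acceptability of every pair and joint unramifiedness of every triple that appears, including the intermediate triples $(p_ia,q_1q_2,b)$, $(p_1p_2,q_1q_2,b)$, $(b,q_1q_2,p_ia)$ and $(b,q_1q_2,p_1p_2)$. All of these reduce to the fact that the integers involved are positive, have only prime factors $\equiv 1$ or $2\bmod 4$, and share no common odd prime; acceptability at $2$ and $\infty$ is then forced by Hilbert reciprocity, exactly as in the proofs of Theorems~\ref{t2.8} and~\ref{t2.8self} above. Once these verifications are carried out, the chain of identities is a direct reading of the algebra established in Section~\ref{sAlg}.
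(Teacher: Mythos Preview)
Your proof is correct and follows essentially the same route as the paper's: convert all eight pairings to R\'edei symbols via Proposition~\ref{prop:Redei8rk}, collapse each four-term block to a single symbol using the bilinearity in Proposition~\ref{prop:bilinear}, and cancel the two resulting symbols by R\'edei reciprocity. One minor remark: your closing paragraph slightly overstates the bookkeeping burden, since admissibility of the intermediate triples $(p_ia,q_1q_2,b)$, $(p_1p_2,q_1q_2,b)$, etc.\ is delivered automatically by Proposition~\ref{prop:bilinear} once the eight base triples are admissible via Proposition~\ref{prop:Redei8rk}; no separate verification is needed.
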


\begin{proof}
By \eqref{eq:samespace}, the assumption $\B_{p_iq_jd}(b), \B_{p_iq_jd}(p_ia) \in 2\CL^+(p_iq_jd)[4]$ implies that $\chi_{b},\chi_{p_ia} \in 2\CL^+(p_iq_jd)^{\vee}[4]$ for all $i, j \in \{1, 2\}$. 

By Proposition~\ref{prop:Redei8rk}, the triples $(p_ia,\frac{d}{a}q_j,b)$ and $(b,\frac{d}{b}p_iq_j,ap_i)$ are admissible for all choices of $i,j$ in $\{1,2\}$. Furthermore the sum of the pairings in this proposition can be rewritten as
\[
\sum_{i,j\in\{1,2\}}[p_ia,\frac{d}{a}q_j,b]
+[b,\frac{d}{b}p_iq_j,ap_i]
\]
Applying Proposition~\ref{prop:bilinear} we can rewrite this as
\[[p_1a,q_1q_2,b]+[p_2a,q_1q_2,b]+[b,q_1q_2,ap_1]+[b,q_1q_2,ap_2]=[p_1p_2,q_1q_2,b]+[b,q_1q_2,p_1p_2]=0.
\]
The first equality follows from Proposition~\ref{prop:bilinear} and the last equality follows from applying Theorem~\ref{thm:RR}. 
\end{proof}

\begin{theorem} 
\label{t.2.8swapped}
Let $d$ be a positive squarefree integer composed of primes that are $1$ or $2$ modulo $4$. Let $p_1, p_2, q_1, q_2$ be distinct primes that are $1$ modulo 4 and coprime to $d$. Let $a,b$ be positive divisors of $d$. We assume that
\[
\B_{p_iq_jd}(q_jb), \B_{p_iq_jd}(p_ia) \in 2\CL^+(p_iq_jd)[4] \text{ for all } i, j \in \{1, 2\}.
\]
Then we have that
\[
\chi_{q_jb},\chi_{p_ia} \in 2\CL^+(p_iq_jd)^{\vee}[4] \text{ for all } i, j \in \{1, 2\}.
\]
Furthermore the triple $(p_1p_2,q_1q_2,-1)$ is admissible and
\begin{equation}\label{eq:t.2.8swapped}
\sum_{i, j \in \{1,2\}} \langle \chi_{p_ia}, q_jb \rangle_{p_iq_jd} + \langle \chi_{q_jb},p_ia \rangle_{p_iq_jd}= [p_1p_2, q_1q_2, -1].
\end{equation}
\end{theorem}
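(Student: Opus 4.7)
The plan is to first invoke \eqref{eq:samespace} to obtain $\chi_{q_jb},\chi_{p_ia}\in 2\CL^+(p_iq_jd)^\vee[4]$ from the hypotheses, and then to apply Proposition~\ref{prop:Redei8rk}, which rewrites each of the eight pairings appearing in \eqref{eq:t.2.8swapped} as a R\'edei symbol and simultaneously guarantees admissibility of the triples $(p_ia,q_j\frac{d}{a},q_jb)$ and $(q_jb,p_i\frac{d}{b},p_ia)$.

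For the ``unswapped'' sum $S_1=\sum_{i,j}[p_ia,q_j\frac{d}{a},q_jb]$, I will first collapse the $i$-dependence using Proposition~\ref{prop:bilinear}: for each fixed $j$ the two triples agree in their last two entries, so their sum equals $[p_1p_2,q_j\frac{d}{a},q_jb]$. Permuting via R\'edei reciprocity (Theorem~\ref{thm:RR}) to $[p_1p_2,q_jb,q_j\frac{d}{a}]$ and then applying Proposition~\ref{prop:strick} to the pair $(p_1p_2,q_jb)$---whose discriminants are coprime because $\Delta_{\Q(\sqrt{p_1p_2})}=p_1p_2$ is odd and supported outside the primes of $q_jb$---converts the symbol into $[p_1p_2,q_jb,-p_1p_2 b\frac{d}{a}]$, using that $-(p_1p_2)(q_jb)(q_j\frac{d}{a})\equiv -p_1p_2 b\frac{d}{a}$ modulo squares. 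At this point the $j$-dependence lives only in the middle slot, so a second application of Proposition~\ref{prop:bilinear} produces
\[
S_1=[p_1p_2,q_1q_2,-p_1p_2 b\tfrac{d}{a}].
\]
An exactly symmetric argument---summing over $j$ first, permuting, applying Proposition~\ref{prop:strick} to $(p_ia,q_1q_2)$, and summing over $i$ last---yields $S_2=[p_1p_2,q_1q_2,-a q_1q_2 \frac{d}{b}]$ for the ``swapped'' sum.

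Adding $S_1$ and $S_2$ by bilinearity in the third entry, and noting $d^2\equiv 1$ modulo squares, the product in the third slot collapses to $p_1p_2q_1q_2$, giving $S_1+S_2=[p_1p_2,q_1q_2,p_1p_2 q_1q_2]$. A final application of Proposition~\ref{prop:strick} with $(a,b,c)=(p_1p_2,q_1q_2,p_1p_2q_1q_2)$ then both establishes the admissibility of $(p_1p_2,q_1q_2,-1)$ claimed in the theorem and identifies $[p_1p_2,q_1q_2,p_1p_2 q_1q_2]=[p_1p_2,q_1q_2,-1]$ (since $-(p_1p_2)(q_1q_2)(p_1p_2q_1q_2)=-1$), which is exactly \eqref{eq:t.2.8swapped}.

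The main obstacle will be the bookkeeping: each invocation of Proposition~\ref{prop:bilinear} requires its two input triples to be admissible, and each invocation of Proposition~\ref{prop:strick} requires the first two entries to be positive with coprime discriminants. The positivity is clear from $a,b>0$ and $p_i,q_j\equiv 1\bmod 4$, and the coprimality is automatic throughout: $p_1p_2$ and $q_1q_2$ are products of distinct odd primes $\equiv 1\bmod 4$ and coprime to $d$, so their discriminants are exactly $p_1p_2$ and $q_1q_2$, sharing no prime with any discriminant of $\Q(\sqrt{p_ia})$, $\Q(\sqrt{q_jb})$, or the other quadratic fields that arise. Once this verification is carried out in each step, the R\'edei-symbol manipulations above go through without further difficulty.
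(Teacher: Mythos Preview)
Your proposal is correct and follows essentially the same approach as the paper: invoke \eqref{eq:samespace}, convert pairings to R\'edei symbols via Proposition~\ref{prop:Redei8rk}, collapse one index with Proposition~\ref{prop:bilinear}, apply Proposition~\ref{prop:strick} to strip the remaining index-dependence from the third slot, collapse the other index, add, and finish with a second application of Proposition~\ref{prop:strick}. The only difference is cosmetic---where you permute to apply Proposition~\ref{prop:strick} to the pair $(p_1p_2,q_jb)$, the paper applies it directly to $(p_1p_2,q_j\tfrac{d}{a})$---and the resulting third entries agree modulo squares.
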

\begin{proof}
By \eqref{eq:samespace}, the assumption $\B_{p_iq_jd}(q_jb), \B_{p_iq_jd}(p_ia) \in 2\CL^+(p_iq_jd)[4]\in 2\CL^+(p_iq_jd)[4]$ implies that $\chi_{p_ia},\chi_{q_jb} \in 2\CL^+(p_iq_jd)^{\vee}[4]$ for all $i, j \in \{1, 2\}$. 

By Proposition~\ref{prop:Redei8rk}, we have that the triples $(p_ia,\frac{d}{a}q_j,q_jb),(q_jb,\frac{d}{b}p_i,p_ia)$ are admissible for each choice of $i,j$ in $\{1,2\}$ and left-hand side of \eqref{eq:t.2.8swapped} equals
\[
\sum_{i,j \in \{1,2\}}[p_ia,\frac{d}{a}q_j,q_jb]+[q_jb,\frac{d}{b}p_i,p_ia].
\]
By Proposition~\ref{prop:bilinear} we can rewrite this sum of R\'{e}dei symbols as
\[
[p_1p_2,\frac{d}{a}q_1,bq_1]+[p_1p_2,\frac{d}{a}q_2,bq_2]+[q_1q_2,\frac{d}{b}p_1,ap_1]+[q_1q_2,\frac{d}{b}p_2,ap_2].
\]
One readily checks that $p_i\frac{d}{b}$ is coprime to $q_1q_2$ and that $q_j\frac{d}{a}$ is coprime to $p_1p_2$. Therefore we can apply Proposition~\ref{prop:strick} to each of the terms in the above sum
\[[p_1p_2,\frac{d}{a}q_1,-dabp_1p_2]+[p_1p_2,\frac{d}{a}q_2,-dabp_1p_2]+[q_1q_2,\frac{d}{b}p_1,-dabq_1q_2]+[q_1q_2,\frac{d}{b}p_2,-dabq_1q_2].
\]
Applying Proposition~\ref{prop:bilinear} we can further simplify this and get
\[
[p_1p_2,q_1q_2,-dabp_1p_2]+[p_1p_2,q_1q_2,-dabq_1q_2]=[p_1p_2,q_1q_2,p_1p_2q_1q_2].
\]
Since $p_1p_2$ and $q_1q_2$ are coprime, we can apply Proposition~\ref{prop:strick} and get that $(p_1p_2,q_1q_2,-1)$ is admissible and the above R\'{e}dei symbol equals 
\[
[p_1p_2,q_1q_2,-1],
\]
as required. 
\end{proof}

\section{A combinatorial result}
Let $X_1, \ldots, X_m$ be finite, non-empty sets and let $X := X_1 \times \ldots \times X_m$. Put
\[
V := \{F: X \rightarrow \FF_2\}, \quad W := \{g: X \times X \rightarrow \FF_2\}.
\]
Given two elements $x_1, x_2 \in X$ and $\mathbf{v} \in \{1, 2\}^m$, we define $\mathbf{v}(x_1, x_2)$ to be the unique element $y \in X$ such that $\pi_j(y) = \pi_j(x_{\pi_j(\mathbf{v})})$. We also define a linear map $d: V \rightarrow W$ given by
\[
dF(x_1, x_2) = \sum_{\mathbf{v} \in \{1, 2\}^m} F(\mathbf{v}(x_1, x_2)).
\]
We define $\mathcal{A}(X) := \im(d)$.

\begin{lemma}
\label{lIm}
We have that
\[
\dim_{\FF_2} \mathcal{A}(X) = \prod_{i = 1}^m \left(|X_i| - 1\right).
\]
\end{lemma}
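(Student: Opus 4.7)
The plan is to recognise $d$ as a tensor product of one-variable difference operators and then reduce the dimension count to a computation on each factor $X_i$ separately.

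For each $i \in \{1, \ldots, m\}$, set $V_i := \FF_2^{X_i}$ and $W_i := \FF_2^{X_i \times X_i}$, and define $d_i : V_i \to W_i$ by $(d_i f)(a, b) := f(a) + f(b)$. I would use the natural identifications $V \cong V_1 \otimes \cdots \otimes V_m$, under which a pure tensor $f_1 \otimes \cdots \otimes f_m$ corresponds to the function $y \mapsto \prod_i f_i(\pi_i(y))$, and similarly $W \cong W_1 \otimes \cdots \otimes W_m$. The first step is to verify that $d = d_1 \otimes \cdots \otimes d_m$. For a pure tensor $F = f_1 \otimes \cdots \otimes f_m$, the definition of $\mathbf{v}(x_1, x_2)$ and the distributive law for a sum over a Cartesian product give
\[
dF(x_1, x_2) = \sum_{\mathbf{v} \in \{1, 2\}^m} \prod_{j=1}^m f_j\bigl(\pi_j(x_{\pi_j(\mathbf{v})})\bigr) = \prod_{j=1}^m \bigl( f_j(\pi_j(x_1)) + f_j(\pi_j(x_2)) \bigr),
\]
which is exactly $\bigl((d_1 f_1) \otimes \cdots \otimes (d_m f_m)\bigr)(x_1, x_2)$. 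By $\FF_2$-linearity the identity extends to all of $V$.

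Next, I would invoke the standard fact that for finite-dimensional linear maps over a field, the image of a tensor product is the tensor product of the images: choosing complements $V_i = V_i' \oplus \ker(d_i)$, the map $d_1 \otimes \cdots \otimes d_m$ annihilates every summand of the induced decomposition of $V$ that contains some $\ker(d_i)$ factor, while it maps $V_1' \otimes \cdots \otimes V_m'$ isomorphically onto $\im(d_1) \otimes \cdots \otimes \im(d_m)$. Therefore
\[
\dim_{\FF_2} \mathcal{A}(X) = \prod_{i=1}^m \dim_{\FF_2} \im(d_i).
\]

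Finally, I would compute $\dim_{\FF_2} \im(d_i)$. The kernel of $d_i$ consists of functions $f : X_i \to \FF_2$ with $f(a) = f(b)$ for all $a, b \in X_i$, i.e.\ the constant functions; since $X_i$ is non-empty, $\dim_{\FF_2} \ker(d_i) = 1$, and rank–nullity yields $\dim_{\FF_2} \im(d_i) = |X_i| - 1$. Multiplying over $i$ gives the claimed formula. The only delicate point is the bookkeeping required to make the tensor-product identification precise; once $d$ has been recognised as the tensor product of the one-variable differences, each remaining step is routine.
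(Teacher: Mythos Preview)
Your argument is correct. The tensor-product decomposition $d = d_1 \otimes \cdots \otimes d_m$ is verified cleanly on pure tensors via the distributive expansion, the identity $\im(\bigotimes d_i) = \bigotimes \im(d_i)$ over a field is justified by your complement argument, and the one-variable computation $\dim \im(d_i) = |X_i| - 1$ is immediate from the kernel being the constants.

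As for comparison: the paper does not actually prove this lemma but simply refers the reader to Proposition~9.3 of Koymans--Pagano~\cite{KP}. So you have supplied a self-contained proof where the paper outsources the work. Your tensor-product viewpoint is the natural and efficient way to see the result; an alternative (and essentially equivalent) route is to compute $\dim \ker d$ directly by showing that $dF = 0$ forces $F$ to be a sum of functions each depending on fewer than $m$ coordinates, giving $\dim \ker d = |X| - \prod_i (|X_i|-1)$ by inclusion--exclusion. Either way, the content is the same.
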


\begin{proof}
See Proposition~9.3 in the work of Koymans and Pagano \cite{KP}.
\end{proof}

\begin{mydef}
Let $\epsilon > 0$ be given. We say that $F$ is $\epsilon$-bad if
\[
\left|F^{-1}(0) - \frac{|X|}{2}\right| \geq \epsilon |X|.
\]
We say that $g \in \mathcal{A}(X)$ is $\epsilon$-bad if there is $\epsilon$-bad $F$ such that $dF = g$.
\end{mydef}

In our application we shall be able to prove distributional properties of $g$ by using the Chebotarev Density Theorem. However, we have no direct control over $F$ itself. Nevertheless, the following theorem will allow us to prove the desired equidistribution for $F$. Note the similarity to Proposition~4.3 in Smith \cite{Smith}. Since we are dealing with the $8$-rank, we shall not need the more complicated Proposition~4.4 in Smith \cite{Smith}, and this allows us to save two logarithms.

\begin{theorem}
\label{tBadg}
Let $\epsilon > 0$ be given. Then we have
\[
\frac{|\{g \in \mathcal{A}(X) : g \textup{ is } \epsilon\textup{-bad}\}|}{|\mathcal{A}(X)|} \leq 2^{1 + |X| - \prod_{i = 1}^m \left(|X_i| - 1\right)} \cdot \exp(-2\epsilon^2|X|).
\]
\end{theorem}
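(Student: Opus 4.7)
The plan is to count $\epsilon$-bad $F\in V$ directly via Hoeffding's inequality and then to pass this bound to $\mathcal{A}(X)$ using the trivial observation, built into the definition, that every $\epsilon$-bad $g\in\mathcal{A}(X)$ admits at least one $\epsilon$-bad preimage $F$ under $d$. This sidesteps having to understand $d$ beyond knowing the dimension of its image, which is precisely what Lemma~\ref{lIm} gives us.

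First, I would treat a uniformly random $F:X\to\FF_2$ as $|X|$ independent Bernoulli$(1/2)$ trials indexed by $x\in X$, setting $Y_x=\mathbf{1}[F(x)=0]$ so that $|F^{-1}(0)|=\sum_{x\in X}Y_x$ has mean $|X|/2$. Hoeffding's inequality then yields
\[
\Pr\!\left[\left||F^{-1}(0)|-\tfrac{|X|}{2}\right|\geq\epsilon|X|\right]\leq 2\exp(-2\epsilon^2|X|),
\]
so that the number of $\epsilon$-bad $F\in V$ is at most $2\cdot 2^{|X|}\cdot\exp(-2\epsilon^2|X|)$.

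Second, I would use the definition of $\epsilon$-badness for $g$ to note that the set of $\epsilon$-bad $g$ is contained in the image under $d$ of the set of $\epsilon$-bad $F$, whence
\[
|\{g\in\mathcal{A}(X):g\text{ is }\epsilon\text{-bad}\}|\leq|\{F\in V:F\text{ is }\epsilon\text{-bad}\}|\leq 2\cdot 2^{|X|}\cdot\exp(-2\epsilon^2|X|).
\]
Dividing by $|\mathcal{A}(X)|=2^{\prod_i(|X_i|-1)}$, which is Lemma~\ref{lIm}, produces exactly the claimed bound $2^{1+|X|-\prod_{i=1}^m(|X_i|-1)}\exp(-2\epsilon^2|X|)$.

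There is no serious obstacle in this argument: the entire content is Hoeffding plus the cheap surjection onto $\mathcal{A}(X)$. The only thing one must be careful about is that we are \emph{not} averaging $F$ within a single fibre of $d$ (which would be a harder question, and is exactly what the more elaborate Proposition~4.4 of \cite{Smith} deals with for higher ranks); here, thanks to working with the $8$-rank, it suffices to count $\epsilon$-bad $F$ globally and pay the factor $|\ker(d)|=2^{|X|-\prod_i(|X_i|-1)}$ when pushing down to $\mathcal{A}(X)$, which is exactly the factor appearing in the stated estimate. This loss is the reason for the remark in the text that $8$-rank saves two logarithms compared to the general setting.
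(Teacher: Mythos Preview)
Your proposal is correct and follows essentially the same approach as the paper: Hoeffding's inequality bounds the number of $\epsilon$-bad $F\in V$, the observation that every $\epsilon$-bad $g$ has an $\epsilon$-bad preimage gives $|\{g\ \epsilon\text{-bad}\}|\leq|\{F\ \epsilon\text{-bad}\}|$, and dividing by $|\mathcal{A}(X)|=2^{\prod_i(|X_i|-1)}$ from Lemma~\ref{lIm} finishes. Your commentary about why this suffices for the $8$-rank (versus the harder fibre-wise analysis in \cite[Proposition~4.4]{Smith}) is also consonant with the paper's own remarks preceding the theorem.
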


\begin{proof}
Hoeffding's inequality shows that the proportion of $F$ that are $\epsilon$-bad is at most
\begin{align}
\label{eHoeffF}
\frac{|\{F \in V : F \textup{ is } \epsilon\textup{-bad}\}|}{|V|} \leq 2\exp(-2\epsilon^2|X|).
\end{align}
Define
\[
a := |X| - \prod_{i = 1}^m \left(|X_i| - 1\right).
\]
By Lemma~\ref{lIm} we see that the kernel of $d$ is an $a$-dimensional vector space. Combining this with equation~\eqref{eHoeffF}, we infer that
\[
\frac{|\{g \in \mathcal{A}(X) : g \textup{ is } \epsilon\textup{-bad}\}|}{|\mathcal{A}(X)|} \leq \frac{|\{F \in V : F \textup{ is } \epsilon\textup{-bad}\}|}{|\mathcal{A}(X)|} \leq 2^{a + 1} \cdot \exp(-2\epsilon^2|X|),
\]
which is the theorem.
\end{proof}

\section{Prime divisors}
\label{sPrimes}
In \cite[Section 5]{Smith}, Smith proved that several properties pertaining to the spacing of prime divisors of integers in the set
$\{1\leq n\leq N:\ \omega(n) = r,\ p \mid n\Rightarrow p>D\}$
occur frequently. 
We will obtain similar results on squarefree integers with no prime factor congruent to $3\bmod 4$.

Define
$S(x):=\{n<x: p\mid n \Rightarrow p\not\equiv 3\bmod 4,\ n\text{ squarefree}\}$, 
$S_r(x):=\{n\in S(x): \omega(n)=r\}$ and $\mu=\frac{1}{2}\log\log x$.
A classical result by Landau \cite{Landau} shows that
\[\Phi(x):=\#S(x)\asymp \frac{x}{\sqrt{\log x}}.\]
Noting the prime number theorem for arithmetic progressions
\[\#\{p\leq x: p\equiv 1\bmod 4\}=\frac{1}{2}\mathrm{li}(x)+O\left(x\exp\left(-c\sqrt{\log x}\right)\right),\]
one can deduce as in Sath\'{e}--Selberg theorem, uniformly in the range $r<2\mu$, we have
\[\Phi_r(x):=\#S_r(x)\asymp \frac{x}{\log x}\frac{\left(\frac{1}{2}\log\log x\right)^{r-1}}{(r-1)!}.\]
This shows that the number of distinct prime factors is Poisson distributed in $S(x)$. 

By Erd\H{o}s--Kac theorem \cite[Proposition~3]{SieveErdosKac}, the density of integers in  $S(x)$ with $|r-\mu|>\mu^{2/3}$, is 
$\ll \mu^{-1/6}\exp\left(-\frac{1}{2}\mu^{1/3}\right)\ll \exp\left(-\frac{1}{2}\mu^{1/3}\right)$.
We will make use of the following bound on the tail of the standard normal distribution 
\[\mathrm{Prob}(\mathrm{Normal}(0,1)>z)\leq \frac{\exp(-z^2/2)}{z\sqrt{2\pi}}.\]

In the following, for any $n\in S(x)$, write $r=\omega(n)$ and list the distinct prime factors of $n$ as $p_1<p_2<\dots<p_r$.

We will prove that almost all $n\in S_r(x)$ has three particular types of spacing.
\begin{theorem}\label{theorem:small}
Let $\epsilon>0$.
Take $y_1>3$ and $\eta >1$. Assume 
\begin{equation}\label{eq:rrange}
|r-\mu|<\mu^{2/3}.
\end{equation}
 Then
\begin{enumerate}
\item other than $\ll\Phi_r(x)\left((\log y_1)^{-1}+(\log x)^{-1/2+\epsilon}\right)$
exceptions, all
$n\in S_r(x)$ are comfortably spaced above $y_1$: $2y_1<p_i<p_{i+1}/2$ for any $p_i>y_1$;
\label{part1}
\item other than
$\ll\Phi_r(x)\exp(-k\eta )$
exceptions, where $k$ is an absolute constant,
all
$n\in S_r(x)$ are $\eta $-regularly spaced:
\[\left|\frac{1}{2}\log\log p_i-i\right|<\eta ^{1/5}\max\{i,\eta \}^{4/5}\text{ for all } i<\frac{1}{3}r;\]\label{part2}
\item 
other than 
$\ll \Phi_r(x)\exp\left(-(\log \log\log x)^{1/3-\epsilon}\right)$
exceptions, 
all
$n\in S_r(x)$ are extravagantly spaced:
\[\log p_i\geq (\log\log p_i)^2\cdot \log\log \log x\cdot \sum_{j=1}^{i-1}\log p_j\text{ for some } \frac{1}{2}r^{1/2}<i<\frac{1}{2}r.\]\label{part3}
\end{enumerate} 
\end{theorem}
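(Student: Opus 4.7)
The plan is to treat parts \ref{part1}--\ref{part3} separately by exhibiting the ``bad'' set as a union of explicit events and bounding each by a Sath\'{e}--Selberg-type count. Two standing inputs are the ratio
\[
\frac{\Phi_{r-k}(x/m)}{\Phi_r(x)} \ll \frac{1}{m} \cdot \frac{(r-1)\cdots(r-k)}{\mu^k},
\]
valid for fixed $k$ and $m \leq x^{1-\epsilon}$ (with $(r-1)\cdots(r-k)/\mu^k = 1+o(1)$ on the range \eqref{eq:rrange}), and a Gaussian/Chernoff tail estimate for the Landau--Selberg--Delange density of $\omega_y(n)$ restricted to primes $\equiv 1 \bmod 4$ with $p \leq y$: the mean is $\tfrac{1}{2}\log\log y$ and deviations of size $T$ occur with density $\ll \exp(-cT^2/\max(T,\tfrac{1}{2}\log\log y))$, uniformly in $y$.

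For part \ref{part1} I would partition the bad event into (a) some prime factor of $n$ lies in $(y_1, 2y_1]$, and (b) two consecutive prime factors $p_i < p_{i+1} \leq 2p_i$ both exceed $y_1$. These contribute, respectively, $\sum_{p \in (y_1, 2y_1],\, p\equiv 1(4)} \Phi_{r-1}(x/p)$ and $\sum_{y_1 < p < q \leq 2p,\, p,q \equiv 1(4)} \Phi_{r-2}(x/(pq))$. Restricting to $pq \leq x^{1-\epsilon}$ and applying the ratio above, a dyadic decomposition over scales $p \sim 2^k$ together with Mertens' theorem in arithmetic progressions gives $\sum_{p > y_1,\, p \equiv 1(4)} \frac{1}{p \log p} \ll 1/\log y_1$. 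The leftover range $pq > x^{1-\epsilon}$ is handled by a crude Brun--Titchmarsh estimate contributing $\Phi_r(x) (\log x)^{-1/2+\epsilon}$.

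For part \ref{part2}, fix $i < r/3$ and set $T = \eta^{1/5}\max(i,\eta)^{4/5}$. The event $|\tfrac{1}{2}\log\log p_i - i| > T$ translates, via the identity $\{p_i \leq y\} = \{\omega_y(n) \geq i\}$, into the event that $\omega_{y_\pm}$ deviates by $T$ from its mean, with $y_\pm = \exp(\exp(2(i \pm T)))$. The Gaussian bound above then gives probability $\ll \exp(-cT^2/\max(i,T))$; in the regime $i \geq \eta$ this is $\ll \exp(-c\eta^{2/5}i^{3/5})$, and in the regime $i < \eta$ it is $\ll \exp(-c\eta)$. The $O(\log\log x)$ factor from summing over $i < r/3$ is absorbed into the exponential by adjusting $k$.

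Part \ref{part3} is where the main difficulty lies. Assume, using part \ref{part2}, that $n$ is $\eta_0$-regularly spaced for a small constant $\eta_0$, so that the gaps $X_i := \tfrac{1}{2}\log\log p_i - \tfrac{1}{2}\log\log p_{i-1}$ behave like i.i.d.\ $\mathrm{Exp}(1)$ random variables (consistent with the Poisson heuristic above) and $\sum_{j<i}\log p_j \asymp \log p_{i-1}$. Under this regularity, the extravagance condition at index $i$ translates into $X_i \gtrsim \log i + O(\log\log\log\log x)$, whose single-index density is $\asymp 1/i$ by the Sath\'{e}--Selberg asymptotic applied to $n/(p_1\cdots p_{i-1})$. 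A union bound combined with a decoupling estimate at disjoint dyadic scales then yields the failure probability bound
\[
\prod_{r^{1/2}/2 < i < r/2} (1 - c/i) \;\ll\; \exp\Bigl(-c \sum_{r^{1/2}/2 < i < r/2} 1/i\Bigr) \;\ll\; r^{-c/2} \;\ll\; \exp(-(\log\log\log x)^{1/3-\epsilon}),
\]
for any $\epsilon > 0$. Establishing the quasi-independence of the extravagance events at widely separated indices with the required uniformity is the step that will demand the most care.
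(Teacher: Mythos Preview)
Your outline for parts~\ref{part1} and~\ref{part2} matches the paper's closely: both reduce to Sath\'{e}--Selberg ratios for $\Phi_{r-k}(x/m)$ together with Chernoff-type tail bounds on $\omega_y(n)$ (the paper invokes Tudesq's local-law inequality where you invoke a Gaussian estimate, but the arithmetic is the same).

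The genuine gap is in part~\ref{part3}. Your translation of extravagance at index $i$ into the gap condition $X_i \gtrsim \log i + O(\log\log\log\log x)$ rests on the assertion that $\eta_0$-regularity forces $\sum_{j<i}\log p_j \asymp \log p_{i-1}$. This is false: $\eta_0$-regularity only pins down $\tfrac12\log\log p_j$ to within $O(j^{4/5})$, leaving $\log p_j$ free to vary by a multiplicative factor $e^{O(j^{4/5})}$, so the ratio $\bigl(\sum_{j<i}\log p_j\bigr)/\log p_{i-1}$ can be as large as $e^{O(i^{4/5})}$. Controlling this sum is precisely the technical heart of the proof. The paper does it by comparing the sum not to $\log p_{i-1}$ but to $\log p_i$ \emph{after} the gap, and proving separately (via an explicit smooth-number bound, Lemma~\ref{lemma:ubsmooth}) that the event $p_1\cdots p_{i-1} > p_i^{(\log\mu)^{1-\kappa}-1}$, conditioned on a gap of size $p_i^{a_i}<p_{i+1}$ with $a_i=i^2(\log\mu)^{2\kappa}$, has density $\ll \exp(-(\log\mu)^{1-\kappa})$. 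The two ingredients balance only when $\kappa>2/3$, which is exactly what produces the exponent $\tfrac13-\epsilon$ in the statement; your proposed bound $r^{-c/2}=\exp(-c'\log\log\log x)$ is too strong and cannot survive this constraint.

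A related point: even granting your translation, the single-index density of $X_i>\log i + C\log\log\log\log x$ is $\asymp 1/\bigl(i\,(\log\log\log x)^{C}\bigr)$, not $\asymp 1/i$; the factor you absorbed is the one doing all the work, and it turns your product into $\exp\bigl(-c(\log\log\log x)^{1-C}\bigr)$ rather than a power of $r$.
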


\subsection{Some estimates}
\subsubsection{Upper bound for rough numbers}
Mertens' theorem shows that there exists constants $c,M>0$ such that for any $x>2$,
\[\sum_{\substack{p\leq x\\p\not\equiv 3\bmod 4}}\frac{1}{p}
=\frac{1}{2}\log\log x+M+O\left(x\exp\left(-c\sqrt{\log x}\right)\right).
\]
Fixing some large enough absolute constant $B_1>0$, we have for any $x>2$
\[\frac{1}{2}\log\log x- B_1\leq \sum_{\substack{p\leq x\\p\not\equiv 3\bmod 4}}\frac{1}{p}
\leq \frac{1}{2}\log\log x+ B_1.\]

For any set of primes $E$, define 
\[E(x):=\sum_{\substack{p\leq x\\ p\in E}}\frac{1}{p}.\]
We will apply the following theorem by Tudesq \cite[Theorem~2]{Tudesq}.
\begin{theorem}\label{theorem:Tudesq}
There exists absolute constant $B_2>0$ such that 
\[\#\{n\leq x: \omega_{E_j}(n)=k_j,\ 0\leq j\leq i\}
\ll x\exp\left(-\sum_{j=0}^l E_j(x)\right) \prod_{j=0}^l\frac{\left(E_j(x)+B_2\right)^{k_j}}{k_j!}
\]
for all $x\geq 1$, $l\geq 0$, $E_j$ pairwise disjoint sets of primes, $k_j\geq 0$.
\end{theorem}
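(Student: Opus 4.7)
The plan is to prove Theorem~\ref{theorem:Tudesq} by induction on the total weight $K := \sum_{j=0}^{l} k_j$, adapting the classical inductive argument of Hardy and Ramanujan for integers with a prescribed number of prime factors to the multivariate setting with pairwise disjoint sets.

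The base case is $K = 0$, where all $k_j = 0$ and the left-hand side counts integers $n \leq x$ with no prime factor in $\bigcup_j E_j$. Applying Rankin's inequality (equivalently, the upper-bound sieve for the multiplicative indicator $\mathbf{1}_{\omega_{\bigcup_j E_j}(n) = 0}$), followed by Mertens' theorem applied to each disjoint set separately, yields
\[
\#\{n \leq x : p \nmid n \text{ for every } p \in \bigcup_j E_j,\ p \leq x\} \ll x \prod_{j=0}^l \prod_{\substack{p \in E_j \\ p \leq x}}\!\left(1 - \frac{1}{p}\right) \ll x \exp\!\left(-\sum_{j=0}^{l} E_j(x)\right),
\]
which matches the claim since $\prod_j (E_j(x) + B_2)^{k_j}/k_j! = 1$ when all $k_j = 0$.

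For the inductive step, fix an index $\ell$ with $k_\ell \geq 1$ and use the Hardy-Ramanujan identity
\[
k_\ell \cdot \#\{n \leq x : \omega_{E_j}(n) = k_j \ \forall j\} = \sum_{\substack{p \in E_\ell \\ p \leq x}} \#\{n \leq x : p \mid n,\ \omega_{E_j}(n) = k_j \ \forall j\}.
\]
Writing $n = pm$ (and absorbing the lower-order contribution from $p^2 \mid n$ separately), the inner count is majorized by $\#\{m \leq x/p : \omega_{E_j}(m) = k_j - \delta_{j,\ell} \ \forall j\}$. Applying the inductive hypothesis, using $E_j(x/p) \leq E_j(x)$ to pass to Mertens sums evaluated at $x$, and invoking $\sum_{p \in E_\ell, p \leq x} 1/p = E_\ell(x)$, one obtains
\[
k_\ell \cdot \#\{\ldots\} \ll x \exp\!\left(-\sum_j E_j(x)\right) \prod_{j \neq \ell} \frac{(E_j(x) + B_2)^{k_j}}{k_j!} \cdot \frac{E_\ell(x)\, (E_\ell(x) + B_2)^{k_\ell - 1}}{(k_\ell - 1)!}.
\]
Bounding $E_\ell(x) \leq E_\ell(x) + B_2$ and dividing both sides by $k_\ell$ closes the induction.

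The main obstacle is ensuring that the implicit constant and the shift $B_2$ remain absolute, independent of $l$, $x$, and the particular choice of $E_j$. The delicate step is the replacement of $\exp(-E_j(x/p))$ by $\exp(-E_j(x))$ in the inductive hypothesis, which introduces a factor $\exp\bigl(\sum_j (E_j(x) - E_j(x/p))\bigr)$ that must be controlled uniformly in $l$. Pairwise disjointness of the $E_j$ is what makes this sum telescope into $\sum_{x/p < q \leq x} 1/q$, which by Mertens is $\log\log x - \log\log(x/p) + O(1)$: bounded by $O(1)$ for $p \leq \sqrt{x}$, with the remaining range $p > \sqrt{x}$ contributing negligibly thanks to the $1/p$ weight. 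The resulting $O(1)$ loss per inductive step is absorbed once into $B_2$ rather than accumulating through the induction. Without the disjointness hypothesis each prime could contribute to multiple Mertens sums and $B_2$ would have to grow linearly in $l$, so the structural hypothesis of the theorem is essential.
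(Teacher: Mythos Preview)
The paper does not prove this result: it is quoted verbatim as \cite[Theorem~2]{Tudesq} and then applied. So there is no ``paper's own proof'' to compare with; any correct argument you supply is supplementary.

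Your Hardy--Ramanujan induction on $K=\sum_j k_j$ is the natural route and is almost certainly close to Tudesq's own argument, but as written it has a real gap in the inductive step. After applying the inductive hypothesis at $x/p$ and passing back to $x$, the sum you must control is
\[
S \;=\; \sum_{\substack{p\in E_\ell\\ p\le x}} \frac{1}{p}\,\exp\!\Bigl(\sum_j\bigl(E_j(x)-E_j(x/p)\bigr)\Bigr),
\]
and to close the induction with a \emph{fixed} implicit constant you need $S\le E_\ell(x)+B'$ for an absolute $B'$. Your sketch instead uses that the exponent is $O(1)$ for $p\le\sqrt{x}$; this gives only $e^{O(1)}\cdot E_\ell(\sqrt{x})$, i.e.\ a bound of the shape $C_0\,E_\ell(x)+O(1)$ with $C_0>1$. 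That multiplicative $C_0$ is picked up at every step of the induction, producing $C_0^{K}$ in the final implicit constant --- it is \emph{not} additive and therefore cannot be ``absorbed once into $B_2$'' as you claim. The standard cure is to first add a dummy set $E_{l+1}$ containing the remaining primes (so that $\bigcup_j E_j$ is all primes and $\exp(-\sum_j E_j(y))$ becomes comparable to $1/\log y$), and then prove by induction the bound with weight $x/\log x$; in that setting the required estimate is the sharper
\[
\sum_{\substack{p\in E_\ell\\ p\le x}} \frac{1}{p\,\log(x/p)} \;\le\; \frac{E_\ell(x)+B'}{\log x},
\]
obtained by writing $\frac{1}{\log(x/p)}=\frac{1}{\log x}+\frac{\log p}{\log x\,\log(x/p)}$ and showing the second piece contributes $O(1/\log x)$. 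Summing the resulting bound over $k_{l+1}\ge 0$ recovers your stated form. A smaller point: ``Rankin's inequality'' alone does not yield the base case $K=0$ (taking $\sigma=1$ diverges); you need a genuine sieve or Shiu/Hal\'asz-type upper bound for the multiplicative indicator of $\{n:\ p\nmid n\ \forall p\in\bigcup_j E_j\}$.
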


In our application we will take $E_0$ to be the set of primes congruent to $3\bmod 4$ and $E_0,E_1,\cdots,E_l$ to be pairwise disjoint sets of primes so that $\cup_{j=0}^l E_j$ contains all primes. Also take $k_0=0$, and $k_1+\cdots +k_l=r$.
Then 
\[\#\{n\in S_r(x): \omega_{E_j}(n)=k_j,\ 1\leq j\leq l\}
\ll \frac{x}{\log x}\prod_{j=1}^l\frac{\left(E_j(x)+B_2\right)^{k_j}}{k_j!}.
\]

\subsubsection{Upper bound for smooth numbers}
Define 
\[\Psi_r(x,y):=\#\{n\in S_r(x):\ p\mid n\Rightarrow p<y\},\]
which is the size of the set of $y$-smooth numbers in $S_r(x)$.

We will need an upper bound for smooth numbers for small $u:=\log x/\log y$. There are works the number of prime factors of smooth numbers \cite{Alladi,Hensley,Hildebrand}, but none of which explicitly gives a formula for the range of small $u$ we are interested in. We prove an upper bound here that is sufficient for our application, although more work could be done to obtain a more precise estimate.

\begin{lemma}\label{lemma:ubsmooth}
Fix some $\epsilon>0$. There exists some large enough $A>0$ such that the following holds.
Take $x>y>2$ and some integer $k\geq 1$ such that $\frac{1}{2}k<\frac{1}{2}\log\log y<2k$ and $u:=\frac{\log x}{\log y}<(\log x)^{1-\epsilon}$ and assume $u>A$.
Then 
\[\Psi_k(x,y)\leq\frac{u^{-u}x}{\log y}\cdot\frac{(\frac{1}{2}\log\log y)^{k-1}}{(k-1)!}.\]
\end{lemma}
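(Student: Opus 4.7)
My plan is to combine Rankin's trick with the trivial combinatorial bound, using whichever is sharper and comparing each to the target
\[
T := \frac{u^{-u}x}{\log y} \cdot \frac{\mu^{k-1}}{(k-1)!}, \qquad \mu := \tfrac{1}{2}\log\log y.
\]

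First I would apply Rankin's trick: for $\sigma \in (0,1]$,
\[
\Psi_k(x,y) \le x^{\sigma} \sum_{n \in \Psi_k(x,y)} n^{-\sigma} \le \frac{x^{\sigma}}{k!}\, \Sigma(\sigma)^k, \qquad \Sigma(\sigma) := \sum_{\substack{p \le y\\ p \not\equiv 3\, (4)}} p^{-\sigma},
\]
the last step being the standard bound on the $k$-th elementary symmetric polynomial of $\{p^{-\sigma}\}_{p}$. Choosing $\sigma = 1 - \log u/\log y$ yields $x^\sigma = xu^{-u}$, which captures the smoothness saving. Partial summation applied to $\Sigma(\sigma)$, using the PNT in arithmetic progressions $\#\{p \le t : p \equiv 1 \bmod 4\} = \tfrac{1}{2}\li(t) + O(t\exp(-c\sqrt{\log t}))$ recorded earlier in the paper, then gives $\Sigma(\sigma) \sim \tfrac{1}{2}\Ei(\log u) \sim u/(2\log u)$ as $u \to \infty$.

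The argument would then proceed by a dichotomy on the size of $k$ relative to $u$. When $k \le u$ I would use the combinatorial bound $\Psi_k(x,y) \le \tfrac{1}{k!}(y/(2\log y))^k (1+o(1))$, valid because each such $n$ is determined by a $k$-subset of the primes below $y$ that are $\equiv 1 \bmod 4$; taking logs, the inequality (combinatorial) $\le T$ reduces to $u\log u \le (u-k)\log y + (k-1)\log\log y + O(k\log k)$, which follows from $u < (\log x)^{1-\epsilon}$ together with $k \le 2\mu = \log\log y$ after a short calculation. When $k > u$ I would use Rankin instead: the ratio Rankin$/T$ simplifies via $\mu \asymp k$ and Stirling to $\log y \cdot (u/(2k\log u))^k (1+o(1))$, and the bound $\log\log y \le 4k$ (from $k > \mu/2$) reduces checking ``ratio $\le 1$'' to the inequality $\log(2k\log u/u) \ge 4$; since $k > u$ forces $2k\log u/u > 2\log u$, this holds once $u > A$ with $A$ sufficiently large depending on $\epsilon$.

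The hard part will be the transition region $k \approx u$, where both bounds are only marginally better than $T$; this is precisely where the hypotheses $u > A$ (with $A$ absorbing all implicit $\epsilon$-dependent constants) and $\tfrac{1}{2}k < \mu < 2k$ are used to absorb the Stirling factors and the partial-summation error in $\Sigma(\sigma)$, so that the two bounds interlock cleanly across the full hypothesized range.
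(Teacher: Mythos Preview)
Your partial-summation step for $\Sigma(\sigma)=\sum_{p\le y,\ p\not\equiv 3(4)}p^{-\sigma}$ is wrong, and this breaks the Rankin branch of your dichotomy. With $\sigma=1-\log u/\log y$ one has
\[
\int_{e}^{y}\frac{dt}{t^{\sigma}\log t}=\Ei(\log u)-\Ei\!\left(\frac{\log u}{\log y}\right),
\]
and since $\log u/\log y\to 0$ under the hypotheses, the second term contributes $\log\log y+O(1)$, not $O(1)$. Hence $\Sigma(\sigma)=\mu+\frac{u}{2\log u}+O(1)$ with $\mu=\tfrac12\log\log y$, \emph{not} $\sim u/(2\log u)$. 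In your regime $k>u$ the hypothesis $\tfrac12 k<\mu$ gives $\mu>u/2>u/\log u$, so $\Sigma(\sigma)\ge\mu$ and your Rankin bound yields
\[
\frac{x^{\sigma}\Sigma(\sigma)^{k}}{k!\,T}
=\frac{\log y}{k}\cdot\frac{\Sigma(\sigma)^{k}}{\mu^{k-1}}
\ge \frac{\mu}{k}\cdot\log y\asymp \log y,
\]
which is enormous. The inequality $\log(2k\log u/u)\ge 4$ that you derive is based on the incorrect value of $\Sigma(\sigma)$ and does not rescue the argument.

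The paper avoids this in two ways. First, it takes $\sigma=1-\frac{\log(u\log u)}{\log y}$, gaining an extra factor $(\log u)^{u}$ in $x^{\sigma}$; second, it does not bound $\Psi_k$ directly by $x^{\sigma}\Sigma^{k}/k!$ but instead uses the identity $(\log x)\Psi_k(x,y)=\sum_n\log n+\sum_n\log(x/n)$, peeling off one prime in the main term so that only $\Sigma^{k-1}/(k-1)!$ appears, multiplied by $y^{1-\sigma}=u\log u$. After correctly evaluating $\Sigma(\sigma)=\mu+\tfrac12 u(1+o(1))$ and using $\mu<2k$ to get $(1+u/(2\mu))^{k-1}\le e^{u}$, the extra $(\log u)^{u}$ from the choice of $\sigma$ absorbs this $e^{u}$, and no case split on $k$ versus $u$ is needed. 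Your combinatorial branch for $k\le u$ looks salvageable, but you will need a genuinely different argument (closer to the paper's) for $k>u$.
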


\begin{proof}
We have 
\begin{equation}\label{eq:smooth}
(\log x )\Psi_k(x,y)=\sum_{n\in\Psi_k(x,y)}\log n+\sum_{n\in\Psi_k(x,y)}\log \frac{x}{n}.
\end{equation}
We first treat the first term, which is the main contribution. Factoring each $n\in\Psi_k(x,y)$ gives
\begin{equation}\label{eq:logn}
\sum_{n\in\Psi_k(x,y)}\log n
\leq
\sum_{m\in \Psi_{k-1}(x,y)}\sum_{\substack{p<\min\left\{\frac{x}{m},y\right\}\\ p\not\equiv 3\bmod 4}} \log p
.
\end{equation}
Now taking any $0<\sigma<1$, we have
\[
\sum_{\substack{p<\min\left\{\frac{x}{m},y\right\}\\ p\not\equiv 3\bmod 4}} \log p\ll \min\left\{\frac{x}{m},y\right\}\leq 
\left( \frac{x}{m}\right)^{\sigma}y^{1-\sigma}
\]
Then writing $\frac{1}{m}=\prod_{p\mid m}\frac{1}{p}$, \eqref{eq:logn} becomes 
\[\sum_{n\in\Psi_k(x,y)}\log n\ll x^{\sigma}y^{1-\sigma}\sum_{m\in \Psi_{k-1}\left(\frac{x}{p},y\right)} \frac{1}{m^{\sigma}}\ll
\frac{x^{\sigma}y^{1-\sigma}}{(k-1)!}\left(\sum_{\substack{p<y\\ p\not\equiv 3\bmod 4}}\frac{1}{p^{\sigma}}\right)^{k-1}
.\]
Take $\sigma=1-\frac{\log (u\log u)}{\log y}$, which is
positive and tends to $1$ since $u<(\log x)^{1-\epsilon}$.
Then 
$x^{\sigma}=\frac{x}{(u\log u)^u}$
and
$y^{1-\sigma}=u\log u$.
Noting that $\li(t)=\frac{t}{\log t}+O(\frac{t}{(\log t)^2})$ and $\Ei(1/t)=\log t+O(1)$, we have
\[
\int_{e<t<y}\frac{dt}{t^{\sigma}\log t}
=\li(u\log u)-\Ei\left(\frac{\log(u\log u)}{\log y}\right)
=\log\log y+u\left(1+O\left(\frac{\log\log u}{\log u}\right)\right)
.
\]
Therefore evaluating the Stieltjes integral $\int_{t<y}\frac{d\pi(t)}{2t^{\sigma}}$ gives
\[\sum_{\substack{p<y\\ p\not\equiv 3\bmod 4}}\frac{1}{p^{\sigma}}=\frac{1}{2}\log\log y+\frac{1}{2}u\left(1+O\left(\frac{\log\log u}{\log u}\right)\right).\]
Putting together we get 
\[\sum_{n\in\Psi_k(x,y)}\log n
\ll
\frac{x}{(u\log u)^{u-1}}\cdot\frac{\left(\frac{1}{2}\log\log y+\frac{1}{2}u\left(1+O\left(\frac{\log\log u}{\log u}\right)\right)\right)^{k-1}}{(k-1)!}
.\]
The second sum in \eqref{eq:smooth} is
\[\leq \frac{x^\sigma}{\sigma}\sum_{n\in \Psi(x,y)}\frac{1}{n^\sigma}
\leq \frac{x^\sigma}{\sigma}\left(\sum_{\substack{p<y\\ p\not\equiv 3\bmod 4}}\frac{1}{p^{\sigma}}\right)^k
\ll
\frac{x}{(u\log u)^u}\cdot \frac{\left(\frac{1}{2}\log\log y+\frac{1}{2}u\left(1+O\left(\frac{\log\log u}{\log u}\right)\right)\right)^k}{k!}.
\]
Since $\log\log y/2k$ is bounded, putting back in \eqref{eq:smooth}, 
\begin{multline*}
\Psi_k(x,y)\ll
\frac{1}{(u\log u)^{u-1}}\cdot \frac{x}{\log x}\cdot \frac{\left(\frac{1}{2}\log\log y+\frac{1}{2}u\left(1+O\left(\frac{\log\log u}{\log u}\right)\right)\right)^{k-1}}{(k-1)!}\\
\ll\frac{e^{u}}{(u\log u)^{u}}\cdot\frac{x}{\log y}\cdot\frac{\left(\frac{1}{2}\log\log y\right)^{k-1}}{(k-1)!},
\end{multline*}
which implies the required result.
\end{proof}

\subsection{Proof of Theorem~\ref{theorem:small}}
\subsubsection{Proof of Theorem~\ref{theorem:small}\ref{part1}}
The number of  $n\in S_r(x)$ for which
\[y_1<p<2y_1\text{ for some } p\mid n\text{ or }y_1<q<p<2q\text{ for some } pq\mid n\]
is bounded by
\[
\sum_{\substack{y_1<p_i<2y_1\\p\equiv 1\bmod 4}}\Phi_{r-1}\left(\frac{x}{p}\right)
+\sum_{\substack{y_1<q<\sqrt{x}\\q\equiv 1\bmod 4}}\sum_{\substack{q<p<2q\\p\equiv 1\bmod 4}}\Phi_{r-2}\left(\frac{x}{pq}\right)
.\]

Split the sum into the cases $p<x^{1/4}$ and $p>x^{1/4}$.
First bound the sum $p<x^{1/4}$, assume $y_1<x^{1/4}$ otherwise the sum is zero. The sum is bounded by
\[\ll \Phi_r(x)\sum_{\substack{y_1<p<2y_1\\p\equiv 1\bmod 4}}\frac{1}{p}
+\Phi_r(x)\sum_{\substack{y_1<q<\sqrt{x}\\q\equiv 1\bmod 4}}\sum_{\substack{q<p<2q\\p\equiv 1\bmod 4}}\frac{1}{pq}\\
\ll\frac{\Phi_r(x)}{\log y_1}.\]
The sum $p>x^{1/4}$ is bounded by
\[
x\sum_{\substack{y_1<p<2y_1\\p>x^{1/4}\\p\equiv 1\bmod 4}}\frac{1}{p}+x\sum_{\substack{y_1<q<\sqrt{x}\\q\equiv 1\bmod 4}}\sum_{\substack{q<p<2q\\p>x^{1/4}\\p\equiv 1\bmod 4}}\frac{1}{pq}
\ll \frac{x}{\log x}\ll 
\frac{\Phi_r(x)}{\sqrt{\log{x}}}\sqrt{\mu}\exp\left(\frac{1}{2}\mu^{1/3}\right)
\ll 
\frac{\Phi_r(x)}{(\log{x})^{1/2-\epsilon}}
.
\]

\subsubsection{Proof of Theorem~\ref{theorem:small}\ref{part2}}
In the following let $B:=2B_1+B_2$, where $B_1,B_2$ are as defined in Theorem~\ref{theorem:Tudesq}.

\begin{lemma}\label{lemma:ibound}
Fix $0<\epsilon<1$. Then there exist some $A>0$ such that the following holds.
Assume $r$ satisfies \eqref{eq:rrange} and take $1\leq i\leq \frac{1}{2} r$. 
Let $\max\{\frac{4B}{\epsilon},i^{4/5}\}<\lambda<\frac{1}{3}r$.
For any $x>A$, the number of $n\in S_r(x)$ such that
$\left|\frac{1}{2}\log\log p_i-i\right|>\lambda$,
is 
\[\ll 
\Phi_r(x)
\exp\left(-\frac{(1-\epsilon)\lambda^2}{2(i+\lambda)}\right).
\]
\end{lemma}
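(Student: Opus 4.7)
The plan is to split the deviation event into two one-sided tails and bound each via Tudesq's theorem (Theorem~\ref{theorem:Tudesq}) combined with a Chernoff-type estimate. Partition the bad set as $S^{-}(x) \cup S^{+}(x)$, where
\[
S^{-}(x) := \{n \in S_r(x) : \tfrac{1}{2}\log\log p_i(n) < i - \lambda\}, \quad S^{+}(x) := \{n \in S_r(x) : \tfrac{1}{2}\log\log p_i(n) > i + \lambda\}.
\]
Set $y_{-} := \exp(\exp(2(i-\lambda)))$ and $y_{+} := \exp(\exp(2(i+\lambda)))$. Then $n \in S^{-}(x)$ forces at least $i$ prime divisors of $n$ to lie in $(1,y_{-}]$, while $n \in S^{+}(x)$ forces at most $i-1$ prime divisors of $n$ to lie in $(1,y_{+}]$. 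The hypotheses $i \le r/2$, $\lambda < r/3$, and \eqref{eq:rrange} imply $y_{\pm} \in [2,x]$, so both conditions are nonvacuous.

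For $y \in \{y_{-}, y_{+}\}$, apply Theorem~\ref{theorem:Tudesq} with $E_0 = \{p \equiv 3 \bmod 4\}$, $E_1 = \{p \le y : p \not\equiv 3 \bmod 4\}$, $E_2 = \{p > y : p \not\equiv 3 \bmod 4\}$, and $k_0 = 0$ (the latter enforcing $n \in S_r(x)$). Mertens gives $E_1(x) = (i \pm \lambda) + O(1)$ and $E_1(x) + E_2(x) = \mu + O(1)$, with signs matching those of $S^{\pm}$. Setting $a := E_1(x) + B_2$, $b := E_2(x) + B_2$, so that $a + b = \mu + O(B)$, we obtain
\[
\# S^{\pm}(x) \ll \frac{x}{\log x} \sum_{k_1 \in \mathcal{I}^{\pm}} \frac{a^{k_1}\, b^{r-k_1}}{k_1!\,(r-k_1)!},
\]
with $\mathcal{I}^{-} = \{k_1 \ge i\}$ and $\mathcal{I}^{+} = \{k_1 \le i-1\}$. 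Dividing by $\Phi_r(x) \asymp \frac{x}{\log x} \cdot \frac{\mu^{r-1}}{(r-1)!}$ and using $r \asymp \mu$, the ratio reduces, up to a bounded multiplicative factor, to a one-sided tail of a $\mathrm{Binom}(r,\, a/(a+b))$ random variable whose mean $r a/(a+b)$ equals $(i \pm \lambda) + O(B)$.

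Apply the Chernoff bound in Kullback--Leibler form and Taylor-expand the divergence to second order. For $S^{+}(x)$ the left-tail rate equals $(\lambda+1)^2/(2(i+\lambda))$, while for $S^{-}(x)$ the right-tail rate is the strictly stronger $\lambda^2/(2(i-\lambda))$; in the degenerate regime $i \le \lambda$ the set $S^{-}(x)$ is essentially empty since $y_{-} < 3$. The $O(B)$ uncertainty in $a$ and $b$ perturbs the exponent by a multiplicative factor $1 + O(B/\lambda)$, and the hypothesis $\lambda > 4B/\epsilon$ is tailor-made to absorb this perturbation into the prefactor $(1-\epsilon)$ in the statement. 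The main obstacle in executing this plan is the bookkeeping in the Chernoff step: one must simultaneously track the additive $B_2$ shift from Tudesq and the $O(1)$ error from Mertens across the entire admissible range of $i$ and $\lambda$ (the second-order expansion of the divergence is weakest when $i+\lambda$ is comparable to $\mu$), and verify uniformly that the cumulative loss stays within the factor $1-\epsilon$. Since $\# S^{+}(x)$ produces the dominant contribution, the stated bound then follows.
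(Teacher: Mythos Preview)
Your proposal is correct and follows essentially the same approach as the paper's proof: split into the two one-sided tails, apply Tudesq's theorem with the three-part prime partition $E_0,E_1,E_2$ (with $k_0=0$), reduce to a binomial tail with parameter $(i\pm\lambda+O(B))/\mu$, and invoke a Chernoff-type inequality, absorbing the $O(B)$ and $|r-\mu|\le\mu^{2/3}$ perturbations via the hypotheses $\lambda>4B/\epsilon$ and $\lambda>i^{4/5}$. The paper carries out these same steps, citing the explicit Chernoff bounds in \cite{probabilistic} rather than phrasing them in Kullback--Leibler form, and tracking the factor $R=r/\mu$ explicitly in each tail rather than folding it into the perturbation analysis; these are differences of presentation, not of method.
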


\begin{proof}
The number of $n\in S_r(x)$ such that 
$\frac{1}{2}\log\log p_i>i+\lambda$, by Theorem~\ref{theorem:Tudesq}
is bounded by 
\begin{multline*}
\ll\frac{x}{\log x}\sum_{l=0}^{i-1}
\frac{\left(i+\lambda+B\right)^{l}}{l!}\frac{\left(\mu-(i+\lambda)+B\right)^{r-l}}{(r-l)!}
\ll
\Phi_r(x)
\sum_{l=0}^{i-1}
\binom{r}{l}
\left(\frac{i+\lambda}{\mu}\right)^{l}
\left(1-\frac{i+\lambda}{\mu}\right)^{r-l},
\end{multline*}
where we have used
\[
\left(1+
\frac{B}
{i+\lambda}
\right)^{l}
\left(1+
\frac{B}
{\mu-(i+\lambda)}
\right)^{r-l}
\leq
\exp\left(\frac{iB}
{i+\lambda}
+
\frac{rB}
{\mu-(i+\lambda)}
\right)
\leq
\exp(7B)
.
\]
Using Chernoff bound \cite[Theorem~A.1.12]{probabilistic} and  maximising over $r$ in the range $|\mu-r|<\mu^{2/3}$, we have
\begin{multline*}
\leq
\Phi_r(x)
\exp\left(-\frac{r\mu}{2(i+\lambda)}\left(\frac{i+\lambda}{\mu}-\frac{i}{r}\right)^2\right)
\leq
\Phi_r(x)
\exp\left(-\frac{1-\mu^{-1/3}}{2(i+\lambda)}\left(i+\lambda-\frac{i}{1-\mu^{-1/3}}\right)^2\right)\\
\leq
\Phi_r(x)
\exp\left(-\frac{1-\mu^{-1/3}}{2(i+\lambda)}\left(\lambda-i^{2/3}\right)^2\right)
\leq
\Phi_r(x)
\exp\left(-\frac{(1-\epsilon)\lambda^2}{2(i+\lambda)}\right)
.
\end{multline*}

For the number of $n\in S_r(x)$ satisfying 
$\frac{1}{2}\log\log p_i<i-\lambda$, first note that there are none if $\lambda\geq i$, so assume $\lambda<i$.
Again by Theorem~\ref{theorem:Tudesq}, 
this is bounded by 
\begin{multline*}
\frac{x}{\log x}
\sum_{l=i}^{r}
\frac{\left(i-\lambda+B\right)^{l}}{l!}\frac{\left(\mu-(i-\lambda)+B\right)^{r-l}}{(r-l)!}\\
\ll
\Phi_r(x)
\sum_{l=i}^{r}
\binom{r}{l}
\left(\frac{i-\lambda+B}{\mu}\right)^{l}
\left(1-\frac{i-\lambda+B}{\mu}\right)^{r-l},
\end{multline*}
noting that
\[\left(1+\frac{2B}{\mu-(i-\lambda)-B}\right)^{r-l}
\leq
\exp\left(\frac{2B(r-l)}{\mu-(i-\lambda)-B}\right)
\leq
\exp\left(\frac{2B(r-i)}{\mu-i+\lambda-B}\right)
\leq
\exp(4B).
\]
By Chernoff bound (see for example \cite[Theorem~A.1.9]{probabilistic} with the optimal $\lambda$ given in the remark after) and writing $R:=\frac{r}{\mu}$, so $|R-1|<\mu^{-1/3}$, this is bounded by
\begin{multline*}
\leq\Phi_r(x)
R^i
\left(1-\frac{\lambda-B}{i}\right)^i
\left(1+\frac{i(1-R)+R(\lambda-B)}{r-i}\right)^{r-i}\\
\leq\Phi_r(x)
\exp(-i(1-R))
\exp\left(-(\lambda-B)-\frac{(\lambda-B)^2}{2i}\right)
\exp\left(i(1-R)+R(\lambda-B)\right)\\
\leq\Phi_r(x)
\exp\left(-(\lambda-B)\left(\frac{\lambda-B}{2i}-(R-1)\right)\right)
\leq\Phi_r(x)
\exp\left(-\frac{(1-\epsilon)\lambda^2}{2i}\right)
.\qedhere\end{multline*}
\end{proof}

The theorem is trivial when $\eta >r$, so assume $\eta <r$. Take $\tilde{\eta }=\frac{1}{3}\eta $, so that $\tilde{\eta }<\frac{1}{3}$ and apply Lemma~\ref{lemma:ibound} with $\lambda=\tilde{\eta }^{1/5}\max\{i,\tilde{\eta }\}^{4/5}$.
We get that the number of $n\in S_r(x)$ such that
\[\left|\frac{1}{2}\log\log p_i-i\right|>\eta ^{1/5}\max\{i,\eta \}^{4/5}>\lambda
\text{ for some } i<\frac{1}{3}r,\]
is bounded by
$\ll\Phi_r(x)\exp\left(-\frac{1}{5}\tilde{\eta }\right)$, when $\tilde{\eta }>20B$.
Summing over all $i<\frac{1}{3}r$ gives the bound
$\ll \Phi(x)\exp(-\frac{1}{6}\tilde{\eta })
=\Phi(x)\exp(-\frac{1}{18}\eta)$.

\subsubsection{Proof of Theorem~\ref{theorem:small}\ref{part3}}
Fix $\kappa>\frac{2}{3}$. 
We will show that other than $\ll
\Phi_r(x)\exp\left(-(\log \mu)^{1-\kappa(1+\epsilon)}\right)$
exceptions, we have 
\[\max_{\frac{1}{2}\sqrt{r}<i<\frac{1}{2}r}
\log\log p_i-\log\left(\sum_{j=1}^{i-1}\log p_j\right)
- 2\log \log\log p_i
>(3\kappa-1)\log\log\mu -2.\]

First remove $n\in S_r(x)$ for which 
\[\left|\frac{1}{2}\log\log p_i-i\right|>i^{4/5} \text{ for some } \frac{1}{2}\sqrt{r}<i<\frac{1}{2}r.\] 
Applying Lemma~\ref{lemma:ibound} with $\lambda=i^{4/5}$, for each $i$ we have the bound
$\leq 
\Phi_r(x)\exp\left(-\frac{1}{5}i^{3/5}\right)$.
Summing over $\frac{1}{2}\sqrt{r}<i<\frac{1}{2}r$ gives
$\ll \Phi(x)\exp(-\frac{1}{10}\mu^{3/10})$.

The remaining $n\in S_r(x)$ has
\begin{equation}\label{eq:pi}
\left|\frac{1}{2}\log\log p_i-i\right|<i^{4/5}
\text{ for every }
\frac{1}{2}\sqrt{r}<i<\frac{1}{2}r.
\end{equation}
Let $m=\lceil\frac{1}{2}\sqrt{r}\rceil-1$ and $k=\lfloor \frac{1}{2}r\rfloor-1$, so $p_1\cdots p_k\leq \sqrt{x}$.
We bound the number of $n\in S_r(x)$ for which $p_i< p_{i+1}\leq p_i^{a_i}$ for all $m\leq i\leq k$, where $a_i=(i+1)^2(\log \mu)^{2\kappa}$.
Apply Theorem~\ref{theorem:Tudesq}, with the set $E_1$ containing the primes less than $p_m$ and $E_2$ containing the primes greater than $p_k$, and on numbers up to $\frac{x}{p_m\cdots p_k}$, we get
\[
\ll \frac{x}{\log x}\sum_{\substack{p_ i<p_{i+1}\leq p_i^{a_i}\\m\leq i\leq k\\p_j\not\equiv 3\bmod 4}}
\frac{1}{p_m\cdots p_k}\cdot 
\frac{\left(\mu-\frac{1}{2}\log\log p_{k}+B\right)^{r-k}}{(r-k)!}\cdot
\frac{\left(\frac{1}{2}\log\log p_{m}+B\right)^{m-1}}{(m-1)!}.
\]
Now fixing some $m\leq i\leq k$, by the prime number theorem and partial summation we have
\begin{multline*}
\sum_{\substack{p_{i}<p_{i+1}\leq p_{i}^{a_{i}}\\p_j\not\equiv 3\bmod 4}}
\frac{1}{p_{i+1}}
\frac{\left(\mu-\frac{1}{2}\log\log p_{i+1}+B\right)^{r-i-1}}{(r-i-1)!}\\
\leq
\frac{\left(\mu-\frac{1}{2}\log\log p_{i}+B\right)^{r-i}}{(r-i)!}-\frac{\left(\mu-\frac{1}{2}\log\log p_{i}-\frac{1}{2}\log a_{i}+B\right)^{r-i}}{(r-i)!}+O\left(\exp(-c\sqrt{\log p_i})\right)\\
\leq
\frac{\left(\mu-\frac{1}{2}\log\log p_{i}+B\right)^{r-i}}{(r-i)!}
\left(1-
\left(1-\frac{\frac{1}{2}\log a_{i}}{\mu-\frac{1}{2}\log\log p_{i}+B}
\right)^{r-i}\right)+O\left(\exp(-c\sqrt{\log p_i})\right)\\
\leq 
\frac{\left(\mu-\frac{1}{2}\log\log p_{i}+B\right)^{r-i}}{(r-i)!}
\left(1-a_i^{-\frac{1}{2}+\mu^{-1/3}}
\right)+O\left(\exp(-c\sqrt{\log p_i})\right).\\
\end{multline*}
Applying this repeatedly for $i=k,\ k-2,\dots,\ m$, we have
\[\ll
\Phi_r(x)
\prod_{m\leq i\leq k}
\left(1-\frac{1}{(i(\log \mu)^{\kappa})^{1+\mu^{-1/3}}}\right)
\ll
\Phi_r(x)\exp\left(-(\log \mu)^{1-\kappa(1+\epsilon)}\right).
\]
It follows that other than $\ll
\Phi_r(x)\exp\left(-(\log \mu)^{1-\kappa(1+\epsilon)}\right)$ exceptions, we have
\begin{equation}\label{eq:gap}
p_{i-1}^{i^2(\log\mu)^{2\kappa}}<p_{i}\quad\text{ for some } \frac{1}{2}\sqrt{r}<i<\frac{1}{2}r.
\end{equation}

For the remaining $n\in S_r(x)$, we have
\eqref{eq:pi} and \eqref{eq:gap}
which implies 
\[\max_{\frac{1}{2}\sqrt{r}<i<\frac{1}{2}r}
\log\log p_i-\log\log p_{i-1}
- 2\log \log\log p_i
>2\kappa\log\log\mu -2.\]
It remains to remove $n\in S_r(x)$ for which there exists some $\frac{1}{2}\sqrt{r}\leq i<\frac{1}{2}r$ such that $p_i^{a_i}<p_{i+1}$ and
\[
\sum_{j=1}^{i}\log p_j>(\log \mu)^{1-\kappa}\log p_i.
\]
Rewrite the second condition as $p_i^u<p_1\cdots p_{i-1}$, where $u:=(\log \mu)^{1-\kappa}-1$.
We wish to bound
\begin{multline}\label{eq:smoothbd}
\sum_{\substack{p_i\equiv 1\bmod 4\\ \eqref{eq:pi}}}
\sum_{\substack{p_1<\dots<p_i\\p_1\cdots p_{i-1}>p_i^{u}\\p_j\not\equiv 3\bmod 4}}
\sum_{\substack{p_i^{a_i}<p_{i+1}<\dots<p_r\\
p_{i+1}\cdots p_r<\frac{x}{p_1\cdots p_i}\\p_j\not\equiv 3\bmod 4}} 1
\\
\ll 
\frac{x}{\log x}\sum_{\substack{p_i\equiv 1\bmod 4\\ \eqref{eq:pi}}}
\frac{\left(\mu-\frac{1}{2}\log\log p_i-\frac{1}{2}\log a_i+B\right)^{r-i}}{p_i(r-i)!}
\sum_{\substack{p_1<\dots<p_i\\p_1\cdots p_{i-1}>p_i^{u}\\p_j\not\equiv 3\bmod 4}}
\frac{1}{p_1\cdots p_{i-1}}.
\end{multline}
Fix a given $p_i$ with $\left|\frac{1}{2}\log\log p_i-i\right|<i^{2/3}$ and $p_i^{u}<N<\min\{p_i^i,x\}$, 
by Lemma~\ref{lemma:ubsmooth} we have
\[
\sum_{\substack{p_1<\dots<p_{i-1}<p_i\\N<p_1\cdots p_{i-1}\leq 2N\\p_j\not\equiv 3\bmod 4}}
\frac{1}{p_1\cdots p_{i-1}}
\leq \frac{1}{N}\Psi_{i-1}(2N,p_i)
\ll \frac{v^{-v}}{\log p_i}\cdot\frac{(\frac{1}{2}\log\log p_i)^{i-1}}{(i-1)!},\]
where $v:=\log N/\log p_i$.
To deal with final part of the sum in \eqref{eq:smoothbd},
split $(p_i^u,p_i^{i-1})$ into dyadic intervals of the form $(N,2N]$, then
\begin{multline*}
\sum_{\substack{p_1<\dots<p_i\\p_1\cdots p_{i-1}>p_i^{u}\\p_j\not\equiv 3\bmod 4}}
\frac{1}{p_1\cdots p_{i-1}}
\ll \frac{1}{\log p_i}\cdot\frac{(\frac{1}{2}\log\log p_i)^{i-1}}{(i-1)!}
\sum_{\substack{k\geq 0\\N=2^kp_i^u}}v^{-v}\\
\ll
\frac{(\frac{1}{2}\log\log p_i)^{i-1}}{(i-1)!}\int_{v>u} v^{-v}dv
\ll
u^{-u}\frac{\left(\frac{1}{2}\log \log p_i\right)^{i-1}}{(i-1)!}.
\end{multline*}
Therefore \eqref{eq:smoothbd} becomes
\begin{multline*}
\ll 
\frac{xu^{-u}}{\log x}
\sum_{\substack{p_i\equiv 1\bmod 4\\ \eqref{eq:pi}}}
\frac{1}{p_i}\cdot
\frac{\left(\mu-\frac{1}{2}\log\log p_i\right)^{r-i}}{(r-i)!}
\cdot 
\frac{(\frac{1}{2}\log\log p_i)^{i-1}}{(i-1)!}
\left(1-\frac{\frac{1}{2}\log a_i}{\mu-\frac{1}{2}\log\log p_i+B}\right)^{r-i}
\\
\ll
\frac{x}{\log x}
\cdot
\frac{u^{-u}}{a_i^{1/2-\mu^{-1/3}}}
\sum_{\substack{p_i\equiv 1\bmod 4\\ \eqref{eq:pi}}}
\frac{1}{p_i}\cdot
\frac{\left(\mu-\frac{1}{2}\log\log p_i\right)^{r-i}}{(r-i)!}
\cdot 
\frac{(\frac{1}{2}\log\log p_i)^{i-1}}{(i-1)!}
\ll 
\frac{u^{-u}\Phi_r(x)}{a_i^{1/2-\mu^{-1/3}}}.
\end{multline*}
Summing over $\frac{1}{2}\sqrt{r}<i<\frac{1}{2}r$, 
the total number of such $n$ is 
\[\ll \Phi_r(x)\exp\left(-2(\log \mu)^{1-\kappa}\right)\sum_{\frac{1}{2}r^{1/2}<i<\frac{1}{2}r}\frac{1}{i}
\ll \Phi_r(x)\exp\left(-(\log \mu)^{1-\kappa}\right)
.\]

\section{Equidistribution of Legendre symbol matrices}
\label{sEqui}
We will use the two following propositions from Section 6 of Smith \cite{Smith}.

\begin{prop}
\label{cheb}
Suppose $L/\Q$ is Galois of degree $d$ and $K/\Q$ is an elementary abelian extension, and $\gcd(\Delta_L,\Delta_K)=1$. Let $K_0$ be a quadratic subfield of $K$ with maximal discriminant $|\Delta_{K_0}|$. Let $G:=\Gal(KL/\Q)$ is a $2$-group. Take $F:G\rightarrow [-1,1]$ to be a class function with average $0$ over $G$. Then there exists an absolute constant $c>0$ such that 
\begin{multline*}
\sum_{p\leq x} F\left(\leg{KL/\Q}{p}\right) \log p\\
\ll
x^{\beta}|G|
+x|G|(d^2\log |x\Delta_{K_0}\Delta_L|)^4
\exp\left(\frac{-cd^{-4}\log x}{\sqrt{\log x}+3d\log |\Delta_{K_0}\Delta_L|}\right)
\end{multline*}
for $x\geq 3$, where $\beta$ is the maximal real zero of any Artin $L$-function defined for $G$.
\end{prop}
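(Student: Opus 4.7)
The plan is to reduce the problem to bounding individual Artin $L$-function sums via a character decomposition, then invoke an effective Chebotarev-type bound. Since $G = \Gal(KL/\Q)$ is a finite $2$-group and $F$ is a class function with $\sum_{g \in G} F(g) = 0$, I would first expand
\[
F = \sum_{\chi \ne 1_G} c_\chi \chi,
\]
where $\chi$ runs over the non-trivial irreducible characters of $G$ and $|c_\chi| \le 1$ by orthogonality together with $\|F\|_\infty \le 1$. The trivial character is absent precisely because $F$ averages to zero. Since there are at most $|G|$ such $\chi$ and each has dimension $\le \sqrt{|G|} \le d$, it suffices to prove a bound of the claimed shape for a single
\[
S_\chi(x) := \sum_{p \le x} \chi\bigl(\tfrac{KL/\Q}{p}\bigr) \log p
\]
and then sum trivially over $\chi$, picking up the factor of $|G|$.

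For each such $\chi$, I would apply the explicit formula for the Artin $L$-function $L(s, \chi, KL/\Q)$, truncated at height $T$ via a Perron-type contour shift. This gives
\[
S_\chi(x) = - \sum_{\rho : L(\rho, \chi) = 0,\ |\Imag \rho| \le T} \frac{x^\rho}{\rho} + E(x, T, \chi),
\]
where the error $E$ is a polynomial in $\log x$, $\log|\Delta_{K_0}\Delta_L|$ and $d$ coming from derivative bounds at $s = 0, 1$ and the archimedean gamma factors; this is the source of the $(d^2 \log|x\Delta_{K_0}\Delta_L|)^4$-type polynomial loss. The Artin conductor of $\chi$ divides $\Delta_{KL}$, and the hypothesis $\gcd(\Delta_L, \Delta_K) = 1$ together with the conductor-discriminant formula (applied to the elementary abelian tower $K/\Q$, where each quadratic subcharacter has conductor dividing $\Delta_{K_0}$) bounds $\log \mathfrak{f}(\chi)$ by $O(d \log|\Delta_{K_0} \Delta_L|)$.

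For the zero sum, I would invoke the Lagarias--Odlyzko--Weiss effective zero-free region for Artin $L$-functions of Galois representations of a $d$-dimensional extension: all non-trivial zeros lie in
\[
\Real s \ge 1 - \frac{c' d^{-4}}{\log(\mathfrak{f}(\chi) (|\Imag s|+2))},
\]
with the possible exception of a single real Siegel-type zero $\beta$. The exceptional zero contributes the $x^\beta$ term; the remaining zeros, via a standard log-free zero density estimate, contribute the exponential-decay term with exponent $-c d^{-4} \log x / (\sqrt{\log x} + 3 d \log|\Delta_{K_0} \Delta_L|)$. Optimising the truncation height $T$ balances the two contributions.

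The main obstacle is the uniformity in $d$ and in the conductor: one has to combine Weiss's refinement with the correct handling of the elementary abelian factor $K$ to see that the discriminant appears through the maximal quadratic subfield $K_0$ rather than through the full $\Delta_K$, and to extract the correct power $d^{-4}$ in the exponent. Modulo these refinements (which are exactly the content of Weiss's theorem and explain why Smith states the proposition in this form), everything reduces to the explicit formula bookkeeping outlined above.
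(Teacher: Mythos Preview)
The paper does not actually prove this proposition: its entire proof reads ``This quickly follows from the Chebotarev Density Theorem, see Proposition~6.5 in Smith \cite{Smith}.'' So you are supplying far more than the paper does, and your outline is broadly the standard route (character expansion, explicit formula, Lagarias--Odlyzko/Weiss zero-free region with a possible exceptional real zero, optimisation of the truncation height) by which such effective Chebotarev bounds are established.

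A couple of small points worth tightening. First, your bound $|c_\chi|\le 1$ from orthogonality and $\|F\|_\infty\le 1$ gives only $|c_\chi|\le \chi(1)$ when $G$ is non-abelian; this is harmless since $\chi(1)\le d$ and the loss is absorbed into the polynomial factor, but the statement as written is not quite right. (In the paper's applications $L$ is quadratic and $G$ is abelian, so the issue never arises.) Second, your explanation of why only $\Delta_{K_0}$ appears rather than the full $\Delta_K$ is slightly off: a general character of the elementary abelian group $\Gal(K/\Q)$ can have conductor as large as a product of several quadratic conductors, not just one. The correct mechanism is that $\gcd(\Delta_K,\Delta_L)=1$ forces $G\cong \Gal(K/\Q)\times\Gal(L/\Q)$, so each irreducible $\chi$ factors as $\chi_K\cdot\chi_L$; the Artin $L$-function $L(s,\chi)$ is then the twist of $L(s,\chi_L,L/\Q)$ by the Dirichlet character $\chi_K$, and one applies the zero-free region to this fixed-degree-$d$ $L$-function twisted by a quadratic character of conductor at most $|\Delta_{K_0}|$ (any $\chi_K$ is itself a single quadratic Dirichlet character, cutting out one quadratic subfield of $K$, hence of conductor at most $|\Delta_{K_0}|$). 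That is precisely why the bound depends on $|\Delta_{K_0}\Delta_L|$ and $d$ rather than on $|\Delta_K|$ or $|G|$ in the exponent.
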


\begin{proof}
This quickly follows from the Chebotarev Density Theorem, see Proposition~6.5 in Smith \cite{Smith}.
\end{proof}

\begin{prop}
\label{larges}
Let $X_1$ and $X_2$ be disjoint sets of odd primes bounded by $t_1$ and $t_2$ respectively. Then for any $\epsilon>0$, we have
\[\sum_{x_1\in X_1}\left|\sum_{x_2\in X_2}\leg{x_1}{x_2}\right|\ll_{\epsilon}
t_1t_2^{3/4+\epsilon}+t_2t_1^{3/4+\epsilon}.\]
\end{prop}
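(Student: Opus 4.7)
The plan is to establish the proposition via Cauchy--Schwarz followed by Heath-Brown's large-sieve inequality for real characters, which in fact delivers even more than the stated bound. Write $S(x_1) := \sum_{x_2 \in X_2} \leg{x_1}{x_2}$, so that the target quantity is $\sum_{x_1 \in X_1} |S(x_1)|$.

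First I would apply Cauchy--Schwarz to pass to the mean square:
\[
\sum_{x_1 \in X_1} |S(x_1)| \leq |X_1|^{1/2} \biggl(\sum_{x_1 \in X_1} |S(x_1)|^2\biggr)^{1/2}.
\]
Next I would use quadratic reciprocity after partitioning each of $X_1$ and $X_2$ into their residue classes modulo $4$. On each of the $O(1)$ resulting pieces, for distinct odd primes one has $\leg{x_1}{x_2} = \epsilon_0 \leg{x_2}{x_1}$ with a fixed sign $\epsilon_0 \in \{\pm 1\}$ depending only on those residue classes, so after adding the pieces back together it suffices to bound the mean square
\[
\sum_{x_1 \in X_1} \biggl|\sum_{x_2 \in X_2} \leg{x_2}{x_1}\biggr|^2.
\]

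The key analytic input is Heath-Brown's quadratic large sieve, which states that for complex coefficients $(a_n)$ one has
\[
\sum_{m \leq M}^{\flat} \biggl|\sum_{n \leq N}^{\flat} a_n \leg{n}{m}\biggr|^2 \ll_{\epsilon} (MN)^{\epsilon}(M+N) \sum_{n}^{\flat} |a_n|^2,
\]
where $\sum^{\flat}$ restricts the summation to odd squarefree integers. I would apply this with $M = t_1$, $N = t_2$ and $a_n = \mathbf{1}_{X_2}(n)$, noting that $X_1$ consists of odd primes (hence odd squarefree integers) at most $t_1$, to obtain
\[
\sum_{x_1 \in X_1} \biggl|\sum_{x_2 \in X_2} \leg{x_2}{x_1}\biggr|^2 \ll_{\epsilon} (t_1 t_2)^{\epsilon}(t_1 + t_2) |X_2|.
\]

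Plugging this back into Cauchy--Schwarz and using the trivial bounds $|X_1| \leq t_1$ and $|X_2| \leq t_2$ yields
\[
\sum_{x_1 \in X_1} |S(x_1)| \ll_{\epsilon} (t_1 t_2)^{\epsilon} \, t_1^{1/2} (t_1 + t_2)^{1/2} t_2^{1/2} \ll_{\epsilon} \bigl(t_1 t_2^{1/2} + t_2 t_1^{1/2}\bigr)(t_1 t_2)^{\epsilon},
\]
which, since $t_i^{1/2} \leq t_i^{3/4}$, is dominated by the right-hand side of the proposition. The essentially nontrivial ingredient is Heath-Brown's large sieve; the only real care needed is in the reciprocity step, where the sign depends on $x_1, x_2 \bmod 4$ but the splitting into residue classes handles this cleanly. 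If one wishes to avoid invoking the large sieve, an alternative is a fourth-moment argument: expand $\sum_{x_1} |S(x_1)|^4$ into $\sum_{y \in X_2^4} \sum_{x_1 \in X_1} \leg{x_1}{y_1 y_2 y_3 y_4}$, control the $O(|X_2|^2)$ diagonal tuples whose product is a square, and bound the off-diagonal inner sums $\sum_{x_1 \in X_1} \leg{x_1}{m}$ using Polya--Vinogradov applied to the squarefree part of $m$, together with Hölder's inequality. This more elementary route recovers precisely the $3/4$ exponent in the statement, and handling the off-diagonal bound will be the main technical obstacle in that approach.
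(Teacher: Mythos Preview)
Your approach is essentially the paper's: both reduce the estimate to a large-sieve inequality for real characters (the paper cites Jutila's Lemma~3 via Smith, you invoke Heath-Brown's quadratic large sieve), preceded by Cauchy--Schwarz. The reciprocity manoeuvre and the application of the large sieve are fine.

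There is, however, a small but genuine slip in your last deduction. From Heath-Brown you arrive at
\[
\sum_{x_1\in X_1}|S(x_1)|\ \ll_{\epsilon}\ (t_1t_2)^{\epsilon}\bigl(t_1t_2^{1/2}+t_2t_1^{1/2}\bigr),
\]
and then assert this is dominated by $t_1t_2^{3/4+\epsilon}+t_2t_1^{3/4+\epsilon}$ because $t_i^{1/2}\le t_i^{3/4}$. That comparison ignores the factor $t_j^{\epsilon}$ sitting on the \emph{other} variable. Concretely, fix $t_2$ and send $t_1\to\infty$: your bound is of order $t_1^{1+\epsilon}$, whereas the proposition's right-hand side is of order $t_1$, so no choice of the Heath-Brown parameter recovers the stated inequality uniformly. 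The defect is purely in the placement of the $\epsilon$-loss; it is harmless for every use of the proposition in the paper, and one may simply replace the target by $(t_1t_2)^{\epsilon}(t_1t_2^{3/4}+t_2t_1^{3/4})$, which is what your argument actually proves and which suffices throughout. If you want the exact shape as stated, you should appeal to the specific form of Jutila's estimate (which is what the paper does) rather than the symmetric Heath-Brown bound.
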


\begin{proof}
This is an easy consequence of the large sieve inequality stated in the work of Jutila \cite[Lemma~3]{Jutila}, see Proposition~6.6 in Smith \cite{Smith}.
\end{proof}

We shall not work with all squarefree integers simultaneously, but instead work with more restricted sets of squarefree integers that have extra combinatorial structure. In our next definition we define this combinatorial structure, which we call preboxes.

\begin{mydef}
Take a sequence of real numbers
\[
0<s_1<t_1<s_2<t_2<\dots<s_r<t_r.
\]
Take $P, X_1,\dots, X_r$ to be disjoint sets of primes not congruent to $3 \bmod 4$ so that $X_i\subset (s_i,t_i)$. Define $X:=X_1\times\dots\times X_r$. We call the pair $(X, P)$ a prebox.
\end{mydef}

The goal of this section is to prove a weak equidistribution statement regarding matrices of Jacobi symbols associated to each $x \in X$. To make sense of this, we first need to define how we attach a matrix of Jacobi symbols to each $x \in X$, which we shall do now. We will often implicitly identify $\mathbb{F}_2$ with $\{\pm 1\}$ in this section.

\begin{mydef}
\label{dMa}
Let $(X, P)$ be a prebox. Take $\M\subseteq \{(i,j):1\leq i< j\leq r\}$ and $\mathcal{N}\subseteq P\times [r]$. Define $M: X\rightarrow \FF_2^{|\M\sqcup\mathcal{N}|}$ as follows
\[
M(x_1,\dots,x_r):\M\sqcup\mathcal{N}\rightarrow\{\pm 1\}\qquad
M(x_1,\dots,x_r)(\mathbf{m})=
\begin{cases}
\leg{x_i}{x_j}&\text{ if }\mathbf{m}=(i,j)\in\M\\
\leg{p}{x_j}&\text{ if }\mathbf{m}=(p,j)\in\mathcal{N}.
\end{cases}
\]
Denote $\mathcal{N}_{j}:={\{(p,j)\in\mathcal{N}\}}$.
Let $M_j:X_j\rightarrow \FF_2^{|\mathcal{N}_{j}|}$ be the function defined by
\[M_j(x_j):\mathcal{N}_{j}\rightarrow\{\pm 1\}\qquad
M_j(x_j)(p,j)=
\leg{p}{x_j}.
\]
For any $a:\M\sqcup\mathcal{N}\rightarrow\{\pm 1\}$, define 
\[
X(a):=\{x\in X:M(x)=a\},
\]
and $X_j(a,P):=\{x_j\in X_j: M_j(x_j)=a\restriction_{\mathcal{N}_{j}}\}$.
\end{mydef}

Ideally, we would like to prove that $X(a)$ is of the expected size, that is
\[
|X(a)| = \frac{|X|}{2^{|\M| + |\mathcal{N}|}}.
\]
Instead we shall prove a weaker equidistribution statement that allows for permutations of the first few columns.

\begin{mydef}
Let $\mathcal{P}(r)$ denote the set of permutations of $[r]$.
For any $\sigma\in\mathcal{P}(r)$, any prebox $(X,P)$ and any $a:\M\sqcup\mathcal{N}\rightarrow\{\pm 1\}$,
define
\[
X(\sigma,a)=\left\{x\in X:
M(\sigma (x))=a\right\},
\]
where $\sigma(x)=\sigma(x_1,\dots,x_r)=(x_{\sigma(1)},\dots,x_{\sigma(r)})$.
\end{mydef}

Finally, there is the well-known problem of Siegel zeroes that we need to take care of. This prompts the following definition.

\begin{mydef}
For $c>0$, take $\mathcal{S}(c)$ to be the set of squarefree integers $d$ so that 
\[
L(s,\chi_d)=0\text{ for some } 1-\frac{c}{\log d}\leq s\leq 1.
\]
List the elements in $\mathcal{S}(c)$ as $d_1<d_2<\cdots$. By Landau's theorem, fix an absolute $c$ sufficiently small so that
$d_i^2\leq |d_{i+1}|$ for all $i\geq 1$. We say that a prebox $(X,P)$ is Siegel-less above $t$ if the following holds
\[
\left\{x\prod_{p\in \tilde{P}}p>t: x\in X,\ \tilde{P}\subseteq P\right\}\cap \mathcal{S}(c)=\varnothing.
\]
\end{mydef}

We are now ready to prove our first proposition, which shows that $X(a)$ is of the expected size for sufficiently regular prebox $(X, P)$ and sufficiently nice $\M$ and $\mathcal{N}$. It is directly based on Proposition~6.3 in Smith \cite{Smith}.

\begin{prop}
\label{6.3}
Fix positive constants $c_1,\dots, c_6$ such that
$c_2c_3+2c_4+c_5< \frac{1}{4}$, and $c_6>3$. 
Take $\delta>0$ satisfying $2\delta <\frac{1}{4}-c_2c_3-2c_4-c_5$, then the following holds for any large enough $D_1$. Take $1\leq k\leq r$. Suppose $\M\subseteq \{(i,j):1\leq i< j\leq r\}$ and $\mathcal{N}\subseteq P\times\{k+1,\dots,r\}$. Let $a:\M\sqcup\mathcal{N}\rightarrow\{\pm 1\}$.
Let $(X, P)$ be a prebox with parameters $D_1<s_1<t_1<s_2<t_2<\dots<s_r<t_r$ such that 
\[
X_j:=
\{x_j\in (s_j,t_j)\text{ prime}: x_j\equiv 1\bmod 4,\ M_j(x_j)=a\restriction_{\mathcal{N}_{j}}\}\quad\text{if }j>k.
\]
Assume 
\begin{enumerate}
\item $(X,P)$ is Siegel-less above $D_1$;
\item $P\subseteq [t_1]$ and $|P|\leq \log t_i-i$ for all $1\leq i\leq r$;
\item $\log t_{k+1}>\max\{(\log t_1)^{c_6},D_1^{c_1}\}$ if $k<r$, and $\log t_k<t_1^{c_2}$;
\item 
$|X_i|\geq e^it_i(\log t_i)^{-c_3}$ for all $1\leq i\leq r$;
\item $r<D_1^{c_4}$;
\item for each $1\leq i\leq r$, $j_i:= i-1+\lfloor c_5 \log t_i\rfloor$ satisfy $j_1> k$, and
$\log t_{j_i}>(\log t_i)^{c_6}$ if $j_i\leq r$.
\end{enumerate}
Then
\[
\left| |X(a)|-2^{-|\M|}|X|\right|\leq t_1^{-\delta}\cdot 2^{-|\M|}|X|.
\]
\end{prop}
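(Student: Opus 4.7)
The plan is to mirror the structure of Smith's proof of \cite[Proposition~6.3]{Smith}, adapted for the Pell-type restriction that primes in $X_j$ are not congruent to $3$ modulo $4$. Since $X_j$ for $j>k$ already has the $\mathcal{N}$-conditions baked in, I would first expand only over $\mathcal{M}$ via the standard indicator decomposition:
\[
|X(a)| = \sum_{x \in X} \prod_{(i,j) \in \mathcal{M}} \frac{1 + a(i,j)(x_i/x_j)}{2} = 2^{-|\mathcal{M}|}\sum_{S \subseteq \mathcal{M}} \epsilon_S\, \Sigma_S,
\]
where $\Sigma_S := \sum_{x \in X} \prod_{(i,j)\in S}(x_i/x_j)$ and $\epsilon_S \in \{\pm 1\}$. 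The term $S = \varnothing$ contributes the desired main term $|X|/2^{|\mathcal{M}|}$; it remains to show that $\sum_{S\neq \varnothing}|\Sigma_S| \leq t_1^{-\delta}|X|$.

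For non-empty $S$, I would isolate the largest column: set $j^* := \max\{j : (i,j)\in S\}$ and $m_S := \prod_{i:(i,j^*)\in S} x_i$. Since no $(i,j) \in S$ has $j>j^*$, only factors with $j = j^*$ involve $x_{j^*}$, so the sum factors as $\Sigma_S = \sum_{(x_l)_{l\neq j^*}} \prod_{(i,j)\in S,\, j<j^*}(x_i/x_j)\cdot T_S(x)$ with $T_S(x) := \sum_{x_{j^*}\in X_{j^*}}(m_S/x_{j^*})$. When $j^* > k$ I would unfold the $\mathcal{N}$-conditions defining $X_{j^*}$ by expanding $\mathbf{1}(M_{j^*}(x_{j^*}) = a\restriction_{\mathcal{N}_{j^*}}) = 2^{-|\mathcal{N}_{j^*}|}\sum_T \epsilon'_T \prod_{(q,j^*)\in T}(q/x_{j^*})$, reducing $T_S(x)$ to a sum over primes $p \equiv 1\pmod 4$ in $(s_{j^*},t_{j^*})$ of $(m_S \prod q/p)$. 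Each such inner sum is bounded by Proposition~\ref{cheb} applied to the quadratic character of the squarefree kernel of $m_S \prod q$, with partial summation stripping off $\log p$ and the Siegel-less hypothesis eliminating the $x^\beta$ contribution; hypothesis (ii) controls the conductor discriminants that appear. When the resulting Chebotarev saving is too weak, typically when $j^*$ is small or $m_S$ involves only coordinates $x_i$ with $i$ far below $j^*$, I would instead apply the large sieve (Proposition~\ref{larges}) in the $(X_i, X_{j^*})$-pairing, using the gap $\log t_{j_i} > (\log t_i)^{c_6}$ from hypothesis (vi) to produce a power saving.

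Iterating this peel-off column by column, each non-empty $\Sigma_S$ acquires a saving of size $t_1^{-\eta}$ for some explicit $\eta$ depending on $c_2, c_3, c_5$ and $c_6$. The number of non-empty $S$ is at most $2^{|\mathcal{M}|} \leq 2^{r(r-1)/2}$ with $r \leq D_1^{c_4}$, and $|\mathcal{N}|$ is controlled by hypothesis (ii); for $D_1$ fixed sufficiently large and $t_1$ large relative to $D_1$, the combinatorial loss $2^{|\mathcal{M}|+|\mathcal{N}|}$ is absorbed, and the condition $c_2 c_3 + 2c_4 + c_5 < 1/4$ together with $2\delta < 1/4 - c_2 c_3 - 2c_4 - c_5$ balances the accumulated losses against the saving to yield the claimed bound $t_1^{-\delta} \cdot 2^{-|\mathcal{M}|}|X|$.

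The main obstacle will be the interplay between algebraic cancellation and analytic estimate in the reduction step: for each non-empty $S$ one must verify that outside a negligible family of subsidiary characters $T$ the quadratic character attached to $m_S \prod_{(q,j^*) \in T} q$ is non-trivial (so that Chebotarev actually provides a power saving), while any $T$ producing a trivial character must be reabsorbed via the large-sieve route or shown to contribute only to the main term. Uniform handling of these two regimes across all $S$ is what forces the precise numerical inequality $c_2 c_3 + 2c_4 + c_5 < 1/4$ appearing in the hypotheses, and correctly tracking how the Siegel-less condition and the two-sided restrictions from $P$ interact in Step 3 is likely to be the most delicate bookkeeping.
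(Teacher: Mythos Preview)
Your direct-expansion strategy has a genuine gap: the combinatorial loss $2^{|\mathcal{M}|}$ cannot be absorbed under the stated hypotheses. After expanding, you need $\sum_{S\neq\varnothing}|\Sigma_S|\leq t_1^{-\delta}|X|$, and your per-term saving is a fixed power $t_1^{-\eta}$ (at best $\exp(-(\log t_1)^{c_6/3})$ from a single Chebotarev peel). Summing over the $2^{|\mathcal{M}|}-1$ non-empty $S$ then requires $2^{|\mathcal{M}|}\leq t_1^{\eta-\delta}$. But $|\mathcal{M}|$ can be of order $r^2$ with $r$ as large as $D_1^{c_4}$, while the hypotheses only give $t_1>D_1$; there is no assumption forcing $\log t_1$ to dominate $D_1^{2c_4}$. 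Your phrase ``for $D_1$ fixed sufficiently large and $t_1$ large relative to $D_1$'' is not something the proposition grants you: it must hold for \emph{every} prebox satisfying (i)--(vi), including those with $t_1$ barely above $D_1$ and $r$ close to $D_1^{c_4}$. Iterating the peel column-by-column does not rescue this, since a single $S$ may touch only one column, so the compounded saving over columns of $S$ can still be just one Chebotarev factor.

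The paper avoids this blow-up entirely by arguing by induction on $r$ rather than expanding over subsets of $\mathcal{M}$. One fixes $x_1\in X_1$ and shows that for each $j$ the conditional set $X_j(a,x_1)=\{x_j\in X_j:(x_1/x_j)=a(1,j)\}$ has size $\tfrac12|X_j|$ up to a small error: for $j>k$ this is Chebotarev (Proposition~\ref{cheb}), and for $j\leq k$ the large sieve (Proposition~\ref{larges}) shows it holds for all but at most $kt_1^{-B_1}|X_1|$ exceptional $x_1$. The exceptional $x_1$ are handled crudely, and for the good $x_1$ one applies the inductive hypothesis to $X_2(a,x_1)\times\cdots\times X_r(a,x_1)$. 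The error then accumulates \emph{additively} as $r\cdot t_1^{-\kappa}$ with $\kappa=c_4+\delta$, and the single factor $r<D_1^{c_4}<t_1^{c_4}$ is what the constraint $c_2c_3+2c_4+c_5<\tfrac14$ is calibrated to absorb---not an exponential $2^{r^2/2}$. This is the missing structural idea in your proposal.
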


\begin{proof}
Let $\kappa:=c_4+\delta$.
Since $r<D_1^{c_4}$, 
it suffices to show that
\[
\left| |X(a)|-2^{-|\M|}|X|\right|\leq r t_1^{-\kappa}\cdot 2^{-|\M|}|X|.
\]
We proceed by induction on $r$. Define 
\[
X_j(a,x_1):=\left\{x_j\in X_j: \leg{x_1}{x_j}=a(1,j)\text{ if }(1,j)\in\M\right\}.
\]

First consider $(1,j)\in \M$ where $j> k$. Apply Proposition~\ref{cheb} to 
\[
K=\Q(\sqrt{-1},\sqrt{p}:p\in P), \quad L=\Q(\sqrt{x_1})
\]
and 
\[
F:\sigma\mapsto
\begin{cases}
1-2^{-|P|-1} &\text{if } \sigma=\leg{KL/\Q}{x_j}\text{ for some }x_j\in X_j(a,x_1),\\
\hfil -2^{-|P|-1} &\text{otherwise.}
\end{cases}
\]
Notice that $\leg{KL/\Q}{x_j}$ is independent of the choice $x_j\in X_j(a,x_1)$.
By Siegel's theorem, for $D_1$ sufficiently large, we have $1-\beta>D_1^{-c_1/6}$ if $\beta$ is an exceptional real zero of $L(s,\chi_d)$ with $|d|<D_1$. Then 
\[
t_j^{\beta}
< t_j\exp\left(-(\log t_j)^{5/6}\right).
\]
Also 
\[
\log |\Delta_{K_0}\Delta_L|\ll |P|\log t_1 \leq (\log t_1)^2\ll(\log t_j)^{\frac{2}{c_6}}
\]
and 
\[
|\Gal(KL/\Q)|\leq 2^{1+\log t_1}<t_1<\exp\left((\log t_j)^{\frac{1}{c_6}}\right).
\]
Since $c_6>3$, by Proposition~\ref{cheb} for any $u\leq t_j$, we have
\[\left|
\sum_{p\leq u}F\left(\leg{KL/\Q}{p}\right) \log p\right|\leq t_j\exp\left(-(\log t_j)^{1/3}\right).
\]
Then by partial summation
\[\left|\sum_{p\leq t_j}F\left(\leg{KL/\Q}{p}\right)\right|
\leq t_j\exp\left(-(\log t_j)^{1/3}\right).
\]
Combining with similar estimates for $s_j$ and over the field $K/\Q$, we have
\[
\left||X_j(a,x_1)|-\frac{1}{2}|X_j|\right|\leq 4t_j\exp\left(-(\log t_j)^{1/3}\right)<t_1^{-1}|X_j|.
\]

Next consider $(1,j)\in \M$ where $j\leq k$. 
Fix a positive constant $\epsilon$ such that $2\epsilon<\frac{1}{4}-c_2c_3-c_5-2\kappa$. The large sieve in Proposition~\ref{larges} gives
\[
\sum_{x_1\in X_1}\left|\sum_{x_j\in X_j}\leg{x_1}{x_j}\right|\ll_{\epsilon}
t_jt_1^{3/4+\epsilon}.
\]
Since
\[|X_j(a,x_1)|=\frac{1}{2}\sum_{x_j\in X_j}\left(a(1,j)\leg{x_1}{x_j}+1\right)
=\frac{a(1,j)}{2}\sum_{x_j\in X_j}\leg{x_1}{x_j}+\frac{1}{2}|X_j|,\]
for sufficiently large $D_1$, we have 
\[
\sum_{x_1\in X_1}\left|
|X_j(a,x_1)|
-\frac{1}{2}|X_j|
\right|
=\frac{1}{2}\sum_{x_1\in X_1}\left|\sum_{x_j\in X_j}\leg{x_1}{x_j}\right|
\leq t_1^{-\frac{1}{4}+c_2c_3+\epsilon}|X_1||X_j|.\]
Let $B_1:=c_5+\kappa$ and $B_2:=\kappa+\epsilon$, then $B_1+B_2< \frac{1}{4}-c_2c_3-\epsilon$. We deduce that
\[\left|
|X_j(a,x_1)|-\frac{1}{2}|X_j|
\right|<t_1^{-B_2}|X_j| 
\text{ for all }(1,j)\in\M\text{ and }j\leq k
\] 
holds for $x_1\in X$
with at most $kt_1^{-B_1}|X_1|$ exceptions. Call the set of exceptions $X_1^{\text{bad}}(a)$.

We bound the size of the set of exceptions $X^{\text{bad}}(a)=X(a)\cap \pi_1^{-1}(X_1^{\text{bad}}(a))$ in $X$.
First fix some $x_1\in X_1$ and move $x_1$ to $P$. Apply the induction hypothesis to
\[X_2\times X_3\times \dots \times X_k\times X_{k+1}(a,x_1)\times\dots\times X_{r}(a,x_1).\]
Then
$|X(a)\cap \pi_1^{-1}(x_1)|\leq 2^{-|\M|+k+1}\frac{|X|}{|X_1|}$,
so 
\[
|X^{\text{bad}}(a)|
\leq 2^{k+1}k t_1^{-B_1 }\cdot 2^{-|\M|}|X|
<2^{j_1+1}r t_1^{-B_1 }\cdot 2^{-|\M|}|X|
<2r t_1^{-c_5(1-\log 2)-\kappa}\cdot 2^{-|\M|}|X|
\]
which fits into the error term. For $x_1\notin X^{\text{bad}}(a)$, we look at 
\[X_2(a,x_1)\times\dots\times X_r(a,x_1),\]
which has size between $2^{-|\M|}\frac{|X|}{|X_1|}(1\pm (r-1)t_1^{-\kappa})$ by the induction hypothesis. Then
\begin{align*}
|X(a)\setminus X^{\text{bad}}(a)|
&=\sum_{x_1\in X_1\setminus X_1^{\text{bad}}(a)}|X(a)\cap \pi_1^{-1}(x_1)| \\
&=\sum_{x_1\in X_1\setminus X_1^{\text{bad}}(a)}|(X_2(a,x_1)\times\dots\times X_r(a,x_1))(a)|\end{align*}
which lies between 
\[\left(1\pm t_1^{-1}\right)^r\left(1\pm t_1^{-B_2}\right)^k
\left(1
\pm(r-1)t_1^{-\kappa}\right)\cdot 
2^{-|\M|}|X|
.\]
Since $r<t_1^{c_4}<t_1^{\kappa}$, $1-c_4>\kappa$ and $B_2>\kappa$,  we have
\begin{multline*}
\left(1+ t_1^{-1}\right)^r\left(1+ t_1^{-B_2}\right)^k
\frac{1
+(r-1)t_1^{-\kappa}}{1
+ rt_1^{-\kappa}}
=
\left(1+ t_1^{-1}\right)^r\left(1+ t_1^{-B_2}\right)^k
\left(1-
\frac{
t_1^{-\kappa}}{1
+ rt_1^{-\kappa}}\right)\\
<\exp\left(rt_1^{-1}+kt_1^{-B_2}-\frac{1}{2}t_1^{-\kappa}\right)<
\exp\left(t_1^{-(1-c_4)}
+c_5t_1^{-B_2}\log t_1
-\frac{1}{2}t_1^{-\kappa}\right)
<1
\end{multline*}
and similarly
\begin{multline*}
\left(1- t_1^{-1}\right)^r\left(1- t_1^{-B_2}\right)^k
\frac{1
+(r-1)t_1^{-\kappa}}{1
- rt_1^{-\kappa}}
=
\left(1- t_1^{-1}\right)^r\left(1- t_1^{-B_2}\right)^k
\left(1+
\frac{
t_1^{-\kappa}}{1
- rt_1^{-\kappa}}\right)\\
>\exp\left(-rt_1^{-1}-kt_1^{-B_2}+\frac{1}{2}t_1^{-\kappa}\right)>
\exp\left(-t_1^{-(1-c_4)}
-c_5t_1^{-B_2}\log t_1
+\frac{1}{2}t_1^{-\kappa}\right)
>1.
\end{multline*}
This completes the inductive step.
\end{proof}

The condition $\mathcal{N}\subseteq P\times\{k+1,\dots,r\}$ in Proposition~\ref{6.3} turns out to be too restrictive for us. It is however not so straightforward to remove this condition. Hence we shall only prove a weaker equidistribution statement that allows for permutations of the first few columns. This weaker equidistribution statement will fall as a consequence of Proposition~\ref{6.3} and the following combinatorial proposition, which is Proposition~6.7 of Smith \cite{Smith}.

\begin{prop}
\label{6.7}
Let $(X,P)$ be a prebox. 
Let $\M= \{(i,j):1\leq i< j\leq r\}$ and $\mathcal{N}= P\times [r]$.
Take $0\leq k_0\leq k_1\leq k_2\leq r$ so that
\[2^{|P|+k_0+1}k_1^2<k_2.\]
Let $\sigma\in\mathcal{P}(r)$.
Define
\[
S(\sigma):=
\left\{(i,j)\in\M:(\sigma(i),\sigma(j))\in ([k_0]\times[k_1])\cup ([k_1]\times[k_0])\right\}
\sqcup \left\{(p,j)\in\mathcal{N}:\sigma(j)\in [k_1] \right\}.\]
Let $m:=|S(\sigma)|=k_1|P|+\frac{1}{2}k_0(k_0-1)+k_0(k_1-k_0)$. If $a:\M\sqcup\mathcal{N}\rightarrow\{\pm 1\}$, we put
\[
X_S(\sigma,a):=\left\{x\in X:
M(\sigma (x))\restriction_{S(\sigma)}=a\restriction_{S(\sigma)}\right\}.
\]
For any $x\in X$, define 
\[
W(x,a):=\{\sigma\in\mathcal{P}(k_2): x\in X_S(\sigma,a)\}=
\{\sigma\in\mathcal{P}(k_2): M(\sigma(x))\restriction_{S(\sigma)}=a\restriction_{S(\sigma)}\}.\]
Then we have
\[
\sum_{a\in\FF_2^{\M\sqcup\mathcal{N}}}\left|
|W(x,a)|-2^{-m}\cdot k_2!\right|
\leq \left(\frac{2^{|P|+k_0+1}}{k_2}\right)^{1/2}k_1\cdot 2^{-m}\cdot k_2!.
\]
\end{prop}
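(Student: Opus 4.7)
The plan is to prove Proposition~\ref{6.7} by a second moment argument, exploiting the observation that $S(\sigma)$ depends on $\sigma$ only through the pair $(A(\sigma), B(\sigma)) := (\sigma^{-1}([k_0]), \sigma^{-1}([k_1]))$. Setting $f(a) := |W(x,a)| - 2^{-m} k_2!$ and applying Cauchy--Schwarz, it suffices to control the second moment $\sum_a f(a)^2$. Expanding $|W(x,a)| = \sum_{\sigma \in \mathcal{P}(k_2)} \mathbf{1}[a\restriction_{S(\sigma)} = M(\sigma(x))\restriction_{S(\sigma)}]$ and swapping the order of summation yields the identity
\[
\sum_a f(a)^2 \;=\; 2^{|\M \sqcup \mathcal{N}| - 2m} \!\!\!\sum_{(\sigma_1,\sigma_2) \in \mathcal{P}(k_2)^2}\!\! \left(\mathbf{1}[\mathrm{compat}] \cdot 2^{|S(\sigma_1) \cap S(\sigma_2)|} - 1\right),
\]
where ``compat'' denotes that $M(\sigma_1(x))$ and $M(\sigma_2(x))$ agree on $S(\sigma_1) \cap S(\sigma_2)$.

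Next, I would stratify pairs $(\sigma_1, \sigma_2)$ by $b := |B(\sigma_1) \cap B(\sigma_2)|$. If $b = 0$ then $S(\sigma_1) \cap S(\sigma_2) = \emptyset$, since both its prime part $\{(p,j) \in \mathcal{N} : j \in B(\sigma_1) \cap B(\sigma_2)\}$ and its Jacobi part (requiring $\{i,j\} \subseteq B(\sigma_1) \cap B(\sigma_2)$) need an index in $B(\sigma_1) \cap B(\sigma_2)$; hence the summand vanishes. For $b \geq 1$, a direct analysis gives
\[
|S(\sigma_1) \cap S(\sigma_2)| \leq (|P| + k_0)\,b,
\]
with the prime part contributing exactly $|P|\,b$ and the Jacobi part at most $k_0\,b$, because any qualifying pair $(i,j) \in \M$ must have at least one endpoint in $A(\sigma_1) \cap (B(\sigma_1) \cap B(\sigma_2))$, a set of size at most $k_0$.

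To count pairs $(\sigma_1, \sigma_2)$ with $|B(\sigma_1) \cap B(\sigma_2)| = b$, I would use the hypergeometric estimate
\[
\frac{\binom{k_1}{b}\binom{k_2-k_1}{k_1-b}}{\binom{k_2}{k_1}} \leq \left(\frac{k_1^2}{k_2}\right)^{\!b},
\]
together with the fact that each choice of $(A,B)$ refines into equally many permutations (namely $k_0!(k_1-k_0)!(k_2-k_1)!$), so the overlap statistics for $\sigma$ match those for $B$. This bounds the total count by $(k_2!)^2 (k_1^2/k_2)^b$, and hence
\[
\sum_{(\sigma_1,\sigma_2)} \left(\mathbf{1}[\mathrm{compat}] \cdot 2^{|S(\sigma_1) \cap S(\sigma_2)|} - 1\right) \;\leq\; (k_2!)^2 \sum_{b \geq 1} \left(\frac{2^{|P|+k_0}\, k_1^2}{k_2}\right)^{\!b}.
\]
The hypothesis $2^{|P|+k_0+1}\,k_1^2 < k_2$ forces the common ratio below $1/2$, so the geometric series is bounded by twice its first term, namely $(2^{|P|+k_0+1}\,k_1^2/k_2)(k_2!)^2$. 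Feeding this estimate back through Cauchy--Schwarz yields the claimed bound.

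The main obstacle I anticipate is the careful combinatorial bookkeeping in the overlap analysis: verifying the Jacobi-part bound $\leq k_0\,b$ with attention to the ordering constraint $i < j$, and aligning the hypergeometric estimate with the permutation-refinement symmetry so that the exponent $|P|+k_0+1$ comes out precisely. The fact that the summand vanishes whenever $B(\sigma_1) \cap B(\sigma_2) = \emptyset$ is the structural input that makes the whole argument work: it reduces a global concentration problem for $W(x,a)$ to a purely combinatorial estimate on how often two random $k_1$-subsets of $[k_2]$ intersect.
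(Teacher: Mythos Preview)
Your proposal is correct and follows essentially the same route as the paper's proof: reduce to a second-moment bound via Cauchy--Schwarz, expand $|W(x,a)|^2$ as a double sum over $(\sigma_1,\sigma_2)$, stratify by the overlap parameter $b=|\sigma_1^{-1}([k_1])\cap\sigma_2^{-1}([k_1])|$ (the paper calls this $d$), bound $|S(\sigma_1)\cap S(\sigma_2)|\le(|P|+k_0)b$, count pairs with given $b$ by $(k_2!)^2(k_1^2/k_2)^b$, and sum the resulting geometric series. Your framing via $A(\sigma),B(\sigma)$ and the exact variance identity (so that the $b=0$ term visibly cancels) is slightly cleaner than the paper's ``second moment minus first moment squared'' presentation, but the substance is identical.
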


\begin{proof}
Fix some $x\in X$ and write $W(a):=W(x,a)$.
We will show that
\[\sum_{a\in\FF_2^{\M\sqcup\mathcal{N}}}\left(|W(a)|-2^{-m}\cdot k_2!\right)^2
\leq \frac{2^{|P|+k_0+1} k_1^2}{k_2}\cdot 2^{-2m+|\M\sqcup\mathcal{N}|}(k_2!)^2
,\]
then the proposition follows from the Cauchy-Schwarz inequality.

The average of $|W(a)|$ is $2^{-m}\cdot k_2!$, since $|\mathcal{P}(k_2)|=k_2!$ and there are $m$ Legendre symbol conditions to satisfy.
Now 
\[
|W(a)|^2=|\{(\sigma_1,\sigma_2)\in\mathcal{P}(k_2)\times\mathcal{P}(k_2): M(\sigma_1(x))\restriction_{S(\sigma_1)}=a\restriction_{S(\sigma_1)},\ M(\sigma_2(x))\restriction_{S(\sigma_2)}=a\restriction_{S(\sigma_2)}\}|.
\]
We have $\sum_{a\in\FF_2^{\M\sqcup\mathcal{N}}}|W(a)|^2=\sum_{\sigma_1,\sigma_2\in\mathcal{P}(k_2)}|W(\sigma_1,\sigma_2)|$, where
\[
W(\sigma_1,\sigma_2):=\{a\in\FF_2^{\M\sqcup\mathcal{N}}: 
M(\sigma_1(x))\restriction_{S(\sigma_1)}=a\restriction_{S(\sigma_1)},\ M(\sigma_2(x))\restriction_{S(\sigma_2)}=a\restriction_{S(\sigma_2)}\}.
\]
We fix some $\sigma_1,\sigma_2\in\mathcal{P}(k_2)$ and bound $|W(\sigma_1,\sigma_2)|$.
Let 
$
d:=|\{i\in[k_2]:\sigma_1(i)\leq k_1,\ \sigma_2(i)\leq k_1\}|$.
We have
\begin{multline*}
|S(\sigma_1)\cap S(\sigma_2)|
=|\left\{(i,j)\in\M:
(\sigma_1(i),\sigma_1(j)),\ (\sigma_2(i),\sigma_2(j))\in ([k_0]\times[k_1])\cup ([k_1]\times[k_0])\right\}|
\\
+|\left\{(p,j)\in\mathcal{N}:\sigma_1(j),\ \sigma_2(j)\in [k_1] \right\}|
\leq d(|P|+k_0).
\end{multline*}
Therefore the conditions fixes at least 
$2m-d(|P|+k_0)$ arguments of $a\in W(\sigma_1,\sigma_2)$.
Then 
\[|W(\sigma_1,\sigma_2)|\leq 2^{-2m+d(|P|+k_0)+|\M\sqcup\mathcal{N}|}.\]

Given some $d\leq k_1$, we bound the number of $(\sigma_1,\sigma_2)\in\mathcal{P}(k_2)\times \mathcal{P}(k_2)$ that gives the same $d$. 
There are $\binom{k_2}{d}$ ways to pick the indices that map to $[k_1]$ under $\sigma_1$ and $\sigma_2$. 
Then there are at most $(\frac{k_1!}{(k_1-d)!}(k_2-d)!)^2$ ways to pick a pair of $(\sigma_1,\sigma_2)$ in such a way.
Hence the total number is bounded by 
\[\binom{k_2}{d}\left(\frac{k_1!}{(k_1-d)!}(k_2-d)!\right)^2\leq 
(k_2!)^2\left(\frac{k_1!}{(k_1-d)!}\right)^2\frac{ (k_2-d)!}{k_2!}
\leq (k_2!)^2\left(\frac{k_1^2}{k_2}\right)^d
.\]
The average of $|W(a)|^2$ is bounded by \[(k_2!)^2\sum_{d\geq 0}\left(\frac{k_1^2}{k_2}\right)^d \cdot 2^{-2m+d(|P|+k_0)}
=\frac{k_2}{k_2-2^{|P|+k_0}k_1^2}\cdot 2^{-2m}(k_2!)^2.
\]
Then the variance of $|W(a)|$ is bounded by
\begin{align*}
\frac{k_2}{k_2-2^{|P|+k_0}k_1^2}\cdot 2^{-2m}(k_2!)^2-(2^{-m}\cdot k_2!)^2
&=\frac{2^{|P|+k_0}k_1^2}{k_2-2^{|P|+k_0}k_1^2}\cdot 2^{-2m}(k_2!)^2 \\
&\leq \frac{2^{|P|+k_0+1}k_1^2}{k_2}\cdot 2^{-2m}(k_2!)^2
.\end{align*}
Multiplying by $2^{|\M\sqcup\mathcal{N}|}$ gives the required estimate.
\end{proof}

We are now ready to prove our weak equidistribution result for $|X(a)|$, which is very similar to Theorem~6.4 in Smith \cite{Smith}.

\begin{theorem}
\label{6.4}
Take positive constants $c_1,\dots,c_8$, where $c_2c_3+2c_4+c_5< \frac{1}{4}$, $c_6>3$ and $c_8<c_7<\frac{1}{2}$.
Let $(X, P)$ be a prebox and suppose that for all $1\leq j\leq r$
\[X_j:=
\{x_j\in (s_j,t_j)\text{ prime}: x_j\equiv 1\bmod 4\}.\]
The following holds for any large enough $A$ and $D_1$.
Choose integers $0\leq k_0\leq k_1<k_2<r$ and assume $t_{k_0+1}>D_1$ and $k_2>A$.
Assume 
\begin{enumerate}
\item $\log k_1< c_8\log k_2$;
\item $(|P|+k_0)\log 2< (1-2c_7)\log k_2$.
\end{enumerate}
To apply Proposition~\ref{6.3}, assume further
\begin{enumerate}
\item $(X,P)$ is Siegel-less above $D_1$;
\item $P\subseteq [t_{k_0+1}]$ and $|P|\leq \log t_i-i$ for all $k_0 < i\leq r$;
\item $\log t_{k_1+1}>\max\{(\log t_1)^{c_6},D_1^{c_1}\}$ if $k_1<r$, and $\log t_{k_1}<t_1^{c_2}$;
\item 
$|X_i|\geq 2^{|P|}e^{i}k_2^{c_7}t_i(\log t_i)^{-c_3}$ for all $k_0 < i\leq r$;
\item $r<D_1^{c_4}$;
\item for each $k_0 < i\leq r$, $j_i:= i-1+\lfloor c_5 \log t_i\rfloor$ satisfy $j_{k_0+1}> k_1$, and
$\log t_{j_i}>(\log t_i)^{c_6}$ if $j_i\leq r$.
\end{enumerate}

Take $\delta_1<c_7-c_8$ and $2\delta_2 <\frac{1}{4}-c_2c_3-3c_4-c_5$.
Then for any $\M$ and $\mathcal{N}$, we have
\[\sum_{a\in\FF_2^{\M\sqcup\mathcal{N}}}
\left|2^{-|\M\sqcup\mathcal{N}|}\cdot k_2!\cdot |X|-\sum_{\sigma\in\mathcal{P}(k_2)}|X(\sigma,a)|\right|
\leq (k_2^{-\delta_1}+t_{k_0 + 1}^{-\delta_2})\cdot k_2!\cdot |X|.\]
\end{theorem}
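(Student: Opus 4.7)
The plan is to combine Proposition~\ref{6.3} with Proposition~\ref{6.7}: the former handles the Legendre conditions involving coordinates beyond the first $k_0$ (via Chebotarev-type equidistribution), while the latter averages the remaining ``deep-inner'' conditions over $\sigma$.

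First, for each fixed $\sigma \in \mathcal{P}(k_2)$, I would change variables to $y_i := x_{\sigma(i)}$. Fixing the deep-inner coordinates $y_1, \ldots, y_{k_0}$, one forms the reduced prebox with variables $y_{k_0+1}, \ldots, y_r$ and enlarged prime set $P_{\text{new}} := P \cup \{y_1, \ldots, y_{k_0}\}$. A case analysis on where indices fall (in $[k_0]$, $(k_0,k_1]$, or $(k_1,r]$) shows that every condition in $\M \sqcup \mathcal{N}$ not purely determined by $(y_1, \ldots, y_{k_0})$ takes the form of either an $\M$-pair in the reduced prebox or a Legendre condition from $P_{\text{new}}$. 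Proposition~\ref{6.3} then applies with $k = 0$ to the reduced prebox and yields
\[
|X(\sigma,a)| = \bigl(1 + O(t_{k_0+1}^{-\delta_2})\bigr) \cdot 2^{-(|\M \sqcup \mathcal{N}| - |\text{deep}|)} \cdot N_{\text{deep}}(\sigma,a) \prod_{j > k_0}|X_{\sigma(j)}|,
\]
where $N_{\text{deep}}(\sigma,a)$ counts tuples $(y_1,\ldots,y_{k_0}) \in X_{\sigma(1)}\times\cdots\times X_{\sigma(k_0)}$ satisfying the conditions purely on $y_1,\ldots,y_{k_0}$. The hypotheses of Proposition~\ref{6.3} follow from (iii)--(viii) of the theorem: the Siegel-less condition is inherited; (iv) gives $|P_{\text{new}}| \leq \log t_i - i$; and the size bound (vi), with its $2^{|P|} k_2^{c_7}$ factor, absorbs the cost of enlarging $P$ by $k_0$ primes and of building in the Legendre restrictions.

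Next, switching the order of summation, I would apply Proposition~\ref{6.7} to the remaining inner average. The sum $\sum_\sigma N_{\text{deep}}(\sigma,a) \prod_{j > k_0} |X_{\sigma(j)}|$ rewrites (modulo routine reindexing) as $\sum_x |W(x,a)|$, where $W(x,a)$ is as in Proposition~\ref{6.7}. The combinatorial hypothesis $2^{|P|+k_0+1} k_1^2 < k_2$ is ensured by conditions (i) and (ii): one has $\log(2^{|P|+k_0+1} k_1^2) < (1-2c_7+2c_8)\log k_2 + \log 2$, which is strictly less than $\log k_2$ for $k_2 > A$ large, since $c_8 < c_7$. The same computation shows that the $L^1$-error from Proposition~\ref{6.7} is bounded by $\ll k_2^{-\delta_1} \cdot 2^{-m} k_2! |X|$ for any $\delta_1 < c_7 - c_8$. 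Combining the two estimates by the triangle inequality yields the claimed bound, with the two error sources contributing $k_2^{-\delta_1}$ and $t_{k_0+1}^{-\delta_2}$ respectively.

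The main obstacle will be the bookkeeping at the interface between the two steps. The set $S(\sigma)$ of Proposition~\ref{6.7} includes Legendre conditions on the middle block $y_{k_0+1}, \ldots, y_{k_1}$ coming from $P$ and from $\M$-pairs with one leg in $[k_0]$, whereas the Chebotarev step above bundles all such middle-block conditions into the outer count. Reconciling these two decompositions, and verifying that the leading constant after averaging is exactly $2^{-|\M \sqcup \mathcal{N}|} k_2! |X|$, requires a careful recount of Legendre conditions case by case; this is the technical heart of the argument and parallels the proof of \cite[Theorem~6.4]{Smith}.
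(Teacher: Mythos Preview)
Your overall strategy—combine Proposition~\ref{6.7} for the deep-inner conditions with Proposition~\ref{6.3} for the rest—is the right one and matches the paper. However, the order in which you invoke them, and in particular your claim that Proposition~\ref{6.3} applies pointwise in $(\sigma,a)$ with $k=0$, has a genuine gap. The assertion that ``the size bound (vi), with its $2^{|P|}k_2^{c_7}$ factor, absorbs the cost of \ldots\ building in the Legendre restrictions'' is false as stated: hypothesis (vi) controls the \emph{unconditioned} sets $|X_i|$, not the conditioned sets $|X_i(a,P_{\text{new}})|$. For a specific $a$ the filtered set can be far smaller than $2^{-|P_{\text{new}}|}|X_i|$ (indeed it can be empty), so condition~(iv) of Proposition~\ref{6.3} need not hold and the displayed pointwise formula for $|X(\sigma,a)|$ simply fails for those $a$. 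This is not a bookkeeping issue; it is a structural obstruction.

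The paper's proof circumvents this by reversing the order of the two ingredients. One first applies the triangle inequality with the intermediate quantity $|X_S(\sigma,a)|$ (the count where only the $S(\sigma)$-conditions are imposed), yielding two sums. The first is handled directly by Proposition~\ref{6.7} via $\sum_\sigma |X_S(\sigma,a)| = \sum_{x}|W(x,a)|$. For the second sum one writes $X_S(\sigma,\tilde{a})$ as a product with middle factors $X_i(\tilde{a},\tilde{P})$ for $k_0<i\leq k_1$, \emph{explicitly discards} those $\tilde{a}$ for which some $|X_i(\tilde{a},\tilde{P})|<2^{-|\tilde{P}|}k_2^{-c_7}|X_i|$ (showing their total contribution is $\leq k_1 k_2^{-c_7}\cdot k_2!\cdot|X|<k_2^{-\delta_1}\cdot k_2!\cdot|X|$), and only then applies Proposition~\ref{6.3}—with parameter $k$ corresponding to $k_1$, not $0$—on the surviving $\tilde{a}$, where the required lower bound on the filtered sets now holds by construction. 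This removal step is the missing idea in your proposal; once you insert it, the rest of your outline aligns with the paper's argument.
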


\begin{proof}
Without loss of generality assume $\M= \{(i,j):1\leq i < j\leq r\}$ and $\mathcal{N}= P\times [r]$ and $X_i=\{x_i\}$ for $i\leq k_0$. 

Let $m:=k_1|P|+\frac{1}{2}k_0(k_0-1)+k_0(k_1-k_0)$ as in Proposition~\ref{6.7}.
Apply the triangle inequality to the sum we wish to bound 
\begin{multline}\label{6.4sum}
2^{-|\M\sqcup\mathcal{N}|}\sum_{a\in\FF_2^{\M\sqcup\mathcal{N}}}
\left|k_2!\cdot |X|- 2^m\sum_{\sigma\in\mathcal{P}(k_2)}|X_S(\sigma,a)|\right|\\+
2^{-|\M\sqcup\mathcal{N}|+m}\sum_{\sigma\in\mathcal{P}(k_2)}\sum_{a\in\FF_2^{\M\sqcup\mathcal{N}}}
\left||X_S(\sigma,a)|-2^{|\M\sqcup\mathcal{N}|-m}|X(\sigma,a)|\right|
.
\end{multline}
For the first sum in \eqref{6.4sum}, noting that
\[
\sum_{x\in X}|W(x,a)|=\sum_{\sigma\in\mathcal{P}(k_2)}|X_S(\sigma,a)|,
\]
we obtain by Proposition~\ref{6.7} an upper bound 
\[\left(\frac{2^{|P|+k_0+1}}{k_2}\right)^{1/2} k_1\cdot k_2!\cdot |X|<k_2^{-\delta_1}\cdot k_2!\cdot |X|.\]

Now consider the second sum of \eqref{6.4sum}, for each $\sigma\in\mathcal{P}(k_2)$, we can partition $X$ into $2^m$ sets according to $\tilde{a}:S(\sigma)\rightarrow \{\pm 1\}$ as follows
\[X_S(\sigma,\tilde{a})=
\{x_1\}\times\dots\times\{x_{k_0}\}\times 
X_{k_0+1}(\tilde{a},\tilde{P})\times \dots \times X_{k_1}(\tilde{a},\tilde{P})\times 
X_{k_1+1}\times\dots\times X_r,\]
where $\tilde{P}=\{x_1\}\cup\dots\cup\{x_{k_0}\}\cup P$.

We first bound the contribution of $a\in\mathcal{P}(k_2)$ with $|X_i(a,\tilde{P})|< 2^{-|\tilde{P}|}k_2^{-c_7}|X_i|$ for some $k_0< i\leq k_2$ in the sum. For each $\sigma\in\mathcal{P}(k_2)$ and $k_0<i\leq k_1$, we have the upper bound 
\[\sum_{\tilde{a}:|X_i(\tilde{a},\tilde{P})|\leq 2^{-|\tilde{P}|}k_2^{-c_7} |X_i| }\left|X_S(\sigma,\tilde{a})\right|
\leq k_2^{-c_7}|X|.\]
For each $\tilde{a}$, there are $2^{|\M\sqcup\mathcal{N}|-m}$ many $a$ satisfying $a\restriction_{S(\sigma)}=\tilde{a}$, so
the contribution of such $a$ is bounded by
\[
\sum_{\sigma\in\mathcal{P}(k_2)}\sum_{k_0<i\leq k_1}\sum_{\tilde{a}:|X_i(\tilde{a},\tilde{P})|\leq 2^{-|\tilde{P}|}\cdot k_2^{-c_7}\cdot |X_i| } |X_S(\sigma,\tilde{a})|\\
\leq k_1 k_2^{-c_7}\cdot k_2!\cdot |X|
< k_2^{-\delta_1}\cdot k_2! \cdot |X|
.
\]

For the remaining terms we have $|X_i(a,\tilde{P})|< 2^{-|\tilde{P}|} k_2^{-c_7} |X_i|$ for all $k_0< i\leq r$. Bound each summand by Proposition~\ref{6.3}
\[\left||X_S(\sigma,a)|-2^{|\M\sqcup\mathcal{N}|-m} |X(\sigma,a)|\right|
\leq
t_{k_0 + 1}^{-\delta_2}|X_S(\sigma,a)|,
\]
then summing over $\sigma$ and $a$ gives the required estimate.
\end{proof}

There is a final technical proposition that will be of key importance in our next section. We will now state and prove it.

\begin{prop}
\label{pPointSub}

Fix positive constants $c_1,\dots, c_6$ such that
$c_2c_3+2c_4+c_5< \frac{1}{4}$, and $c_6>3$. 
Take $\delta>0$ satisfying $2\delta <\frac{1}{4}-c_2c_3-2c_4-c_5$, then the following holds for any large enough $D_1$. Take $1\leq k\leq r$. Suppose $\M\subseteq \{(i,j):1\leq i< j\leq r\}$ and $\mathcal{N}\subseteq P\times\{k+1,\dots,r\}$. Let $a:\M\sqcup\mathcal{N}\rightarrow\{\pm 1\}$. Let $U, V \subseteq [r]$ be disjoint subsets such that $U \cup V = [l]$ for some $l$. 
Let $(X, P)$ be a prebox with parameters $D_1<s_1<t_1<s_2<t_2<\dots<s_r<t_r$ such that 
\[
X_j:=
\{x_j\in (s_j,t_j)\text{ prime}: x_j\equiv 1\bmod 4,\ M_j(x_j)=a\restriction_{\mathcal{N}_{j}}\}\quad\text{if }j>k+|U|.
\]

Further assume 
\begin{enumerate}
\item $(X,P)$ is Siegel-less above $D_1$;
\item $P\subseteq [t_1]$ and $|P|\leq \log t_i-i$ for all $1\leq i\leq r$;
\item $\log t_{k+1}>\max\{(\log t_1)^{c_6},D_1^{c_1}\}$ if $k<r$, and $\log t_{k+|U|}<t_1^{c_2}$;
\item 
$|X_i|\geq e^it_i(\log t_i)^{-c_3}$ for all $1\leq i\leq r$;
\item $r<D_1^{c_4}$;
\item for each $1\leq i\leq r$, $j_i:= i-1+\lfloor c_5 \log t_i\rfloor$ satisfy $j_1> k+|U|$, and
$\log t_{j_i}>(\log t_i)^{c_6}$ if $j_i\leq r$;
\item $\log \log t_u > \frac{1}{5} \log \log t_r$ for all $u \in U$.
\end{enumerate}

We say that $Q \in \pi_V(X)$ is poor if there is $u \in U$ such that
\[
\left||X_u(a, Q)| - \frac{|X_u(a)|}{2^{|V|}}\right| > t_1^{-2c_4 - 2\delta} |V| |X_u|.
\]
Then
\[
\sum_{Q \in \pi_V(X) \textup{ poor }} |X(a, Q)| \leq  r \cdot t_1^{-(c_4 + \delta)} \cdot 2^{-|\M|} |X|.
\]
\end{prop}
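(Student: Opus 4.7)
The plan is to carry out a second-moment / Chebyshev argument separately for each $u \in U$ and combine by a union bound. By the union bound over $u \in U$, it suffices to prove, for each fixed $u \in U$,
\[
S_u := \sum_{\substack{Q \in \pi_V(X) \\ \bigl||X_u(a, Q)| - |X_u(a)|/2^{|V|}\bigr| > t_1^{-2c_4 - 2\delta}|V||X_u|}} |X(a, Q)| \ll t_1^{-(c_4+\delta)} \cdot 2^{-|\M|} |X|;
\]
summing over $u \in U$ with $|U| \leq r$ then yields the proposition.

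To control $S_u$ for fixed $u$, I would first estimate the second moment of $|X_u(a, Q)|$ as $Q$ ranges over $\pi_V(X)$. Expanding the square gives
\[
\sum_{Q \in \pi_V(X)} |X_u(a, Q)|^2 = \sum_{x_u, x_u' \in X_u}\bigl|\{Q \in \pi_V(X) : Q \text{ is compatible with both } x_u \text{ and } x_u'\}\bigr|,
\]
and I would estimate the inner count by applying Proposition~\ref{6.3} to the augmented prebox in which $x_u$ (resp.\ the pair $x_u, x_u'$) is moved from $X_u$ into $P$. The diagonal $x_u = x_u'$ reproduces the square of the first moment, while for the off-diagonal $x_u \neq x_u'$ the Jacobi conditions imposed on $x_v$ for $v \in V$ by $x_u$ and $x_u'$ are asymptotically independent, giving cancellation of the main term with relative error $O(t_1^{-\delta})$. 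This produces a variance bound of the form
\[
\sum_{Q \in \pi_V(X)}\Bigl(|X_u(a, Q)| - |X_u(a)|/2^{|V|}\Bigr)^2 \ll t_1^{-\delta}\cdot |X_u|^2 \cdot 2^{-|V|}\cdot |\pi_V(X)|.
\]

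Chebyshev's inequality then converts this variance bound into a bound on the number of $u$-poor $Q$. A further application of Proposition~\ref{6.3}, now with $\{x_v : v \in V\}$ all absorbed into $P$, shows that for typical $Q$ we have $|X(a, Q)| \ll 2^{-|\M|}|X|/|\pi_V(X)|$. Multiplying the two estimates gives the required bound on $S_u$. The main obstacle will be verifying that the hypotheses of Proposition~\ref{6.3} continue to hold after the various moves of primes into $P$: the constraint $|P| \leq \log t_i - i$ is the most delicate, and it is precisely for this purpose that hypothesis (vii), $\log\log t_u > \tfrac{1}{5}\log\log t_r$, is imposed, since it guarantees that the adjoined primes are small enough relative to later $t_i$ that the augmented prebox remains admissible. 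A secondary technical point is the handling of the degenerate pairs $(x_u, x_u')$ for which the Jacobi-symbol independence degenerates; these exceptional pairs can be absorbed into the error term by invoking the large-sieve bound of Proposition~\ref{larges}.
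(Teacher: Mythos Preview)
Your second-moment plan is a natural idea, but it has a genuine gap at the final step, and the paper proceeds quite differently.

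The paper does \emph{not} use a variance argument. It argues by induction on $|V|$, closely following the proof of Proposition~\ref{6.3}: peel off the smallest $v\in V$, use the large-sieve/Chebotarev dichotomy from that proof to define an exceptional set $X_v^{\text{bad}}(a)\subseteq X_v$, and split the sum according to whether $\pi_v(Q)\in X_v^{\text{bad}}(a)$. The exceptional part is bounded by a direct application of Proposition~\ref{6.3} (with $x=\pi_v(Q)$ moved into $P$). For the non-exceptional part one checks, by a triangle-inequality computation, that if $Q$ is poor and $\pi_v(Q)\notin X_v^{\text{bad}}(a)$ then $\pi_{V\setminus\{v\}}(Q)$ is poor for the smaller prebox, and the induction hypothesis applies. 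Hypothesis~(vii) is used in the case $1\in U$ (so $v>1$) to justify applying Proposition~\ref{6.3} with the parameter $k$ pushed all the way to $r-1$.

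The gap in your approach is the line ``multiplying the two estimates gives the required bound on $S_u$.'' Chebyshev bounds the \emph{number} of $u$-poor $Q$, but the proposition asks for the \emph{weighted} sum $\sum_{Q\ u\text{-poor}}|X(a,Q)|$. You propose to multiply by the ``typical'' value of $|X(a,Q)|$, obtained from Proposition~\ref{6.3} with $Q$ absorbed into $P$; but that application of Proposition~\ref{6.3} requires its hypothesis~(iv), namely $|X_j(a,Q)|\geq e^jt_j(\log t_j)^{-c_3}$ for the relevant $j$, and there is no reason the poor $Q$ should satisfy this --- showing it for most $Q$ is essentially what the proposition is about. Nor is a crude uniform upper bound on $|X(a,Q)|$ available: dropping the $\M$-constraints that involve $V$ loses a factor $2^{O(|V|r)}$, which overwhelms any saving from Chebyshev.

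There is a second difficulty hiding in your variance computation. The off-diagonal term is a product over $v\in V$ of counts $|\{q\in X_v:(\tfrac{x_u}{q})=(\tfrac{x_u'}{q})=a_{u,v}\}|$. For small $v$ these counts are controlled only on average in $x_u$ by the large sieve (Proposition~\ref{larges}), not pointwise; to control the product you would need simultaneous control of the exceptional sets across all $v\in V$, which the large sieve does not provide. The paper's induction avoids this entirely by treating one $v$ at a time.
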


\begin{proof}
We proceed by induction on $|V|$. The case $|V| = 0$ is trivial. Let $v$ be the smallest element in $V$.  Then we clearly have that $v \leq 1 + |U| \leq k + |U|$. Fix some $x \in X_v$. Put $B_1 := c_4 + c_5 + \delta$ and $B_2 := 2c_4 + 2\delta$. Following the proof of Proposition~\ref{6.3} we get that
\[
\left||X_j(a, x)| - \frac{1}{2} |X_j| \right| < t_1^{-B_2} |X_j| \text{ for all } 1 \leq j \leq k + |U| \text{ with } j \neq v
\] 
holds for $x \in X_v$ with at most $(k + |U|) t_1^{-B_1} |X_v|$ exceptions, while for $j > k + |U|$ we always get
\[
\left||X_j(a, x)| - \frac{1}{2} |X_j| \right| < t_j^{-1} |X_j|.
\]
Just as in the proof of Proposition~\ref{6.3} define $X_v^{\text{bad}}(a)$ to be the set of exceptions. We split the sum in the proposition as
\begin{align}
\label{eSplitSub}
\sum_{Q \in \pi_V(X) \textup{ poor }} |X(a, Q)| &= \sum_{\substack{Q \in \pi_V(X) \textup{ poor } \\ \pi_v(Q) \not \in X_v^{\text{bad}}(a)}} |X(a, Q)| + \sum_{\substack{Q \in \pi_V(X) \textup{ poor } \\ \pi_v(Q) \in X_v^{\text{bad}}(a)}} |X(a, Q)| \nonumber \\
&\leq \sum_{\substack{Q \in \pi_V(X) \textup{ poor } \\ \pi_v(Q) \not \in X_v^{\text{bad}}(a)}} |X(a, Q)| + \sum_{\substack{Q \in \pi_V(X) \\ \pi_v(Q) \in X_v^{\text{bad}}(a)}} |X(a, Q)|.
\end{align}
We first treat the latter sum in equation~\eqref{eSplitSub}. In the case $v = 1$, we apply Proposition~\ref{6.3} to the prebox
\[
(X_2 \times \dots \times X_{k + |U|} \times X_{k + |U| + 1}(a, x) \times \dots \times X_r(a, x), P \cup \{x\})
\]
for $x \in X_1^{\text{bad}}(a)$ and the natural restrictions of $a$, $U$, $V$, $\M$ and $\mathcal{N}$. Then the latter sum is bounded by
\[
\sum_{\substack{Q \in \pi_V(X) \\ \pi_1(Q) \in X_1^{\text{bad}}(a)}} |X(a, Q)| = \sum_{x \in  X_1^{\text{bad}}(a)} |X(a) \cap \pi_1^{-1}(x)| \leq |X_1^{\text{bad}}(a)| \cdot 2^{-|\M| + k + |U| + 1} \frac{|X|}{|X_1|}.
\]
A small computation shows that this is at most
\[
\frac{1}{2} \cdot r \cdot t_1^{-(c_4 + \delta)} \cdot 2^{-|\M|} |X|
\]
for sufficiently large $D_1$. Now suppose that $v \neq 1$ so that $1 \in U$. Then apply Proposition~\ref{6.3} with $k = r - 1$, the prebox
\[
(X_1 \times \dots \times X_{v - 1} \times X_{v + 1} \times \dots \times X_r, \varnothing)
\]
and the natural restrictions of $a$, $U$, $V$, $\M$ and $\mathcal{N}$. Crucially, we have that this choice of $k$ satisfies the requirements of Proposition~\ref{6.3} for sufficiently large $D_1$ due to our assumptions $c_5 > 5$ and $\log \log t_1 > \frac{1}{5} \log \log t_r$. Then a similar computation shows that the latter sum is again at most
\[
\frac{1}{2} \cdot r \cdot t_1^{-(c_4 + \delta)} \cdot 2^{-|\M|} |X|.
\]
It remains to bound the former sum in equation~\eqref{eSplitSub}. We first treat the case $v = 1$. Take a poor $Q \in \pi_V(X)$ with $x := \pi_1(Q) \not \in X_1^{\text{bad}}(a)$. Then we claim that $\pi_{V - \{1\}}(Q)$ is poor for the prebox
\[
(X_2(a, x) \times \dots \times X_r(a, x), P \cup \{x\}).
\]
Suppose that $\pi_{V - \{1\}}(Q)$ is not poor. Then we get for all $u \in U$ that
\[
\left||X_u(a, Q)| - \frac{|X_u(a, x)|}{2^{|V| - 1}}\right| \leq t_2^{-B_2} (|V| - 1) |X_u|.
\]
But from this we deduce that for all $u \in U$
\begin{align*}
\left||X_u(a, Q)| - \frac{|X_u(a)|}{2^{|V|}}\right| &\leq \left||X_u(a, Q)| - \frac{|X_u(a, x)|}{2^{|V| - 1}}\right| + \left|\frac{|X_u(a, x)|}{2^{|V| - 1}} - \frac{|X_u(a)|}{2^{|V|}}\right| \\
&\leq t_2^{-B_2} (|V| - 1) |X_u| + t_1^{-B_2} |X_u| \leq t_1^{-B_2} |V| |X_u|,
\end{align*}
establishing the claim. Now we can easily bound equation~\eqref{eSplitSub} using the induction hypothesis. Finally we deal with the case that $v \neq 1$ so that $1 \in U$. In this case we apply the induction hypothesis to the prebox
\[
(X_1 \times \dots \times X_{v - 1} \times X_{v + 1} \times \dots \times X_r, \varnothing)
\]
shifting $k + |U|$ all the way up to $r - 1$.
\end{proof}

As alluded to earlier, the squarefree integers play a crucial role in our analysis. It turns out to be more convenient to work with squarefree integers with a fixed number of prime divisors, and this naturally leads to the following definition.

We now define special preboxes that we call boxes. These boxes provide a natural way to study distributional properties $S_r(N)$ as we shall see in the coming proposition, which is based on Proposition~6.9 in Smith \cite{Smith}.

\begin{mydef}
Suppose $0 \leq k\leq r$. For any $\mathbf{t}=(p_1,\dots,p_k,s_{k+1},\dots,s_r)$ such that
\begin{enumerate}
\item $p_1<p_2<\dots<p_k<D_1$ is a sequence of primes not congruent to $3\bmod 4$,
\item $D_1<s_{k+1}<t_{k+1}<s_{k+2}<t_{k+2}<\dots <s_r<t_r$ is a sequence of real numbers where
\[t_i=\left(1+\frac{1}{e^{i-k}\log D_1}\right)s_i.\]
\end{enumerate}
Define \[X(\mathbf{t}):=X_1\times \dots\times X_r\] with
\[X_i:=
\begin{cases}
\hfil\{p_i\}&\text{if }i\leq k,\\
\{p\in (s_j,t_j)\text{ prime}: p\equiv 1\bmod 4\}&\text{if }i>k.\end{cases}\]

We call $X$ a box if $X=X(\mathbf{t})$ for some $\mathbf{t}$.
There is a bijection from $X$ to a subset of $S_r(N)$. By abuse of notation, denote this subset by $X$. 
\end{mydef}

\begin{theorem}
\label{6.9}
Take $N\geq D_1\geq 3$ with $\log N\geq (\log D_1)^2$. 
Let $W\subseteq S_r(N)$ be a set of comfortably spaced elements above $D_1$ such that
\[\left||W|-\Phi_r(N)\right|<\epsilon\Phi_r(N)\]
for some constant $\epsilon>0$.
Let $V\subseteq S_r(N)$
and suppose that there exists some constant $\delta>0$ such that
\[\left||V\cap X|-\delta|X|\right|<\epsilon |X|\]
for any box $X\subseteq S_r(N)$ satisfying $X\cap W\neq\varnothing$.
Then 
\[|V|-\delta\Phi_r(N)\ll\left(\epsilon+\frac{1}{\log D_1}\right)\Phi_r(N).\]
\end{theorem}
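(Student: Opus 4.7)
The plan is to decompose $W$ (and most of $S_r(N)$) into a disjoint family of boxes, apply the local density hypothesis box-by-box, and control the leftover. Fix a canonical grid: for each level $0 \leq k < r$ and each coordinate $i > k$, set $\rho_{i,k} := 1 + (e^{i-k} \log D_1)^{-1}$ and consider the geometric progression $\{D_1 \rho_{i,k}^j : j \geq 0\}$, whose successive points partition $(D_1, N]$ into intervals of ratio $\rho_{i,k}$, which exactly matches the ratio $t_i/s_i$ forced by the definition of a box. For $n \in S_r(N)$ with prime factorisation $p_1 < \cdots < p_r$, let $k(n)$ be the number of prime factors of $n$ below $D_1$, and for $i > k(n)$ let $I_i(n)$ be the unique tile of the level-$k(n)$ grid containing $p_i$. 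Whenever $\sup I_i(n) < \inf I_{i+1}(n)$ holds for every $k(n) < i < r$, the product
\[
X(n) := \{p_1\} \times \cdots \times \{p_{k(n)}\} \times Y_{k(n)+1}(n) \times \cdots \times Y_r(n),
\]
with $Y_i(n)$ the set of primes $\equiv 1 \bmod 4$ inside $I_i(n)$, is a box containing $n$. For $n \in W$ the comfortable-spacing condition $p_{i+1} > 2 p_i$ (for $p_i > D_1$), combined with $\rho_{i,k} < 2$, forces the required ordering, so $X(n)$ is well-defined for every $n \in W$.

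Put $\mathcal{X} := \{X(n) : n \in W\}$. Unique tile assignment guarantees $X(m) = X(n)$ whenever $m \in X(n)$ for some $n \in W$, so the family $\mathcal{X}$ is pairwise disjoint, satisfies $W \subseteq \bigcup \mathcal{X} \subseteq S_r(N)$, and every $X \in \mathcal{X}$ meets $W$. Our hypothesis yields $\bigl| |V \cap X| - \delta |X| \bigr| < \epsilon |X|$ for each $X \in \mathcal{X}$; summing gives
\[
\Bigl| \bigl|V \cap \textstyle{\bigcup} \mathcal{X}\bigr| - \delta \bigl|\textstyle{\bigcup} \mathcal{X}\bigr| \Bigr| \leq \epsilon \Phi_r(N).
\]

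Now control $S_r(N) \setminus \bigcup \mathcal{X}$. Any element here either fails the ordering condition (hence is not comfortably spaced above $D_1$), or lies in a canonical box $X$ disjoint from $W$. The first population has size $\ll \Phi_r(N)/\log D_1$ by Theorem~\ref{theorem:small}\ref{part1} applied with $y_1 = D_1$, using $\log N \geq (\log D_1)^2$ to absorb the $(\log N)^{-1/2+\epsilon}$ contribution. The second population, being a union of boxes disjoint from $W$, is contained in $S_r(N) \setminus W$ and has size $\leq \epsilon \Phi_r(N)$. Combining,
\[
\bigl| S_r(N) \setminus \textstyle{\bigcup} \mathcal{X} \bigr| \ll \Bigl(\epsilon + \frac{1}{\log D_1}\Bigr) \Phi_r(N),
\]
so $|V \setminus \bigcup \mathcal{X}|$ satisfies the same bound and $|\bigcup \mathcal{X}| = \Phi_r(N) + O((\epsilon + 1/\log D_1)\Phi_r(N))$. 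Putting everything together,
\[
|V| = \delta \bigl| \textstyle{\bigcup} \mathcal{X} \bigr| + O\bigl((\epsilon + 1/\log D_1) \Phi_r(N)\bigr) = \delta \Phi_r(N) + O\bigl((\epsilon + 1/\log D_1) \Phi_r(N)\bigr),
\]
as required.

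The main delicacy is the construction of the canonical disjoint tiling whose ratios exactly match the box definition, so that comfortably spaced elements of $W$ slot uniquely into boxes; the key input that produces the $1/\log D_1$ error is the comfortable-spacing estimate of Theorem~\ref{theorem:small}\ref{part1}, while all other steps are bookkeeping of the hypothesis applied to each box in the cover.
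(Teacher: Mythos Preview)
Your disjoint-tiling approach is a genuine alternative to the paper's proof, which instead integrates over the continuous box parameter $\mathbf{t}$ with measure $\prod ds_i/s_i$ and observes that each $n$ receives weight exactly $\prod_{i>k}\log\rho_{i,k}$ (up to boundary effects). Your discrete version is arguably more transparent, but there is one real gap and one misattribution.

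The gap is the unsupported claim $\bigcup\mathcal{X}\subseteq S_r(N)$. For $n\in W$ with $n$ close to $N$, the box $X(n)$ can contain elements as large as $n\prod_{i>k}\rho_{i,k}=n\bigl(1+O(1/\log D_1)\bigr)>N$, so $X(n)\not\subseteq S_r(N)$ and the density hypothesis (which is only assumed for boxes contained in $S_r(N)$) does not apply to it. The fix is to discard from $\mathcal{X}$ those boxes not contained in $S_r(N)$; the $n\in W$ they covered lie in $(N(1-C/\log D_1),\,N]$, and by Selberg--Sathe this interval contributes $O(\Phi_r(N)/\log D_1)$ elements. This boundary effect is the true source of the $1/\log D_1$ term, and it is exactly what the paper handles via condition~\eqref{6.1}.

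The misattribution is your appeal to Theorem~\ref{theorem:small}\ref{part1} for the ``first population''. That theorem carries the standing hypothesis~\eqref{eq:rrange} on $r$, which is not assumed in Theorem~\ref{6.9}, so you may not invoke it here. Fortunately you do not need it: any $n$ failing the ordering condition is not comfortably spaced above $D_1$, hence $n\notin W$, so the first population already sits inside $S_r(N)\setminus W$ and is bounded by $\epsilon\Phi_r(N)$, just like the second population. Once the boundary correction above supplies the $1/\log D_1$, the argument goes through.
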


\begin{proof}
Define $\mathcal{T}_k=\{\mathbf{t}:X(\mathbf{t})\cap W\neq \varnothing\}$.
Our aim is to estimate $|V|$ in terms of
\[\int_{\mathcal{T}_k}|V\cap X(\mathbf{t})|\frac{dp_1\cdots dp_kds_{k+1}\cdots ds_r}{s_{k+1}\cdots s_r},\]
where $dp_i$ is $1$ if $p_i\equiv 1\bmod 4$ is prime and $0$ otherwise.

Consider $n=(q_1,\dots,q_r)\in S_r(N)$ with exactly $k$ prime factors less than $D_1$. 
Then $n\in X(\mathbf{t})$ if and only if
$q_i=p_i$ for $1\leq i\leq k$ and 
\[s_i<q_i<\left(1+\frac{1}{e^{i-k}\log D_1}\right)s_i \text{ for }k<i\leq r.\]
If $n\in W$ and
\begin{equation}\label{6.1}
n\prod_{i=k+1}^r \left(1+\frac{1}{e^{i-k}\log D_1}\right)<N,
\end{equation}
then 
\[\begin{split}
\int_{\substack{\mathbf{t}\in\mathcal{T}_k:\\n\in X(\mathbf{t})}}\frac{dp_1\cdots dp_kds_{k+1}\cdots ds_r}{s_{k+1}\cdots s_r}
&=\int_{q_{k+1}\left(1+\frac{1}{e\log D_1}\right)^{-1}}^{q_{k+1}}\cdots\int_{q_r\left(1+\frac{1}{e^{r-k}\log D_1}\right)^{-1}}^{q_r}\frac{ds_{k+1}\cdots ds_r}{s_{k+1}\dots s_r}\\
&=\prod_{i=k+1}^r \log\left(1+\frac{1}{e^{i-k}\log D_1}\right)
.\end{split}\]
If \eqref{6.1} does not hold or $n\notin W$, 
then 
\[\int_{\substack{\mathbf{t}\in\mathcal{T}_k:\\n\in X(\mathbf{t})}}\frac{dp_1\cdots dp_kds_{k+1}\cdots ds_r}{s_{k+1}\cdots s_r}
\leq\prod_{i=k+1}^r \log\left(1+\frac{1}{e^{i-k}\log D_1}\right)
.\]

There exists some constant $C>0$ such that any $n$ that does not satisfy \eqref{6.1} lies in 
\[N\left(1-\frac{C}{\log D_1}\right)
\leq N\prod_{i=k+1}^r\left(1+\frac{1}{e^{i-k}\log D_1}\right)^{-1}\leq n\leq N.\]
The number of such $n$ in $S_r(N,D)$ is bounded by 
\[
\Phi_r(N)-\Phi_r\left(N\left(1-\frac{C}{\log D_1}\right)\right)
\ll
\frac{\Phi_r(N)}{\log D_1},
\]
using estimates from the Selberg--Sathe Theorem~\cite{Selberg}.

Then
\[\sum_{k=0}^{\infty}\prod_{i=k+1}^r \log\left(1+\frac{1}{e^{i-k}\log D_1}\right)^{-1}\int_{\mathcal{T}_k}|V\cap X(\mathbf{t})|\frac{dp_1\cdots dp_kds_{k+1}\cdots ds_r}{s_{k+1}\cdots s_r}
\]
is bounded above by $|V|$ and below by
\[|V\cap W|+O\left(\frac{\Phi_r(N)}{\log D_1}\right)=|V|+O\left(\left(\epsilon+\frac{1}{\log D_1}\right)\Phi_r(N)\right).\]
Similarly
\begin{multline*}
\sum_{k=0}^{\infty}
\prod_{i=k+1}^r 
\log\left(1+\frac{1}{e^{i-k}\log D_1}\right)^{-1}
\int_{\mathcal{T}_k}|X(\mathbf{t})|\frac{dp_1\cdots dp_kds_{k+1}\cdots ds_r}
{s_{k+1}\cdots s_r}\\
=\Phi_r(N)
\left(1+O\left(\epsilon+\frac{1}{\log D_1}\right)\right).
\end{multline*}
The result follows from the estimate $|V\cap X|=(\delta+O(\epsilon))|X|$.
\end{proof}

Our next proposition deals with boxes that are not Siegel-less. It is directly based on Proposition~6.10 in Smith \cite{Smith}.

\begin{theorem}
\label{6.10}
Let $d_1,d_2,\dots$ be a sequence of distinct squarefree integers greater than $D_1$ satisfying $d_i^2<d_{i+1}$.
Take $N\geq D_1\geq 3$ satisfying $\log N\geq (\log D_1)^2$.
Define
\[V_i:=\bigcup \{X\subseteq S_r(N)\text{ a box}: d_i\mid x\text{ for some }x\in X\}.\]
Then 
\[|\cup_{i\geq 1} V_i|\ll\frac{\Phi_r(N)}{\log D_1}.\]
\end{theorem}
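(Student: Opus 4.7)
The plan is to bound $|V_i|$ for each $i$ individually and sum over $i$, exploiting the sparsity $d_i \geq D_1^{2^{i-1}}$ forced by $d_{i+1}>d_i^2$.

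First I would characterize membership combinatorially. Decompose $d_i = a_i b_i$, where $a_i$ collects the prime factors of $d_i$ below $D_1$ and $b_i = q_1 \cdots q_m$ those above. Since the fixed-prime coordinates and the interval-prime coordinates in a box are cleanly separated at $D_1$, one checks that $n \in V_i$ iff $a_i \mid n$ and, for every $l \in \{1, \ldots, m\}$, there is a distinct prime factor $p_l^\ast$ of $n$ with $|\log(p_l^\ast/q_l)| \leq 1/(e\log D_1)$ (these being the prime factors of $n$ in the interval slots $j>k$ of the witnessing box).

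Second, I would estimate
\[
|V_i| \leq \sum_{(p_l^\ast)} \bigl|\{n \in S_r(N) : a_i p_1^\ast \cdots p_m^\ast \mid n\}\bigr|,
\]
where the outer sum runs over ordered tuples of distinct primes $\equiv 1 \bmod 4$ with $p_l^\ast$ in the above neighbourhood of $q_l$. Apply Theorem~\ref{theorem:Tudesq} with $E_1 = \{p \mid a_i p_1^\ast \cdots p_m^\ast\}$ to bound the inner count as a fraction of $\Phi_r(N)$, and apply PNT in arithmetic progressions to evaluate $\sum_{p_l^\ast} 1/p_l^\ast \ll 1/(\log q_l \cdot \log D_1)$. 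Combining, using $\prod_l \log q_l \geq \log q_{\max} \geq (\log b_i)/m$, and separating the marginal case $a_i \geq d_i^{1/2} \geq D_1^{1/2}$ (absorbed by its smallness), this yields
\[
|V_i| \ll \frac{m\,\Phi_r(N)}{(\log D_1)^{m}\log d_i} \quad (m = m_i \geq 1), \qquad |V_i| \ll \frac{\Phi_r(N)}{d_i} \quad (m_i = 0).
\]

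Finally I would sum. For $m_i = 0$, the geometric growth gives $\sum_i 1/d_i \leq 2/D_1 \ll 1/\log D_1$. For $m \geq 1$, group the indices by the value of $m_i = m$: within each group $\{i_1 < i_2 < \cdots\}$ the sparsity still forces $d_{i_j} \geq D_1^{2^{j-1}}$, so $\sum_j 1/\log d_{i_j} \leq 2/\log D_1$, and hence
\[
\sum_{i: m_i \geq 1} \frac{|V_i|}{\Phi_r(N)} \ll \sum_{m \geq 1} \frac{m}{(\log D_1)^{m+1}} \ll \frac{1}{(\log D_1)^2}.
\]
Together, $|\bigcup_i V_i| \leq \sum_i |V_i| \ll \Phi_r(N)/\log D_1$. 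The main obstacle lies in the second step: the Tudesq estimate carries a correction factor behaving like $(r/\mu)^{\omega(d_i)}$ when converting the count of multiples into a fraction of $\Phi_r(N)$, which must be kept subpolynomial in $\log D_1$ uniformly in $d_i$ and $(p_l^\ast)$. This is handled by restricting to the Erd\H{o}s--Kac typical range $|r - \mu| \leq \mu^{2/3}$, outside of which the contribution to $\Phi_r(N)$ is already negligible by the estimates from Section~\ref{sPrimes}.
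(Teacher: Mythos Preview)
Your plan mirrors the paper's proof: both characterize $n \in V_i$ by the requirement that $n$ has prime factors close to those of $d_i$ (the paper uses the cruder neighbourhoods $(\tfrac12 p_j, 2p_j)$ rather than the exact box widths), bound $|V_i|$ via $\Phi_{r-\omega(d_i)}$ at the reduced argument, and sum over $i$ using $d_i \geq D_1^{2^{i-1}}$. The paper's execution is somewhat leaner: it does not split by $m_i$, obtaining just the uniform estimate $|V_i| \ll \Phi_r(N)/\log d_i$, and it explicitly separates the range $d_i \geq N^{2/3}$ (at most one such $i$ contributes, handled by the trivial bound $N/\log N$), a split you would also need for the Sath\'e--Selberg asymptotic to apply cleanly to $\Phi_{r-\omega(d_i)}(N/d_i)$.

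One point in your final paragraph needs adjustment. Restricting to $|r-\mu|\leq \mu^{2/3}$ by itself does \emph{not} keep the correction factor subpolynomial in $\log D_1$: with $\omega(d_i)$ potentially as large as $r$, the estimate $(r/\mu)^{\omega(d_i)} = \exp\bigl(O(\omega(d_i)\mu^{-1/3})\bigr)$ can reach $\exp(\mu^{2/3})$. What actually works---and is what the paper uses via its displayed factor $C^m d_i^{-1}\prod_{p_j\geq D_1}(p_j/\log p_j)$---is that for $r<2\mu$ the Sath\'e--Selberg comparison gives an absolute-constant power $C^{\omega(d_i)}$, and this is then absorbed structurally: the piece $C^{\omega(a_i)}$ is beaten by $1/a_i$ (a product of $\omega(a_i)$ distinct primes dominates any fixed exponential), and the piece $C^{m_i}$ is beaten by the $m_i$ factors $\log q_l \geq \log D_1$ in your denominator once $D_1$ is large. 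With that correction your argument goes through.
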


\begin{proof}
Suppose we have some box $X\subseteq V_i$ and $d_i=p_1\cdots p_m$.
For any element $x\in X$, there are prime factors $q_1,\dots,q_m$ of $x$ such that
\[q_i=p_i \text{ if } p_i<D_1,\text{ and }\quad \frac{1}{2}p_i<q_i<2p_i\text{ if } p_i\geq D_1.\]

If $d_i<N^{2/3}$, then there exists some constant $C>0$ such that 
\[\begin{split}
|V_i|&
\leq
\Phi_{r-m}\left(\frac{2^m N}{d_i}\right)
\cdot\prod_{p_i\geq D_1}\left|\left\{q_i\text{ prime}:\frac{1}{2}p_i<q_i<2p_i,\ q_i\equiv 1\bmod 4\right\}\right|\\
&\leq \Phi_r(N)\cdot \frac{C^m }{d_i}\prod_{p_i\geq D_1}\frac{p_i}{\log p_i}
\ll\frac{\Phi_r(N)}{\log d_i}.
\end{split}\]
Notice that $d_i>D_1^{2^{i-1}}$. 
Then 
\[\left|\bigcup_{d_i<N^{2/3}}V_i\right|
\ll\Phi_r(N)\sum_{i\geq 1}\frac{1}{2^{i-2}\log D_1}
\ll\frac{\Phi_r(N)}{\log D_1}.\]

If $d_i\geq N^{2/3}$, then $d_{i+1}\geq N^{4/3}>N$. Therefore there is at most one $i$ such that $d_i\geq N^{2/3}$ and $V_i$ is not empty. Then for sufficiently large $D_1$
\begin{align*}
|V_i|&\leq
\left|\left\{x\in S_r(N):d_i\mid x\right\}\right|\cdot\prod_{p_i\geq D_1}\left|\left\{q_i\text{ prime}:\frac{1}{2}p_i<q_i<2p_i,\ q_i\equiv 1\bmod 4\right\}\right|\\
&\leq
\frac{N}{d_i}\prod_{p_i\geq D_1}\frac{p_i}{\log p_i}
\leq \frac{N}{\log d_i}
\ll \frac{N}{\log N},
\end{align*}
which fits into the error bound.
\end{proof}

\begin{mydef}
Fix some constants $c_{9}, c_{10}>0$.
We call a box $X$ of $S_r(N)$ acceptable if
it 
\begin{enumerate}
\item contains a comfortably spaced element above $D_1=\exp\left(\left(\frac{1}{2}\log\log N\right)^{c_{9}}\right)$,
\item contains a $(c_{10}\log\log\log N)$-regular element, and
\item is Siegel-less above $D_1$.
\end{enumerate}
\end{mydef}

Let $\Msym_n$ denote the set of $n\times n$ symmetric matrices over $\FF_2$. Given any integer $x$, let $p_1<\dots<p_n$ be the distinct prime factors of $x$, and call the matrix $(c_{ij})_{2\leq i,j\leq n}\in \Msym_{n-1}$ defined by
\[(-1)^{c_{ij}}=
\begin{cases}
\hfil \leg{p_i}{p_j}&\text{ if }i\neq j\\
\prod_{l\neq j} \leg{p_l}{p_j}&\text{ if }i=j
\end{cases}\]
the Legendre matrix of $x$. We are now ready to reprove a well-known result due to Fouvry and Kl\"uners \cite{FK2}. Note that unlike the work of Fouvry and Kl\"uners our theorem has the benefit of providing an error term.

\begin{theorem}
\label{tFK}
There exists a constant $c>0$, such that
\[\left|\frac{|\{D<N:D\in\mathcal{D} ,\ \rk_4 D=k\}|}{|\mathcal{D}|}-\lim_{n\rightarrow\infty} P(k|n)\right|\ll (\log\log N)^{-c},\]
where 
\[P(k|n):=\frac{|\{A\in\Msym_{n}(\FF_2):\cork A=k\}|}{|\Msym_{n}(\FF_2)|}.\]
\end{theorem}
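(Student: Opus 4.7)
The plan is to reduce Theorem~\ref{tFK} to the equidistribution result of Theorem~\ref{6.4}, via the classical identification from Gauss--R\'{e}dei genus theory
\[
\rk_4 \CL^+(D) = \cork A(D),
\]
where $A(D) \in \Msym_{\omega(D)-1}(\FF_2)$ is the Legendre matrix of $D$ as defined before the statement of the theorem. For $D \in \DD$ this corank also equals $\rk_4 \CL(D)$, so the quantity $\rk_4 D$ in the statement is well defined. Granting this identification, it suffices to show that $A(D)$ is approximately uniformly distributed over $\Msym_{r-1}(\FF_2)$ as $D$ ranges over $\DD(N)$ with $\omega(D) = r$, and then to invoke the fact that the proportion of $A \in \Msym_{r-1}(\FF_2)$ with $\cork A = k$ is $P(k\,|\,r-1)$, which converges exponentially fast as $r \to \infty$ to $\lim_{n \to \infty} P(k\,|\,n)$.

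First I would restrict attention to $D \in \DD(N)$ with $|r - \mu| < \mu^{2/3}$ where $\mu = \tfrac12 \log \log N$; the excluded $D$ contribute $\ll \exp(-\tfrac12 \mu^{1/3})$ by the Erd\H{o}s--Kac bound quoted at the start of Section~\ref{sPrimes}. For each admissible $r$, Theorem~\ref{theorem:small}\ref{part1} lets me further restrict to $D$ that are comfortably spaced above $D_1 := \exp(\mu^{c_{9}})$, incurring an error $\ll \Phi_r(N) \mu^{-c_{9}}$. An application of Theorem~\ref{6.10} then removes those boxes hit by Siegel-problematic discriminants, so that what remains is a disjoint union of acceptable boxes $X \subseteq S_r(N)$ on which the analytic machinery of Section~\ref{sEqui} applies.

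Second, within each acceptable box $X$ I would apply Theorem~\ref{6.4} with $\M = \{(i,j) : 1 \leq i < j \leq r\}$, $\mathcal{N} = \varnothing$, $P = \varnothing$, $k_0$ equal to the number of fixed prime coordinates of $X$ below $D_1$, and $k_1, k_2$ chosen as small powers of $\mu$ so as to satisfy the seven numbered hypotheses. The conclusion gives equidistribution of the full Jacobi-symbol vector $\bigl(\leg{p_i}{p_j}\bigr)_{i<j}$ up to permutations of the first $k_2$ coordinates, with error $\ll k_2^{-\delta_1} + t_{k_0+1}^{-\delta_2}$. The strict upper triangle of $A(D)$ is freely determined by these symbols, while the diagonal entries $c_{jj}$ are explicit $\FF_2$-linear combinations of them; hence the full symmetric matrix $A(D)$ is uniform on $\Msym_{r-1}(\FF_2)$ modulo the same error. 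Since $\cork A(D) = \rk_4 \CL^+(D)$ depends only on $D$ and not on the ordering of its prime factors, the corank is invariant under the permutations appearing in Theorem~\ref{6.4}, so the distribution of $\cork A(D)$ across $X$ matches $P(\,\cdot\,|\,r-1)$ with the same error.

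Finally, I would aggregate across boxes via Theorem~\ref{6.9}, sum over admissible $r$, and bound $|P(k\,|\,r-1) - \lim_{n \to \infty} P(k\,|\,n)| \ll 2^{-cr}$ by a standard computation for coranks of random symmetric matrices over $\FF_2$. Choosing $c_{9}$, $c_{10}$ and the parameters $k_0, k_1, k_2$ in Theorem~\ref{6.4} so that every error source is $\ll (\log \log N)^{-c}$ for some fixed $c > 0$ completes the argument. The main obstacle is the delicate parameter bookkeeping: the constants $c_1, \ldots, c_{10}$ and the auxiliary $k_1, k_2$ must be chosen consistently so that all hypotheses of Theorem~\ref{6.4} are simultaneously valid on typical acceptable boxes---most notably the spacing conditions (iii)--(vi) and the lower bound $|X_i| \geq 2^{|P|} e^i k_2^{c_7} t_i (\log t_i)^{-c_3}$---while keeping every arising error polylogarithmically small in $N$. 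Verifying that the typical comfortably and regularly spaced $D \in S_r(N)$ actually sit inside boxes meeting all these requirements is the technical heart of the argument.
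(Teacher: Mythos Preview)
Your proposal is correct and follows essentially the same route as the paper: restrict to typical $r$ via Erd\H{o}s--Kac, pass to acceptable boxes using Theorem~\ref{theorem:small} and Theorem~\ref{6.10}, apply Theorem~\ref{6.4} inside each box (the paper takes $k_2 = r$ rather than a small power of $\mu$, but the spirit is the same), exploit permutation-invariance of the corank, and then aggregate via Theorem~\ref{6.9}. The only substantive point you leave as a ``standard computation'' is the bound $|P(k\,|\,r-1) - \lim_n P(k\,|\,n)| \ll 2^{-cr}$, which the paper proves explicitly via a coupling argument for the Markov chain on coranks with transition probabilities $p_{i,i\pm1}, p_{i,i}$; this is indeed routine, so your citation of it is acceptable.
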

\begin{proof}
By Erd\H os--Kac theorem \cite[Proposition~3]{SieveErdosKac}, it suffices to show that for any $r$ satisfying \eqref{eq:rrange} we have
\[\left|\frac{|\{x\in S_r(N):\Delta_{\Q(\sqrt{x})}\in\mathcal{D} ,\ \rk_4 x=k\}|}{|\{x\in S_r(N):\Delta_{\Q(\sqrt{x})}\in\mathcal{D} \}|}-\lim_{n\rightarrow\infty} P(k|n)\right|\ll (\log\log N)^{-c}.\]
Fixing some $r$, we consider the distribution of the $4$-rank of the class groups of quadratic fields with discriminants in
\[
\{\Delta_{\Q(\sqrt{x})}\in\mathcal{D}: x\in S_{r}(N)\}.
\]
We can find some $W\subseteq S_r(N)$ that is comfortably spaced above $D_1$ and $(c_{10}\log\log\log N)$-regular by Theorem~\ref{theorem:small} and Siegel-less above $D_1$ by Proposition~\ref{6.10}, so that 
\[
|W|\geq (1-\epsilon)\Phi_r(N)\text{ with } \epsilon\ll (\log D_1)^{-1}+(\log N)^{-1/2}\ll (\log\log N)^{-c_9}. 
\]
Then applying Theorem~\ref{6.9}, we see that we can restrict to acceptable boxes by introducing an error $\ll (\log\log N)^{-c_9}$. In other words, it suffices to show that for any acceptable box $X$, we have 
\[
\left|\frac{|\{x\in X:\Delta_{\Q(\sqrt{x})}\in\mathcal{D} ,\ \rk_4 x=k\}|}{|\{x\in X:\Delta_{\Q(\sqrt{x})}\in\mathcal{D} \}|}-\lim_{n\rightarrow\infty} P(k|n)\right|\ll(\log\log N)^{-c}.
\]

Take $X$ to be an acceptable box, then one can check that there exists constants that satisfy the requirements in Theorem~\ref{6.4}, applying to prebox $(X_1\times\dots\times X_r, \varnothing)$.

Fix some $c_1>0$.
Take $k_0$ such that $t_{k_0}\leq D_1<t_{k_0+1}$,
$k_1$ such that $t_{k_1}\leq \exp(D_1^{c_1})<t_{k_1+1}$ and $k_2=r$. 
Take $2c_{10}<c_9<1$, $c_8>c_9$, $2c_7<1-c_9$.
We have
$\frac{1}{2}\log\log t_{k_1}\leq \frac{c_1}{2}\log D_1=\frac{c_1}{2}(\frac{1}{2}\log\log N)^{c_9}<r/3$, so $k_1<r/3$ and we have regular spacing at $k_1$.
Then
$k_1\leq 2\log\log t_{k_1}\leq 2c_1(\frac{1}{2}\log\log N)^{c_9}<r^{c_8}$.
Write $\eta:=c_{10}\log\log\log N$.
By regular spacing at $k_0$,
If $k_0\leq \eta$, we have
\[k_0<\frac{1}{2}\log\log t_{k_0}+\eta=\left(\frac{c_9}{2}+c_{10}\right)\log\log \log N<(1-2c_7)\log r.\]
If $k_0> \eta$, we have
\[k_0<\frac{1}{2}\log\log t_{k_0}+k_0^{4/5}\eta^{1/5}<(\left(\left(\frac{c_9}{2}\right)^{1/5}+\frac{1}{5}c_{10}^{1/5}\right)^5\log\log \log N<(1-2c_7)\log r.\]

For $k_0<i\leq r/3$, we have $(1-\frac{2c_{10}}{c_9})\frac{1}{2}\log\log t_i<i<(1+\frac{2c_{10}}{c_9})\frac{1}{2}\log\log t_i$ by regular spacing.
For $r/3<i\leq r$, we have
\begin{equation}\label{eq:rspace}
i<\left(1+\frac{2c_{10}}{c_9}\right)\frac{3}{2}\log\log t_{i/3}<\left(1+\frac{2c_{10}}{c_9}\right)\frac{3}{2}\log\log t_i
\end{equation}
by regular spacing at $i/3$.
Therefore $i<\log t_i$ for $k_0<i\leq r$.
 
We now pick $c_2,\dots,c_6$.
Take $c_1=c_2$ and $c_3>4+6c_{10}/c_9$.
By \eqref{eq:rspace}, 
\[\frac{\log (t_i-s_i)}{\log t_i}
>1-\frac{3\log \log t_i+\log \log D_1}{\log t_i}
>1-\frac{4\log \log D_1}{\log D_1},\]
so using lower bounds on the number of primes in short intervals (for example \cite{IwaniecJutila}) and \eqref{eq:rspace}, we have
$|X_i|\gg \frac{t_i-s_i}{\log t_i}$, so $|X_i|\geq e^{i}k_2^{c_7}t_i(\log t_i)^{-c_3}$ for all $k_0 < i\leq r$. 
Take $c_4>0$ then $r<D_1^{c_4}$.
By regular spacing at $k_1$, we have
\[j_{k_0+1}\geq k_0+c_5\log t_{k_0+1}>k_0+c_2^{-1}c_5\log\log t_{k_1}>k_1,
\]
when $c_5>c_2$ and large enough $N$.
Suppose $\log t_{j_i}<(\log t_i)^{c_6}$ for some $j_i>i-2+c_5\log t_i$,
then 
\[
j_i>i-2+c_5(\log t_{j_i})^{1/c_6}
>c_5(\log t_{k_1+1})^{1/c_6} 
>c_5(D_1)^{c_1/c_6}
>c_5\exp(c_1(\log\log N)^{c_{9}}/c_6)
\]
which is greater than $r$. 
Then Theorem~\ref{6.4} shows that the Legendre matrices of $x\in X$, are equidistributed amongst all $(r-1)\times(r-1)$ symmetric matrices over $\FF_2$ up to reordering some columns and rows, with an error within the statement.

To model the corank of the matrices of $x$, we begin with an empty matrix, then we add an extra column and row (keeping the matrix symmetric) in each step. We consider Markov chains on the non-negative integers with transition probabilities
\[p_{i,j}=
\begin{cases}
\hfil 2^{-i-1}&\text{if }j=i\text{ or }i+1,\\
1-2^{-i}&\text{if }j=i-1,\\
\hfil 0&\text{otherwise.}\\
\end{cases}\]
Here $p_{i,j}$ is the probability of obtaining from any matrix $A$ of corank $i$, a matrix of the form
\[\begin{pmatrix}
A &y\\
y^T&x
\end{pmatrix}\]
of corank $j$, when the column vector $y$ and $x\in \FF_2$ are chosen randomly over $\FF_2$ \cite[Lemma~4]{MacWilliams}.
The transition probabilities give a stationary distribution
\[\pi_j:=\frac{\alpha}{\prod_{i=1}^j (2^i-1)},\]
where $\alpha:=
\prod_{j\text{ odd}}(1-2^{-j})=\prod_{j=1}^{\infty}(1+2^{-j})^{-1}$.
Notice that $\pi_j=\lim_{n\rightarrow\infty}P(j|n)$.
We are interested in the distribution after $r-1$ steps of the Markov chain with starting state being the corank of the empty matrix. We want to measure how far this distribution is from the stationary.

Suppose $(X_s)_{s\geq 0}$, $(X'_s)_{s\geq 0}$, and $(Y_s)_{s\geq 0}$ are independent Markov chains with transition probabilities $(p_{i,j})$, starting at state $i$, state $j$, and the stationary distribution respectively. 
Take some constants $\frac{1}{4}+\frac{1}{\sqrt{2}}<A<C<1$ and $B=\sqrt{2}$.
Write 
\[
f_{i,j}(t):=\prob(\inf\{s\geq 0:X_s=X'_s=0\}=t).
\]
We claim that $f_{i,j}(t)\leq A^{t}B^{i+j}$ for any $i,j,t\geq 0$.
We have $f_{0,0}(0)=1$ and $f_{0,0}(t)=0$ for any $t>0$, so assume $i+j>0$.
Fix $i\leq j$ and carry out induction on $t$.
It takes at least $j$ steps for $(X_s')$ to reach $0$, so $f_{i,j}(t)>0$ only when $t\geq j$. The base case we have $t=j$ and 
\[f_{i,j}(j)< 1<(AB)^j\leq A^{j}B^{i+j}.\]
Suppose our claim holds for any state at $t-1$. Then
\[f_{i,j}(t)=\sum_{i'=i-1}^{i+1}
\sum_{j'=j-1}^{j+1}
p_{i,i'}p_{j,j'}f_{i',j'}(t-1)
\leq
A^{t-1}\sum_{i'=i-1}^{i+1}
p_{i,i'}B^{i'}\sum_{j'=j-1}^{j+1}
p_{j,j'}B^{j'}
<A^{t}B^{i+j},\]
since for any $n>0$ we have
\[p_{n,n+1}B
+p_{n,n}
+\frac{p_{n,n-1}}{B}
=\frac{1}{2^{n+1}}
+\frac{1}{\sqrt{2}}
\leq \frac{1}{4}+\frac{1}{\sqrt{2}}
<A.\]
This completes our claim.

Now let $T:=\inf\{s\geq 0:X_s=Y_s=0\}$, then
\[\prob(T=t)=
\sum_{j=0}^{\infty}\pi_j f_{i,j}(t)
\leq\alpha A^{t}B^i\sum_{j=0}^{\infty}\frac{B^{j}}{\prod_{k=1}^j (2^k-1)}
\ll A^{t}B^i.
\]
Take some $\epsilon>0$ such that $1+\epsilon<A^{-1}C$. We have 
\[
\EE[(1+\epsilon)^{T}]=\sum_{t=0}^{\infty}(1+\epsilon)^t\prob(T=t)
\ll B^i\sum_{t=0}^{\infty}\left((1+\epsilon)A\right)^t
= \frac{B^i}{1-(1+\epsilon)A}
<\frac{B^i}{1-C}
\ll B^i
.
\]
By Markov's inequality and the proof of \cite[Theorem~1.8.3]{Norris}, we have  
\[
\left|\prob(X_r=j)-\pi_j\right|\leq \prob(T\geq r)\leq \frac{\EE[(1+\epsilon)^{T}]}{(1+\epsilon)^r}
\ll \frac{B^i}{(1+\epsilon)^r}.
\]

Take $(X_s)_{s\geq 0}$ to be the Markov chain modelling the $4$-rank of the set
$\{x\in\mathcal{D}: x\in X\}$ which begins at state $i=0$. Take a constant $0<c<\log(1+\epsilon)$, we have
\[\left|\prob(X_{r-1}=j)-\pi_j\right|
=O(\exp(-c(r-1))),\]
which is within the error term.
\end{proof}

\section{Proof of main theorems}
\label{sProof}
Recall from the introduction that
\[
\mathcal{D}_{n, m}(X) = \{D\in \mathcal{D}(X): \rk_4\CL(D) = \rk_4\CL^+(D) = n\text{ and }\rk_8\CL^+(D) = m\}.
\]
We also define
\[
\mathcal{D}_n(X) = \{D\in \mathcal{D}(X): \rk_4\CL^+(D) = n\}.
\]
In this section we prove the following theorem. 

\begin{theorem}
\label{t7.1}
There are $A, N_0 > 0$ such that for all $N > N_0$ and all integers $n_2 \geq n_3 \geq 0$ we have
\[
\left|\left|\mathcal{D}_{n_2, n_3}(N)\right| - Q(n_2|n_3) \cdot \left|\mathcal{D}_{n_2}(N)\right|\right| \leq \frac{AN}{\log \log \log N},
\]
where $Q(n_2|n_3)$ is the probability that a uniformly chosen $(n_2 + 1) \times n_2$-matrix with coefficients in $\FF_2$ has rank $n_2 - n_3$ and bottom row consisting of only zeroes.
\end{theorem}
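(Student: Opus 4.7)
The plan is to compute the conditional distribution of $\rk_8 \CL^+(D)$ given the 4-rank $n_2$ by showing that the Artin pairing $2\CL^+(D)^\vee[4] \times 2\CL^+(D)[4] \to \FF_2$ equidistributes like a uniformly random $(n_2+1) \times n_2$ matrix over $\FF_2$. The extra row accommodates the class of $-1$ (or more precisely the $\chi_{-1}$ direction in the dual), and its vanishing encodes the equality $\rk_4\CL(D) = \rk_4\CL^+(D)$ required in $\mathcal{D}_{n_2,n_3}(N)$. First I would fix the number of prime divisors $r = \omega(D)$ inside the Erd\H{o}s--Kac window via Theorem~\ref{theorem:small}, and then partition $\mathcal{D}_{n_2}(N)$ by the ``shape'' data of an ordered choice of generators of $2\mathcal{C}^{\vee}(D)$ and $2\mathcal{C}(D)$. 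Theorems~\ref{6.9} and~\ref{6.10} reduce the problem, at a cost absorbed into $O(1/\log\log\log N)$, to proving equidistribution on a single acceptable, Siegel-less box $X \subseteq S_r(N)$ with fixed shape.

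On such a box, let $F : X \to \FF_2^{n_2(n_2+1)}$ send $x$ to its Artin pairing matrix. By Theorem~\ref{tBadg}, it suffices to prove that $g := dF \in \mathcal{A}(X)$ equidistributes, and the algebraic identities of Section~\ref{sAlg}---Theorem~\ref{t2.8}, Theorem~\ref{t2.8self}, Theorem~\ref{t.2.8swapmin} and Theorem~\ref{t.2.8swapped}---express each coordinate of $g$ as a R\'{e}dei symbol $[p_1p_2, q_1q_2, c]$. Such a symbol is an Artin symbol in a R\'{e}dei extension of $\Q(\sqrt{p_1p_2q_1q_2})$, so its distribution is accessible through Theorem~\ref{6.4} (combined with Proposition~\ref{pPointSub}) with a power-saving error. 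Feeding the resulting $\epsilon$-equidistribution of $g$ into Theorem~\ref{tBadg} then shows that $F$ itself is $\epsilon$-close to uniform on $\FF_2^{n_2(n_2+1)}$ for all but an $o(1)$-proportion of boxes.

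To translate equidistribution of $F$ into the matrix statistic $Q(n_2|n_3)$, the event ``rank $= n_2 - n_3$ and bottom row zero'' has density exactly $Q(n_2|n_3)$ on $\FF_2^{n_2(n_2+1)}$ by construction, so the per-box conditional probability of $\rk_8\CL^+(D) = n_3$ and $\rk_4 \CL(D) = n_2$ is $Q(n_2|n_3) + o(1)$; aggregating the per-box errors through Theorem~\ref{6.9} produces the claimed $AN/\log\log\log N$ bound. The main obstacle is the Pell-family pitfall flagged in the introduction: the sets of admissible $a$ and $b$ coincide in $\DD$, so Smith's generic/non-generic dichotomy collapses and would force all of $\DD$ into the error term of \cite{Smith}. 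Circumventing this requires a refined notion of genericity together with the genuinely new algebraic identities Theorem~\ref{t2.8self} and Theorem~\ref{t.2.8swapped}, which handle precisely the coincident pairings $\langle \chi_{p_ia}, p_ia \rangle_{p_iq_jd}$ and $\langle \chi_{p_ia}, q_jb \rangle_{p_iq_jd}$ that otherwise lack valid variable indices; this is where the bulk of the delicate work will lie.
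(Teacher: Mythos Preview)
Your proposal captures the correct skeleton of the argument and names the right ingredients (box reduction via Theorems~\ref{6.9}/\ref{6.10}, refined genericity, the algebraic identities of Section~\ref{sAlg}, and Theorem~\ref{tBadg}), and you correctly anticipate that Theorems~\ref{t2.8self} and~\ref{t.2.8swapped} are there to handle the coincident pairings peculiar to the Pell family. In that sense the approach is the paper's.

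However, the way you assemble the tools contains a real gap. You write ``let $F:X\to\FF_2^{n_2(n_2+1)}$ \ldots\ by Theorem~\ref{tBadg} it suffices that $g:=dF\in\mathcal{A}(X)$ equidistributes.'' First, $F$ is not defined on $X$: the Artin pairing only makes sense once the Legendre matrix $a$ is fixed, so one must first pass to $X(a)$ (and show non-generic $a$ are negligible --- this is where Theorem~\ref{6.4}, not Chebotarev, is used). More seriously, Theorem~\ref{tBadg} is \emph{not} applied to $X$ or $X(a)$. Its bound carries the factor $2^{|X|-\prod(|X_i|-1)}$, which is astronomically large for boxes with $|X_i|$ of genuine size; applying it there yields nothing. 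In the paper, after fixing a generic $a$ and a character $F$, one chooses \emph{variable indices} $S$ (Definition~\ref{dVarInd}), reduces over $Q\in\prod_{i\in[k_{\mathrm{gap}}]\setminus S}X_i$, and then runs a hull/moment construction producing small product sets $Z\times[M]$ with each factor of size $M\approx(\log\log\log N)^{10}$. Only on these tiny auxiliary products is Theorem~\ref{tBadg} invoked, and there the exponent is harmless. The R\'edei symbols themselves are equidistributed via Chebotarev (Proposition~\ref{cheb}) over the governing field $L$ built from the $\phi_{p_1p_2,\ast}$'s, not via Theorem~\ref{6.4}. Your plan elides this entire variable-index/hull/moment layer, which is what makes the combinatorics of Theorem~\ref{tBadg} actually bite.
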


To prove this theorem, our first step is to reduce to sufficiently nice boxes $X$. We formalize this in our next definition.

\begin{mydef}
\label{dnice}
Let $r \geq 1$ be an integer, let $X = X_1 \times \ldots \times X_r$ be a box and let $N \geq 10^{10^{10}}$ be a real number. Put
\[
D_1 := e^{(\log \log N)^{1/10}}, \quad \eta := \sqrt{\log \log \log N}.
\]
We let $W$ be the maximal subset of $S_r(N)$ that is comfortably spaced above $D_1$, $\eta$-regular and disjoint from the sets $V_i$ in Proposition~\ref{6.10}. We call $X$ a nice box for $N$ if $X \subseteq S_r(N)$,
$X \cap W \neq \varnothing$ and
\begin{align}
\label{erbound}
\left|r - \frac{1}{2} \log \log N\right| \leq (\log \log N)^{2/3}.
\end{align}
\end{mydef}

\begin{prop}
\label{p7.3}
There are $A, N_0 > 0$ such that for all $N > N_0$, all nice boxes $X$ for $N$ and all integers $n_2 \geq n_3 \geq 0$ we have
\[
\left|\left|X \cap \mathcal{D}_{n_2, n_3}(N)\right| - Q(n_2|n_3) \cdot \left|X \cap \mathcal{D}_{n_2}(N)\right|\right| \leq \frac{A|X|}{\log \log \log N}.
\]
\end{prop}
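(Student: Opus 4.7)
The plan is as follows. Fix a nice box $X=X_1\times\cdots\times X_r\subseteq S_r(N)$; by Definition~\ref{dnice} every $D\in X$ is comfortably spaced above $D_1$, is $\eta$-regularly spaced, and the box is Siegel-less above $D_1$. Writing $D=q_1\cdots q_r$ with $q_i\in X_i$, I aim to show that, conditionally on the Legendre symbol data of $(q_1,\ldots,q_r)$ forcing $\rk_4\CL^+(D)=n_2$, the joint event $\rk_4\CL(D)=n_2$ and $\rk_8\CL^+(D)=n_3$ has density $Q(n_2|n_3)$, up to an error of size $O(|X|/\log\log\log N)$.

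First I would invoke Theorem~\ref{6.4}, applied as in the proof of Theorem~\ref{tFK}, to equidistribute on $X$ the matrix of Legendre symbols $\leg{q_i}{q_j}$ up to permutation of a bounded number of initial columns. This matrix determines $\rk_4\CL^+(D)$, and on the locus $X^{(n_2)}\subseteq X$ where it equals $n_2$ it also supplies measurable bases: positive divisors $a_1,\ldots,a_{n_2}$ of $D$ with $\chi_{a_i}\in 2\CL^+(D)^\vee[4]$ and divisors $b_1,\ldots,b_{n_2}$ of $D$ with $\B_D(b_j)\in 2\CL^+(D)[4]$. I then assemble the $(n_2+1)\times n_2$ matrix $M_D$ whose first $n_2$ rows are the Artin pairings $\langle\chi_{a_i},b_j\rangle_D$ and whose bottom row encodes an additional Artin pairing whose vanishing is equivalent to $\rk_4\CL(D)=\rk_4\CL^+(D)$. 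Standard reflection-theoretic linear algebra identifies ``$\rk_4\CL(D)=n_2$ and $\rk_8\CL^+(D)=n_3$'' with ``$M_D$ has rank $n_2-n_3$ and bottom row zero'', so it suffices to show that $M_D$ is $\FF_2$-equidistributed over $X^{(n_2)}$.

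The core equidistribution step combines Section~\ref{sAlg} and Section~\ref{sEqui}. By Proposition~\ref{prop:Redei8rk}, each entry of $M_D$ equals a R\'edei symbol of the shape $[a_i, D/a_i, b_j]$. For any index pairs $(i,i')$ and $(j,j')$ in $[r]$, Theorems~\ref{t2.8},~\ref{t2.8self},~\ref{t.2.8swapmin} and~\ref{t.2.8swapped} express the fourfold sum of such pairings under the swaps $q_i\leftrightarrow q_{i'}$ and $q_j\leftrightarrow q_{j'}$ as a single R\'edei symbol $[q_iq_{i'},q_jq_{j'},\star]$ depending only on $q_i,q_{i'},q_j,q_{j'}$ and fixed data. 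In the setup preceding Theorem~\ref{tBadg}, this says that if $F\colon X\to\FF_2$ records a fixed entry of $M_D$ then $dF\in\mathcal{A}(X)$ is an explicit function expressed through these two-variable R\'edei symbols. Proposition~\ref{6.3} shows that each such R\'edei symbol, viewed as a function on $X$, is Chebotarev-equidistributed modulo $\FF_2$ with an error $O(t_1^{-\delta})$, so $dF$ is $\epsilon$-bad on a vanishing fraction of $\mathcal{A}(X)$. Applying Theorem~\ref{tBadg} with $\epsilon$ a fixed negative power of $\log\log\log N$ upgrades this to $\epsilon$-equidistribution of $F$ itself, and a union bound over the $O(n_2^2)$ entries of $M_D$ yields the claimed bound.

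The main obstacle is the Pell-family degeneracy flagged in the introduction: for $D\in\mathcal{D}$ the positive divisors of $D$ parametrize both $2\CL^+(D)[4]$ and $2\CL^+(D)^\vee[4]$, so the bases $(a_i)$ and $(b_j)$ will generically overlap and produce pairs that fall outside the ``generic'' hypotheses of Smith's original algebraic identities. Theorems~\ref{t2.8self} and~\ref{t.2.8swapped}, which have no analogue in~\cite{Smith}, were introduced precisely to cover these non-generic configurations, and the bulk of the technical work is to verify that every pair $(a_i,b_j)$ arising from our chosen bases falls under one of the four identities of Section~\ref{sAlg}, while keeping track of admissibility of the resulting triples under all swaps. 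Siegel zeros cause no additional loss, thanks to the ``Siegel-less above $D_1$'' clause of Definition~\ref{dnice} combined with Theorem~\ref{6.10}.
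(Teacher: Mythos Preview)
Your outline identifies the right ingredients (the algebraic identities of Section~\ref{sAlg}, the combinatorial Theorem~\ref{tBadg}, and the equidistribution machinery of Section~\ref{sEqui}), and the overall shape mirrors what the paper does via the chain Proposition~\ref{p7.3} $\Leftarrow$ Proposition~\ref{p7.4} $\Leftarrow$ Proposition~\ref{p7.4b}.  However, there is a genuine gap in the step you describe as ``Proposition~\ref{6.3} shows that each such R\'edei symbol, viewed as a function on $X$, is Chebotarev-equidistributed \ldots\ so $dF$ is $\epsilon$-bad on a vanishing fraction of $\mathcal{A}(X)$.''

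First, Proposition~\ref{6.3} concerns only Legendre symbols; it says nothing about R\'edei symbols $[p_1p_2,q_1q_2,\star]$, whose governing fields have degree up to $8$ and require an explicit construction.  The paper builds the field $L$ (a compositum of minimally ramified $\phi_{p_1p_2,q_1q_2}$) and applies Proposition~\ref{cheb} directly, and this works only because of the \emph{extravagant spacing} at $k_{\textup{gap}}$: the prime $x_{i_2(F)}$ lies above $k_{\textup{gap}}$ while all of $Q\cup Z$ lies below it, so $\log t_{i_2(F)}$ is enormous compared to $\Delta_L$.  Your sketch never invokes extravagant spacing and never constructs $L$, so the Chebotarev step has no leverage.

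Second, your sentence about $dF$ being ``$\epsilon$-bad on a vanishing fraction of $\mathcal{A}(X)$'' is ill-posed: $dF$ is a single element of $\mathcal{A}(X)$, not a distribution.  The paper's mechanism is different and more delicate.  One does \emph{not} show $dF$ is random over all of $X$; instead one fixes $Q$, a small product set $Z=Z_{\textup{var}}^s$, and $Q_{\textup{gap}}$, leaving only the single coordinate $X_{i_2(F)}(a,Q\cup Q_{\textup{gap}}\cup Z)$ free.  Theorem~\ref{tBadg} furnishes a specific $g_{\textup{spec}}\in\mathcal{A}(Z\times[M])$ that is not $\epsilon$-bad; then the Chebotarev equidistribution (equation~\eqref{eChebAdd}) allows one to partition the free coordinate into blocks of size $M$ on each of which $d\widetilde{F}=g_{\textup{spec}}$.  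This last identity is exactly what Theorems~\ref{t2.8}--\ref{t.2.8swapped} provide, and the case analysis of which theorem applies is governed by the structure of $F$ and the choice of variable indices in Definition~\ref{dVarInd}.  Your proposal is missing this entire reduction (the choice of $i_1(F),i_2(F)$, the product sets $Z_{\textup{var}}^s$, and the moment computation for $\Lambda(x)$), without which Theorem~\ref{tBadg} cannot be brought to bear.
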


\begin{proof}[Proof that Proposition~\ref{p7.3} implies Theorem~\ref{t7.1}] From Erd\H os--Kac \cite[Proposition~3]{SieveErdosKac} it follows that we only need to consider $r$ satisfying \eqref{erbound}. For each such $r$, we apply Proposition~\ref{6.9} with $W$ as in Definition~\ref{dnice}; the required lower bound for $|W|$ follows from the material in Section \ref{sPrimes} and Proposition~\ref{6.10}.
\end{proof}

Given a box $X$ and $a:\M \rightarrow \pm 1$, our next step is to reduce to $X(a)$. However, it turns out that we can not prove equidistribution for all $a:\M \rightarrow \pm 1$, but only if $a$ is \emph{generic} in the following sense.

\begin{mydef}
For a field $K$ and for integers $a, b \geq 0$, we denote by $\Mat(K, a, b)$ the set of $a \times b$-matrices with coefficients in $K$. Let $\iota$ be the unique group isomorphism between $\pm 1$ and $\FF_2$. We put
\[
\M := \{(i,j):1\leq i< j\leq r\}, \quad \mathcal{N} = \varnothing.
\]
Given $a:\M \rightarrow \pm 1$, we associate a matrix $A \in \Mat(\mathbb{F}_2, r, r)$ by setting for all $i < j$
\[
A(i, j) = \iota \circ a(i, j), \quad A(j, i) = \iota \circ a(i, j)
\]
and finally
\[
A(i, i) = \iota \circ \prod_{j = 1}^r a(i, j).
\]
Think of $\FF_2^r$ as column vectors. We define the vector space
\[
\mathcal{V}_{a,2} = \{v \in \FF_2^r: v^T A = 0\} = \{v \in \FF_2^r : Av = 0\}.
\]
Let $R := (1, \ldots, 1)$, so that $R \in \mathcal{V}_{a,2}$. Put $n_2(a) := -1 + \dim_{\FF_2} \mathcal{V}_{a,2}$. 

Let $N$ be a large real and let $X = X_1 \times \ldots \times X_r$ be a nice box for $N$. Choose an index $k_{\textup{gap}}$ such that the extravagant spacing of $X$ is between $k_{\textup{gap}}$ and $k_{\textup{gap}} + 1$. Set
\[
n_{\textup{max}} := \left \lfloor \sqrt{2\log \log \log \log N} \right \rfloor,
\]
We say that $a:\M \rightarrow \pm 1$ is generic for $X$ if $n_2(a) \leq n_{\textup{max}}$ and furthermore we have for all $S \in \mathcal{V}_{a,2} \setminus \langle R \rangle$ and all $i \in \FF_2$ that
\begin{align}
\label{eGenVar}
\left|\left|\left\{j \in [r] : \frac{k_{\textup{gap}}}{2} \leq j \leq k_{\textup{gap}} \textup{ and } \pi_j(S) = i\right\}\right| - \frac{k_{\textup{gap}}}{4}\right| \leq 2^{-10n_{\textup{max}}} \cdot r
\end{align}
and
\begin{align}
\label{eGenCheb}
\left|\left|\left\{j \in [r] : k_{\textup{gap}} < j \leq 2k_{\textup{gap}} \textup{ and } \pi_j(S) = i\right\}\right| - \frac{k_{\textup{gap}}}{2}\right| \leq 2^{-10n_{\textup{max}}} \cdot r.
\end{align}
\end{mydef}

We shall prove that the Artin pairing $\text{Art}_2$ is equidistributed in $X(a)$ under favorable circumstances. For this reason we make the following definition.

\begin{mydef}
We say that a bilinear pairing
\[
\Art_2 : \mathcal{V}_{a,2} \times \mathcal{V}_{a,2} \rightarrow \FF_2
\]
is valid if the right kernel contains $(1, \ldots, 1)$. Fix a basis $w_1, \ldots w_{n_2}, R$ for $\mathcal{V}_{a,2}$. Using this basis we may identify $\Art_2$ with a $(n_2 + 1) \times (n_2 + 1)$ matrix with coefficients in $\FF_2$. Since $(1, \ldots, 1)$ is in the right kernel, we may also naturally identify $\Art_2$ with a $(n_2 + 1) \times n_2$ matrix. Finally define for a box $X$
\[
X(a, \Art_2) := \{x \in X(a) : \textup{ the Artin pairing of } x \textup{ equals } \Art_2\}.
\]
\end{mydef}

If $X = X_1 \times \dots \times X_r$ is a box with $D_1$ sufficiently large, we recall that $k$ is the largest index such that $|X_k| = 1$.

\begin{prop}
\label{p7.4}
There are $A, N_0 > 0$ such that for all $N > N_0$, all nice boxes $X$ for $N$, all integers $n_2 \geq 0$, all generic $a:\M \rightarrow \pm 1$ for $X$ with $n_2(a) = n_2$ and
\begin{align}
\label{eXjlarge}
|X_j(a, x_1 \cup \dots \cup x_k)| \geq \frac{1}{(\log t_{k + 1})^{100}} \cdot |X_j|
\end{align}
for all $k < j \leq r$, and all valid Artin pairings $\Art_2$, we have
\[
\left||X(a, \Art_2)| - 2^{-n_2(n_2 + 1)}|X(a)|\right| \leq \frac{A|X(a)|}{(\log \log \log N)^3}.
\]
Here we write $x_1, \ldots, x_k$ for the unique elements of $X_1, \ldots, X_k$.
\end{prop}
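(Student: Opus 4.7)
The plan is to prove Proposition~\ref{p7.4} by combining three ingredients already established in the paper: the algebraic four-term identities of Section~\ref{sAlg} (Theorems~\ref{t2.8}, \ref{t2.8self}, \ref{t.2.8swapmin}, \ref{t.2.8swapped}), the Chebotarev-type equidistribution of Section~\ref{sEqui}, and the combinatorial amplification supplied by Theorem~\ref{tBadg}. I will begin with Fourier inversion on the $\FF_2$-vector space of valid Artin pairings, which has dimension $n_2(n_2+1)$: the proposition reduces to bounding the character sum
\[
S_\chi := \sum_{x \in X(a)} (-1)^{\chi(\Art_2(x))}
\]
by $O(|X(a)|(\log\log\log N)^{-3})$ for each non-trivial $\FF_2$-linear functional $\chi$. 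Such a $\chi$ is specified by a collection of pairs $(u_\ell, v_\ell) \in \mathcal{V}_{a,2} \times \mathcal{V}_{a,2}$, and its value on $\Art_2$ equals $\sum_\ell \Art_2(u_\ell, v_\ell)$. Note that $n_2(n_2+1) = O(\log\log\log\log N)$, so the overhead from summing over the dual is harmlessly absorbed.

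Having fixed such a $\chi$, I will define $F: X(a) \to \FF_2$ by $F(x) = \sum_\ell \langle \chi_{A_\ell}, B_\ell \rangle_{D(x)}$, where $A_\ell, B_\ell$ are divisors of $D(x)$ corresponding to $u_\ell, v_\ell$ under the identification of $\mathcal{V}_{a,2}$ with $2\mathcal{C}^\vee(D)$ and $2\mathcal{C}(D)$. The core step is to apply the difference operator $d$ of Section~3 in a pair of coordinate indices $(i, j)$ chosen inside the ``Chebotarev range'' $(k_{\text{gap}}, 2k_{\text{gap}}]$. The genericity hypotheses~\eqref{eGenVar} and~\eqref{eGenCheb} ensure that for a large fraction of such $(i, j)$, every non-zero $S \in \mathcal{V}_{a,2} \setminus \langle R \rangle$ has $\pi_i(S)$ and $\pi_j(S)$ split so that the swap is non-degenerate. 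This is precisely where working in the Pell family causes trouble: the spaces encoding $\chi_A \in 2\CL^+(D)^\vee[4]$ and $\B_D(B) \in 2\CL^+(D)[4]$ coincide for $D \in \mathcal{D}$, so the swap argument of Smith~\cite{Smith} has no direct analogue here.

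For each choice of swap coordinates, one of Theorems~\ref{t2.8}, \ref{t2.8self}, \ref{t.2.8swapmin} or \ref{t.2.8swapped} applies, depending on whether the pair $(A_\ell, B_\ell)$ is swapped symmetrically, self-paired, or with the two primes exchanged; each rewrites the contribution to $dF(x_1, x_2)$ as a single Rédei symbol of the form $[P_1 P_2, Q_1 Q_2, c]$ in which the last entry is independent of the swap primes. I then invoke Proposition~\ref{cheb} for the Rédei extension cutting out each such symbol, which produces uniform distribution of $dF$ over the swap primes with an error comfortably smaller than $(\log\log\log N)^{-3}$. Proposition~\ref{pPointSub}, combined with the lower bound \eqref{eXjlarge}, controls the conditioning on the remaining coordinates and guarantees a dense set of fibres on which the Chebotarev input applies.

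The final step is Theorem~\ref{tBadg} applied with $\epsilon$ of order $(\log\log\log N)^{-3}$. The Chebotarev step above pins the realized value $g = dF$ to a region of $\mathcal{A}(X)$ whose density is much larger than the $\epsilon$-bad set allowed by Theorem~\ref{tBadg}; hence $g$ itself avoids being $\epsilon$-bad, which forces $F$ to be $\epsilon$-equidistributed and thereby delivers the required bound on $S_\chi$. The main obstacle will be verifying that the hypotheses of Proposition~\ref{pPointSub} and Theorem~\ref{6.4} hold for the selected index set and that the correct swap-identity is invoked in each regime. This bookkeeping is feasible precisely because of the refined genericity conditions \eqref{eGenVar}, \eqref{eGenCheb} and the self-paired identity Theorem~\ref{t2.8self}, which has no analogue in Smith's original work.
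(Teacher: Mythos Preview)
Your high-level architecture is the same as the paper's: Fourier inversion on the space of valid pairings to reduce to a character sum, then the four-term R\'edei identities of Section~\ref{sAlg} to compute $dF$, Chebotarev to control the distribution of $dF$, and Theorem~\ref{tBadg} to convert information about $dF$ into equidistribution of $F$. That skeleton is correct.

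There are, however, two structural points where your sketch diverges from what actually makes the argument go through. First, you place \emph{both} swap indices in the range $(k_{\text{gap}},2k_{\text{gap}}]$. In the paper the variable index set $S$ (Definition~\ref{dVarInd}) straddles the extravagant gap: all indices except $i_2(F)$ lie in $[k_{\text{gap}}/2,k_{\text{gap}}]$, and only $i_2(F)$ lies in $(k_{\text{gap}},2k_{\text{gap}}]$. The whole point of the extravagant spacing is that the single prime at $i_2(F)$ is so much larger than everything indexed below $k_{\text{gap}}$ that a Chebotarev estimate against the compositum of \emph{all} the relevant R\'edei fields $\phi_{p_1p_2,\ast}$ (built from primes in $Z\subseteq\prod_{i\in S'}X_i$ with $S'=S\cap[k_{\text{gap}}]$) is effective at that one index. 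If both indices sit above the gap, you have no handle on the discriminant of the governing field relative to the primes you are varying. This is also why condition~\eqref{eGenVar} (about indices below $k_{\text{gap}}$) and condition~\eqref{eGenCheb} (about indices above) are stated separately.

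Second, you treat the variable set as always a \emph{pair}, but the paper splits into three cases according to the shape of the character $F$: in the middle case (when some $c_{j_1,j_2}=1$ with $c_{j_2,j_1}=0$) one needs $|S(F)|=3$, and it is precisely this case that calls Theorem~\ref{t2.8}(ii) twice. The self-paired and symmetric cases use $|S(F)|=2$ with Theorems~\ref{t2.8self}, \ref{t.2.8swapmin}, \ref{t.2.8swapped}. Without this case split you cannot verify the key identity $d\widetilde F=g'_{\text{spec}}$.

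Finally, the paper does not argue, as you do, that ``the realized $g=dF$ lands in a dense region of $\mathcal A(X)$''. Instead it first invokes Theorem~\ref{tBadg} abstractly to \emph{choose} a single $g_{\text{spec}}$ that is not $\epsilon$-bad, and then uses Chebotarev equidistribution of Frobenius in $\Gal(L/K)\cong\mathcal A(Z)$ to \emph{partition} $X_{i_2(F)}(a,Q\cup Q_{\text{gap}}\cup Z)$ into ordered $M$-tuples each realizing exactly $g_{\text{spec}}$. This is packaged together with a moment computation for the counting function $\Lambda(x)$ over the auxiliary sets $Z_{\text{var}}^s$ (the ``hull'' construction), which is how one passes from $X(a,Q)$ down to product sets where Theorem~\ref{tBadg} applies. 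Your proposal has no analogue of this reduction, and without it the link between Theorem~\ref{tBadg} and the sum over $X(a)$ is not made.
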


\begin{proof}[Proof that Proposition~\ref{p7.4} implies Proposition~\ref{p7.3}]
Take $N$ to be a large integer and take $X$ to be a nice box for $N$. If $N$ is sufficiently large and $n_2 > n_{\text{max}}$, we have
\[
\lim_{k \rightarrow \infty} P(k|n_2) = O(\log \log \log N).
\]
Then it follows easily from the proof of Theorem~\ref{tFK} that
\[
\left|\left|X \cap \mathcal{D}_{n_2, n_3}(N)\right| - Q(n_2|n_3) \cdot \left|X \cap \mathcal{D}_{n_2}(N)\right|\right| \leq 2 \left|X \cap \mathcal{D}_{n_2}(N)\right| \leq \frac{A|X|}{\log \log \log N}
\]
for a sufficiently large constant $A > 0$. From now on suppose that $n_2 \leq n_{\text{max}}$. We deduce from Hoeffding's inequality that the proportion of $S$ in $\FF_2^r$ failing equation~\eqref{eGenVar} or equation~\eqref{eGenCheb} is bounded by
\[
O\left(\exp\left(-2^{-20n_{\text{max}}^2} \cdot k_{\text{gap}}\right)\right).
\]
Given $S \not \in \langle R \rangle$, the proportion of $a:\M \rightarrow \pm 1$ with $S \in \mathcal{V}_{a,2}$ is $O(0.5^r)$. Taking the union over all $S$ in $\FF_2^r$ failing equation~\eqref{eGenVar} or equation~\eqref{eGenCheb} proves that the proportion of non-generic $a$ is at most
\[
O\left(\exp\left(-2^{-20n_{\text{max}}^2} \cdot k_{\text{gap}}\right)\right).
\]
Put $k_2 := \lfloor0.25k_{\text{gap}}\rfloor$. Then we have for all $\sigma \in \mathcal{P}(k_2)$ that $a:\M \rightarrow \pm 1$ is generic if and only if $\sigma(a)$ is generic, where $\sigma(a)$ is defined in the natural way. Theorem~\ref{6.4} implies that
\[
\sum_{a:\M \rightarrow \pm 1} \left|2^{-|\M|} \cdot |X| - \frac{1}{k_2!} \sum_{\sigma \in \mathcal{P}(k_2)} |X(\sigma(a))|\right| \leq (k_2^{-\delta_1} + t_{k + 1}'^{-\delta_2}) \cdot |X|,
\]
where $\delta_1$ and $\delta_2$ are small, positive absolute constants. Restricting this sum to the non-generic $a$ shows that the union of $X(a)$ over all non-generic $a$ is within the error term of Proposition~\ref{p7.3}. We now deal with the $a:\M \rightarrow \pm 1$ that fail equation~\eqref{eXjlarge}. Let $j$ be an integer satisfying $k < j \leq r$. We say that $a, a': \M \rightarrow \pm 1$ are equivalent at $j$, which we write as $a \sim_j a'$, if $a(i, j) = a'(i, j)$ for all $1 \leq i \leq k$. Since our box is $\eta$-regular, we see that $k$ is roughly equal to $\log \log D_1$. In particular if $N$ is sufficiently large, we get
\[
k \leq 2 \log \log D_1 = \frac{1}{5} \log \log \log N.
\]
Then there are at most $2^{\frac{1}{5} \log \log \log N}$ equivalence classes. Furthermore, if $a:\M \rightarrow \pm 1$ is such that equation~\eqref{eXjlarge} fails for some fixed $j$, we have that
\[
\left|\bigcup_{a': a \sim_j a'} X(a')\right| \leq \frac{1}{(\log t_{k + 1})^{100}} \cdot |X|,
\]
where the union is over all $a':\M \rightarrow \pm 1$ equivalent to $a:\M \rightarrow \pm 1$ at $j$. Summing this over all choices of $j$ and all equivalence classes, we stay within the error term of Proposition~\ref{p7.3}. So far we have shown
\begin{multline*}
\left|\left|X \cap \mathcal{D}_{n_2, n_3}(N)\right| - Q(n_2|n_3) \cdot \left|X \cap \mathcal{D}_{n_3}(N)\right|\right| \\
\leq 
\sum_{\substack{a \text{ generic} \\ a \text{ sat. eq. } \eqref{eXjlarge} \\ n_2(a) = n_2}} \sum_{\substack{\text{rk}(\text{Art}_2) = n_2 - n_3 \\ \text{Art}_2 \text{ valid}}} \left||X(a, \Art_2)| - 2^{-n_2(n_2 + 1)}|X(a)|\right| + \frac{A|X|}{\log \log \log N}.
\end{multline*}
Note that we could have further restricted the sum over Artin pairings to only those with bottom row identically $0$. However, the displayed inequality suffices for our purposes. We now apply Proposition~\ref{p7.4} for every generic $a:\M \rightarrow \pm 1$ for $X$ such that it satisfies equation~\eqref{eXjlarge} and $n_2(a) = n_2$, and all valid Artin pairings $\text{Art}_2$ with $\text{rk}(\text{Art}_2) = n_2 - n_3$. Since there are at most
\[
2^{n_2(n_2 + 1)} \leq 2^{n_{\text{max}}(n_{\text{max}} + 1)}
\]
valid Artin pairings, we get
\[
\sum_{\substack{a \text{ generic} \\ a \text{ sat. eq. } \eqref{eXjlarge} \\ n_2(a) = n_2}} \hspace{-0.1cm} \sum_{\substack{\text{rk}(\text{Art}_2) = n_2 - n_3 \\ \text{Art}_2 \text{ valid}}} \hspace{-0.1cm} \left||X(a, \Art_2)| - 2^{-n_2(n_2 + 1)}|X(a)|\right| \leq 2^{n_{\text{max}}(n_{\text{max}} + 1)} \frac{A|X|}{(\log \log \log N)^3}
\]
as desired.
\end{proof}

\begin{mydef}
Let $X$ be a box and $Y \subseteq X$ a subset. Let $S \subseteq [r]$ and let $Q \in \prod_{i \in S} X_i$. We define
\[
Y(Q) := \{y \in Y : \pi_{S}(y) = Q\}.
\]
We shall slightly abuse notation by writing $X(a, Q)$ for $X(a)(Q)$. If $i \not \in S$, we also define for a subset $Z \subseteq \prod_{i \in S} X_i$
\[
X_i(a, Z) := \left\{x \in X_i : \textup{ for all } j \in S, Q \in Z \textup{ we have } \left(\frac{x}{\pi_j(Q)}\right) = a(i, j)\right\}.
\]
Note that this is a natural generalization of $X_j(a, Q)$ as defined in Definition~\ref{dMa}.
\end{mydef}

In our next definition we introduce variable indices, which are by definition certain subsets $S$ of $[r]$. At the very end of this section we will reduce to the case where we have chosen one element $x_i \in X_i$ for all $i \in [r] - S$, whence the terminology.

\begin{mydef}
\label{dVarInd}
Let $a:\M \rightarrow \pm 1$. Recall that we fixed a basis $w_1, \ldots w_{n_2}, R$ for $\mathcal{V}_{a,2}$. Let $1 \leq j_1 \leq n_2 + 1$ and let $1 \leq j_2 \leq n_2$. 
Let $E_{j_1, j_2}$ be the $(n_2 + 1) \times n_2$-matrix with $E_{j_1, j_2}(j_1, j_2) = 1$ and $0$ otherwise, and let $F_{j_1, j_2}$ be the dual basis. Any non-zero multiplicative character $F: \Mat(\FF_2, n_2 + 1, n_2) \rightarrow \pm 1$ can be written as
\[
F = \iota^{-1} \circ \sum_{\substack{1 \leq j_1 \leq n_2 + 1 \\ 1 \leq j_2 \leq n_2}} c_{j_1, j_2} F_{j_1, j_2}
\]
with not all $c_{j_1, j_2}$ zero. A set $S \subseteq [r]$ is called a set of variable indices for $F$ if there are $i_1(F), i_2(F) \in S$ such that
\[
\frac{k_{\textup{gap}}}{2} \leq i \leq k_{\textup{gap}} \textup{ for all } i \in S \setminus \{i_2(F)\}, \quad k_{\textup{gap}} < i_2(F) \leq 2k_{\textup{gap}}
\]
and
\begin{itemize}
\item if $c_{n_2 + 1, j_2} = 0$ for all $1 \leq j_2 \leq n_2$ and $c_{j_1, j_1} = 0$ for all $1 \leq j_1 \leq n_2$ and $c_{j_1, j_2} = 0$ implies $c_{j_2, j_1} = 0$ for all $1 \leq j_1, j_2 \leq n_2$, we choose any pair $(j_1, j_2)$ such that $c_{j_1, j_2} = 1$. Furthermore, choose $|S(F)| = 2$,
\[
i_1(F) \in \bigcap_{i \neq j_1} \{j \in [r] : \pi_j(w_i) = 0\} \cap \{j \in [r] : \pi_j(w_{j_1}) = 1\}
\]
and
\[
i_2(F) \in \bigcap_{i \neq j_2} \{j \in [r] : \pi_j(w_i) = 0\} \cap \{j \in [r] : \pi_j(w_{j_2}) = 1\};
\]
\item if there are $1 \leq j_1, j_2 \leq n_2$ such that $c_{j_1, j_2} = 1$ and $c_{j_2, j_1} = 0$, choose such a pair $(j_1, j_2)$. Next choose $|S(F)| = 3$ and
\[
S(F) \subseteq \bigcap_{i \not \in \{j_1, j_2\}} \{j \in [r] : \pi_j(w_i) = 0\}
\]
and
\[
S(F) \cap \{j \in [r] : \pi_j(w_{j_1}) = 1, \pi_j(w_{j_2}) = 0\} = \{i_1(F)\}
\]
and
\[
S(F) \cap \{j \in [r] : \pi_j(w_{j_2}) = 1, \pi_j(w_{j_1}) = 0\} = \{i_2(F)\}
\]
and
\[
S(F) \cap \{j \in [r] : \pi_j(w_{j_1}) = 1, \pi_j(w_{j_2}) = 1\} = \varnothing;
\]
\item in all other cases, choose a pair $(j_2, j_2)$ such that $c_{j_2, j_2} = 1$ or choose a pair $(n_2 + 1, j_2)$ such that $c_{n_2 + 1, j_2} = 1$. We pick $|S(F)| = 2$ and
\[
i_1(F) \in \bigcap_{i  \neq j_2} \{j \in [r] : \pi_j(w_i) = 0\} \cap \{j \in [r] : \pi_j(w_{j_2}) = 1\}
\]
and
\[
i_2(F) \in \bigcap_{i = 1}^{n_2} \{j \in [r] : \pi_j(w_i) = 0\}.
\]
\end{itemize}
\end{mydef}

If $a:\M \rightarrow \pm 1$ is generic for $X$, we will now show that one can find variable indices provided that $r$ is sufficiently large. Our essential tool is the following combinatorial lemma.

\begin{lemma}
Assume that $a:\M \rightarrow \pm 1$ is generic for $X$. If $w_1, \ldots, w_d, R \in \mathcal{V}_{a,2}$ are linearly independent, then we have for all $\mathbf{v} \in \FF_2^d$
\[
\left|\left|\left\{i \in [r] : \frac{k_{\textup{gap}}}{2} \leq i \leq k_{\textup{gap}} \textup{ and } \pi_i(w_j) = \pi_j(\mathbf{v}) \textup{ for all } 1 \leq j \leq d\right\}\right| - \frac{k_{\textup{gap}}}{2^{d + 1}}\right| \leq 3^d \cdot 2^{-10n_{\textup{max}}} \cdot r.
\]
\end{lemma}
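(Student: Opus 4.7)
The plan is to reduce the count to a Fourier sum over $\FF_2^d$, where each non-trivial character will contribute an error term controlled by hypothesis \eqref{eGenVar} applied to a suitable $\FF_2$-linear combination of the $w_j$'s. Writing $w_T := \sum_{j \in T} w_j$ for $T \subseteq [d]$ and using the standard orthogonality relation
\[
\mathbf{1}[\mathbf{u} = \mathbf{v}] = \frac{1}{2^d} \sum_{T \subseteq [d]} (-1)^{\sum_{j \in T}(\pi_j(\mathbf{u}) + \pi_j(\mathbf{v}))},
\]
I would apply this to $\mathbf{u} = (\pi_i(w_1), \dots, \pi_i(w_d))$ and sum over $i$ in the relevant range, obtaining
\[
\#\bigl\{i \in [r] : \tfrac{k_{\textup{gap}}}{2} \leq i \leq k_{\textup{gap}},\ \pi_i(w_j) = \pi_j(\mathbf{v}) \ \forall j\bigr\} = \frac{1}{2^d} \sum_{T \subseteq [d]} (-1)^{\sum_{j \in T} \pi_j(\mathbf{v})} \sum_{i} (-1)^{\pi_i(w_T)}.
\]

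The $T = \emptyset$ term contributes $\tfrac{1}{2^d} \cdot \#\{i : k_{\textup{gap}}/2 \leq i \leq k_{\textup{gap}}\} = \tfrac{k_{\textup{gap}}}{2^{d+1}} + O(1)$, which is the claimed main term up to an $O(1)$ rounding error. For each non-empty $T \subseteq [d]$, the linear independence of $w_1, \ldots, w_d, R$ over $\FF_2$ forces $w_T \notin \{0, R\} = \langle R \rangle$, so $w_T \in \mathcal{V}_{a,2} \setminus \langle R \rangle$ and hypothesis \eqref{eGenVar} applies. Estimating the fibers $\pi_i(w_T) = 0$ and $\pi_i(w_T) = 1$ separately yields
\[
\Bigl|\sum_{i : k_{\textup{gap}}/2 \leq i \leq k_{\textup{gap}}} (-1)^{\pi_i(w_T)}\Bigr| \leq 2 \cdot 2^{-10n_{\textup{max}}} \cdot r.
\]

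Summing the $2^d - 1$ non-empty contributions and combining with the $O(1)$ rounding error from the main term gives a total error of at most $\tfrac{2^d-1}{2^d} \cdot 2 \cdot 2^{-10n_{\textup{max}}} \cdot r + O(1)$, which is comfortably below $3^d \cdot 2^{-10n_{\textup{max}}} \cdot r$ once $N$ is sufficiently large, since $r \asymp \log \log N$ while $2^{-10n_{\textup{max}}}$ decays only polylogarithmically. I do not anticipate any serious obstacle here: the only conceptual point is the observation that every non-trivial $\FF_2$-combination of the $w_j$'s avoids $\langle R \rangle$, which is immediate from the linear independence assumption, and everything else is a routine Fourier computation. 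The factor $3^d$ in the statement is thus a generous slack.
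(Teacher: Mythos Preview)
Your argument is correct. The key observation---that every nontrivial $\FF_2$-linear combination $w_T$ of $w_1,\dots,w_d$ lies in $\mathcal{V}_{a,2}\setminus\langle R\rangle$ by the assumed linear independence of $w_1,\dots,w_d,R$---is exactly what makes \eqref{eGenVar} applicable to every nontrivial character, and the rest is a clean orthogonality computation. The $O(1)$ rounding term from the interval length is harmless in context (it is absorbed since $2^{-10n_{\textup{max}}}r\to\infty$); if you want to avoid invoking ``$N$ large'' here, you can instead bound $\bigl|\,\#\{i:k_{\textup{gap}}/2\le i\le k_{\textup{gap}}\}-k_{\textup{gap}}/2\,\bigr|$ by summing the two fiber estimates from \eqref{eGenVar} for any single $w_j$, which gives $2\cdot 2^{-10n_{\textup{max}}}r$ directly.

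The paper takes a different route: it argues by induction on $d$, with base case $d=1$ being \eqref{eGenVar} verbatim, and for the inductive step it writes $2\bigl|g(\mathbf{v})-k_{\textup{gap}}/2^{d+1}\bigr|$ as a combination of four terms, three of which collapse one of the last two coordinates (reducing to $d-1$ conditions on $w_1,\dots,w_{d-2}$ together with one of $w_{d-1},w_d,w_{d-1}+w_d$) and one of which collapses both (reducing to $d-2$ conditions). This recursion is what produces the factor $3^d$. Your Fourier approach is more direct and in fact yields the sharper uniform bound $\approx 2\cdot 2^{-10n_{\textup{max}}}r$ independent of $d$; the paper's $3^d$ is, as you noted, slack coming from the inductive bookkeeping rather than from the underlying estimate. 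Either method suffices for the application, since $d\le n_{\textup{max}}$ throughout.
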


\begin{proof}
We proceed by induction on $d$. The base case $d = 1$ follows immediately from equation~\eqref{eGenVar}. Now suppose that $d > 1$. We define for $\mathbf{w} \in \FF_2^d$
\[
g(\mathbf{w}) = \left|\left\{i \in [r] : \frac{k_{\textup{gap}}}{2} \leq i \leq k_{\textup{gap}} \textup{ and } \pi_i(w_j) = \pi_j(\mathbf{w}) \textup{ for all } 1 \leq j \leq d\right\}\right|.
\]
Let $\mathbf{v} \in \FF_2^d$ be given. Let $\mathbf{v}_1, \mathbf{v}_2, \mathbf{v}_3$ be the three unique pairwise distinct vectors such that $\pi_{d - 2}(\mathbf{v}_i) = \pi_{d - 2}(\mathbf{v})$ and $\mathbf{v}_i \neq \mathbf{v}$. We have
\begin{align*}
2\left|g(\mathbf{v}) - \frac{k_{\textup{gap}}}{2^{d + 1}}\right| &\leq \left|3g(\mathbf{v}) + \sum_{i = 1}^3 g(\mathbf{v}_i) - \frac{3k_{\textup{gap}}}{2^d}\right| + \left|\frac{k_{\textup{gap}}}{2^{d - 1}} - g(\mathbf{v}) - \sum_{i = 1}^3 g(\mathbf{v}_i)\right| \\
&\leq \sum_{i = 1}^3 \left|g(\mathbf{v}) + g(\mathbf{v}_i) - \frac{k_{\textup{gap}}}{2^d}\right| + \left|\frac{k_{\textup{gap}}}{2^{d - 1}} - g(\mathbf{v}) - \sum_{i = 1}^3 g(\mathbf{v}_i)\right|.
\end{align*}
Now apply the induction hypothesis.
\end{proof}

With this lemma it is straightforward to find variable indices provided that $a$ is generic for $X$ and $r$ is sufficiently large. We can now formulate our next reduction step. For a subset $T \subseteq [r]$, a point $P \in \prod_{i \in T} X_i$ and $a:\M \rightarrow \pm 1$, we say that $P$ is consistent with $a$ if
\[
\left(\frac{\pi_i(P)}{\pi_j(P)}\right) = a(i, j)
\]
for all distinct $i, j \in T$ with $i < j$.

\begin{prop}
\label{p7.4b}
There are $A, N_0 > 0$ such that for all $N > N_0$, all nice boxes $X$ for $N$, all integers $n_2 \geq 0$, all generic $a:\M \rightarrow \pm 1$ for $X$ with $n_2(a) = n_2$, all non-zero multiplicative characters $F$ from $\Mat(\FF_2, n_2 + 1, n_2)$ to $\FF_2$, all sets of variable indices $S$ for $F$ and all $Q \in [k_{\textup{gap}}] - S$ consistent with $a$ such that
\begin{align}
\label{eXjaPlarge}
|X_j(a, Q)| \geq 4^{-k_{\textup{gap}}} \cdot |X_j|
\end{align}
for all $j \in S$, we have
\[
\left|\sum_{x \in X(a, Q)} F(\Art_2(x))\right| \leq \frac{A|X(a, Q)|}{(\log \log \log N)^3}.
\]
\end{prop}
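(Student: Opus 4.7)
The approach is a van der Corput--Gowers type iteration coupled with the algebraic collapse of Section~\ref{sAlg}. Write $\phi(x) := F(\Art_2(x))$, viewed as $\pm 1$-valued via $\iota$, so that the target is to bound $|\sum_{x \in X(a, Q)} \phi(x)|$. The free coordinates in $X(a, Q)$ are those indexed by $S(F)$ together with positions $k_{\textup{gap}} < i \leq r$ not in $S(F)$; the rest are fixed by $Q$.

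First I would apply Cauchy--Schwarz $|S(F)| \in \{2,3\}$ times, unfreezing only one of the variable indices $i_1(F), i_2(F)$ (and $i_3(F)$ when $|S(F)| = 3$) at each step, to obtain a bound of the shape
\[
\Bigl|\sum_{x \in X(a, Q)} \phi(x)\Bigr|^{2^{|S(F)|}} \leq (\textup{prefactor}) \cdot \sum_{\text{boxes}} \prod_{\varepsilon \in \{0,1\}^{|S(F)|}} \phi(x^{\varepsilon}),
\]
where each ``box'' is a family of $2^{|S(F)|}$ configurations obtained by toggling the variable coordinates between two chosen candidates while keeping all other free coordinates common. Since $\prod_\varepsilon \phi(x^\varepsilon) = \iota^{-1}\bigl(\sum_\varepsilon F(\Art_2(x^\varepsilon))\bigr)$, the inner expression is precisely a $\FF_2$-linear combination of Artin pairings over the box, which is exactly the object computed in Section~\ref{sAlg}.

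Next, I would collapse each box sum algebraically. The three cases in Definition~\ref{dVarInd} align the structure of $F = \iota^{-1} \circ \sum c_{j_1, j_2} F_{j_1, j_2}$ with the hypotheses of the appropriate theorem(s) among Theorems~\ref{t2.8}--\ref{t.2.8swapped}: by design, varying $x_{i_1(F)}$ modifies the argument $\chi_{\prod_k x_k^{\pi_k(w_{j_1})}}$ only through multiplication by $x_{i_1(F)}$, and similarly for the other variable indices, matching the $p_ia$, $q_jb$ structure in the algebraic theorems. Consequently the inner box sum $\sum_\varepsilon F(\Art_2(x^\varepsilon))$ collapses to either $0$ or to a single Rédei symbol of the form $[x_{i_1}x_{i_1}', x_{i_2}x_{i_2}', c]$, where $c$ is independent of the toggled primes (concretely, $c \in \{b, -1, p_1p_2\}$ depending on whether Case~1 (Theorem~\ref{t.2.8swapped}), Case~3 with diagonal coefficients (Theorem~\ref{t2.8self}), Case~3 with last-row coefficients (Theorem~\ref{t2.8}), or the asymmetric Case~2 (Theorem~\ref{t2.8}, applied twice) is triggered). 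After disposing of the trivial zero contributions via Theorems~\ref{t2.8}, \ref{t.2.8swapmin}, we are reduced to bounding $T = \sum (-1)^{[x_{i_1}x_{i_1}', x_{i_2}x_{i_2}', c]}$, where the sum runs over the toggled primes and the remaining free coordinates.

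Finally, for fixed auxiliary primes I would regard the Rédei symbol as the Artin symbol of $x_{i_2}$ in a prescribed degree-two subextension of the dihedral field $L \in \mathcal{F}_{x_{i_1}x_{i_1}', x_{i_2}'}^{\mathrm{unr}}$, and invoke Proposition~\ref{cheb} to sum over $x_{i_2}$, with Siegel-lessness of the nice box $X$ above $D_1$ controlling exceptional zeros and Proposition~\ref{larges} handling small-prime contributions from $[k_{\textup{gap}}]$. Proposition~\ref{pPointSub} is used to verify that conditioning on $Q$ and on the auxiliary primes preserves the hypothesis~\eqref{eXjaPlarge} in the form needed for Chebotarev. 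The main obstacle is the algebraic step: producing, uniformly across the three cases of Definition~\ref{dVarInd}, a variable-index configuration for which the identities of Section~\ref{sAlg} apply verbatim and for which the resulting Rédei symbol is generic enough that the dihedral extension in the Chebotarev step is nontrivial for a positive proportion of the auxiliary primes --- a density that the genericity conditions~\eqref{eGenVar} and~\eqref{eGenCheb} are precisely engineered to supply.
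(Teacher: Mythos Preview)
Your approach is genuinely different from the paper's and is worth comparing. The paper does \emph{not} iterate Cauchy--Schwarz. Instead it (i) covers the small--variable part $X'=\prod_{i\in S'}X_i(a,Q)$ by many side--$M$ sub-boxes $Z_{\text{var}}^s$ (with $M\approx(\log\log\log N)^{10}$) using Smith's Ramsey-type Proposition~4.1 and a first/second moment argument for the counting function $\Lambda(x)$, (ii) passes to the product space $\text{Hull}(Z^s_{\text{var}})$, (iii) builds a single large field $L$ with $\Gal(L/K)\cong\mathcal{A}(Z)$ and applies Chebotarev once to equidistribute $\Frob_{L/K}(x_{i_2(F)})$ in $\mathcal{A}(Z)$, and (iv) invokes Theorem~\ref{tBadg} to pick a non-$\epsilon$-bad $g_{\text{spec}}\in\mathcal{A}(Z\times[M])$, then partitions $X_{i_2(F)}$ so that the resulting $d\widetilde F$ equals $g_{\text{spec}}$; the identities of Section~\ref{sAlg} are used only at the very end to check $d\widetilde F=g_{\text{spec}}$. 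Your route instead collapses each $2^{|S(F)|}$-box directly to a single R\'edei symbol and bounds its average by Chebotarev. This bypasses Theorem~\ref{tBadg} and the $Z^s_{\text{var}}$/moment machinery entirely, which is attractive; the paper's architecture, on the other hand, is what makes Smith's method extend to higher $2^k$-ranks.

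Two points in your sketch need attention. First, $X(a,Q)$ is not a product over the free coordinates: the primes at positions $>k_{\text{gap}}$ carry Legendre constraints against the toggled primes at $i_1(F),i_2(F)$, so a naive Cauchy--Schwarz over $S(F)$ does not cleanly produce boxes inside $X(a,Q)$. The paper handles this by first passing to $\text{Hull}(Z^s_{\text{var}})$, which \emph{is} a product; in your scheme you would have to sum the common coordinates out first (using Proposition~\ref{6.3} to see the count is essentially independent of the toggled primes) before applying Chebotarev at position $i_2(F)$, and the extravagant spacing is exactly what makes the remaining conductor (from $Q$ and the toggles at $i_1(F)$) small enough. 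Second, the governing field you name, $\mathcal{F}^{\text{unr}}_{x_{i_1}x_{i_1}',\,x_{i_2}'}$, is not the right object: after R\'edei reciprocity and linearity the oscillation of $[p_1p_2,q_1q_2,c]$ in $q_1$ is read off from $\mathcal{F}^{\text{unr}}_{p_1p_2,\,c}$ (with $c\in\{-1,\,p_1p_2,\,b\}$ according to the case), and one must check that this dihedral/$C_4$ extension is not contained in the multiquadratic field cut out by the Legendre constraints from $Q$ --- it is not, since the latter is abelian over $\Q$ and the former is not, but this is the step where the argument earns its keep.
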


\begin{proof}[Proof that Proposition~\ref{p7.4b} implies Proposition~\ref{p7.4}]
Let $F$ be a non-zero multiplicative character from $\Mat(\FF_2, n_2 + 1, n_2)$ to $\FF_2$. We claim that there exist absolute constants $A', N_0' > 0$ such that for all $N > N_0'$
\begin{align}
\label{eInt}
\left|\sum_{x \in X(a)} F(\Art_2(x))\right| \leq \frac{A'|X(a)|}{(\log \log \log N)^{3}}.
\end{align}
Once we establish equation~\eqref{eInt}, Proposition~\ref{p7.4} follows easily. There exist $j_1$ and $j_2$ such that $F$ depends minimally on $(j_1, j_2)$. Take a set of variable indices $S$ for $(j_1, j_2)$. We split the sum in equation~\eqref{eInt} over all $Q \in [k_{\text{gap}}] - S$ consistent with $a$. If $Q$ satisfies equation~\eqref{eXjaPlarge} for all $j \in S$, we apply Proposition~\ref{p7.4b} with this $(j_1, j_2)$ and $S$. It remains to bound
\begin{align}
\label{eBadP}
\sum_{\substack{Q \in [k_{\text{gap}}] - S \\ Q \text{ consistent with } a \\ Q \text{ fails eq. } \eqref{eXjaPlarge}}} |X(a, Q)|.
\end{align}
But this follows quickly from an application of Proposition~\ref{pPointSub} with the prebox
\[
(X_{k + 1}(a, P) \times \dots \times X_r(a, P), P),
\]
where $P$ is the union of $x_1, \dots, x_k$. Note that we make crucial usage of equation~\eqref{eXjlarge} to validate the fourth condition of Proposition~\ref{pPointSub}.
\end{proof}

It remains to prove Proposition~\ref{p7.4b}, which we shall do now.

\begin{proof}[Proof of Proposition~\ref{p7.4b}] Put
\[
M := \left\lfloor (\log \log \log N)^{10} \right\rfloor, \quad S' := [k_{\text{gap}}] \cap S, \quad m := |S'|.
\]
Define
\[
X' := \prod_{i \in S'} X_i(a, Q),
\]
and
\[
Y := \left\{x \in X' : x \text{ is consistent with } a\right\}.
\]
Also set $R := \lfloor\exp\left(\exp\left(0.2k_{\text{gap}}\right)\right)\rfloor$. We let $Z_{\text{var}}^1, \ldots, Z_{\text{var}}^t$ be a longest sequence of subsets of $X'$ satisfying
\begin{itemize}
\item we have for all $1 \leq s \leq t$ the equality
\[
Z_{\text{var}}^s = \prod_{i \in S'} Z_i^s
\]
for some subset $Z_i^s$ of $X_i(a, Q)$ with cardinality $M$;
\item we have $Z_{\text{var}}^s \subseteq Y$ and every $y \in Y$ is in at most $R$ different $Z_{\text{var}}^s$;
\item for all distinct $1 \leq s, s' \leq t$ we have $\left|Z_{\text{var}}^s \cap Z_{\text{var}}^{s'}\right| \leq 1$.
\end{itemize}
Define $Y_{\text{bad}}$ as
\[
Y_{\text{bad}} := \left\{y \in Y : \left|\left\{1 \leq s \leq t : y \in Y_{\text{bad}} \right\}\right| < R\right\}
\]
and let $\delta$ be the density of $Y_{\text{bad}}$ in $X'$. With a greedy algorithm, we can construct a subset $W$ of $Y_{\text{bad}}$ of density at least $\delta/RM^m$ such that $|W \cap Z_{\text{var}}^s| \leq 1$ for all $s$. If there were to be subsets $Z_i \subseteq X_i(a, Q)$ for each $i \in S'$ satisfying $|Z_i| = M$ and
\[
\prod_{i \in S'} Z_i \subseteq W,
\]
we could extend our sequence $Z_{\text{var}}^1, \ldots, Z_{\text{var}}^t$ to a longer sequence. Hence we may apply the contrapositive of Proposition~4.1 of Smith \cite{Smith} to infer
\[
M > \frac{\exp(0.3k_{\text{gap}})}{5 \log(RM^m/\delta)},
\]
since $|X_i(a, Q)| \geq \exp(\exp(0.3k_{\text{gap}}))$ for sufficiently large $N$ thanks to equation~\eqref{eXjaPlarge} and the regular spacing. This yields
\begin{align}
\label{eUpperd}
\delta < \frac{RM^m}{\exp\left(\frac{\exp(0.3k_{\text{gap}})}{5M}\right)} \leq \exp(-0.25\exp(k_{\text{gap}}))
\end{align}
if $N$ is sufficiently large. A straightforward application of the Chebotarev Density Theorem, see Theorem~\ref{cheb}, shows that for $i > k_{\text{gap}}$
\begin{align}
\label{eCheb1}
|X_i(a, Q \cup Z)| = \frac{|X_i(a, Q)|}{2^{(M - 1)^m}} \left(1 + O\left(e^{-2k_{\text{gap}}}\right)\right),
\end{align}
where we made use of the extravagant spacing of $k_{\text{gap}}$. Then Proposition~\ref{6.3} implies that for each $y \in Y$ the quantity $X(a, Q \times \{y\})$ is of the expected size. Hence equation~\eqref{eUpperd} implies that
\[
\left|\sum_{\substack{x \in X(a, Q) \\ \pi_{S'}(x) \in Y_{\text{bad}}}} F(\Art_2(x))\right| \leq \sum_{\substack{x \in X(a, Q) \\ \pi_{S'}(x) \in Y_{\text{bad}}}} 1
\]
is easily within the error of our proposition. Given $Z_{\text{var}}^s$, we define
\[
\text{Hull}(Z_{\text{var}}^s) := \{Q\} \times Z_{\text{var}}^s \times \prod_{j \in [r] - [k_{\text{gap}}]} X_j(a, Q \cup Z_{\text{var}}^s).
\]
For each $x \in X(a, Q)$ with $\pi_{S'}(x) \not \in Y_{\text{bad}}$ we define the counting function
\[
\Lambda(x) := \left|\left\{1 \leq s \leq t: x \in \text{Hull}(Z_{\text{var}}^s)\right\}\right|.
\]
We shall compute the first and second moment of $\Lambda(x)$. Since the second moment will turn out to be approximately the square of the first moment, we see that the value of $\Lambda(x)$ is roughly constant. Then we shall use this to reduce to spaces of the shape $\text{Hull}(Z_{\text{var}}^s) \cap X(a, Q)$.

We start by computing the first moment as follows
\begin{align*}
\sum_{\substack{x \in X(a, Q) \\ \pi_{S'}(x) \not \in Y_{\text{bad}}}} \Lambda(x) 
&= \sum_{y \in Y \setminus Y_{\text{bad}}} \sum_{\substack{x \in X(a, Q) \\ \pi_{S'}(x) = y}} \sum_{1 \leq s \leq t} \mathbf{1}_{x \in \text{Hull}(Z_{\text{var}}^s)} \\
&= \sum_{y \in Y \setminus Y_{\text{bad}}} \sum_{1 \leq s \leq t} \left|X(a, Q) \cap \text{Hull}(Z_{\text{var}}^s) \cap \pi_{S'}^{-1}(y)\right|.
\end{align*}
The last expression is obviously $0$ if $y \not \in Z_{\text{var}}^s$. If $y \in Z_{\text{var}}^s$, we make an appeal to equation~\eqref{eCheb1} and Proposition~\ref{6.3} to deduce
\[
\left|X(a, Q) \cap \text{Hull}(Z_{\text{var}}^s) \cap \pi_{S'}^{-1}(y)\right| = \frac{\left|X(a, Q) \cap \pi_{S'}^{-1}(y)\right|}{2^{(M - 1)^m \cdot |[r] - [k_{\text{gap}}]|}}\left(1 + O\left(e^{-k_{\text{gap}}}\right)\right).
\]
Since there are precisely $R$ values of $s$ such that $y \in Z_{\text{var}}^s$, we conclude that the first moment of $\Lambda(x)$ is equal to
\[
\frac{R\left|X(a, Q) \cap \pi_{S'}^{-1}(y)\right|}{2^{(M - 1)^m \cdot |[r] - [k_{\text{gap}}]|}}\left(1 + O\left(e^{-k_{\text{gap}}}\right)\right).
\]
To compute the second moment, we expand $\Lambda(x)^2$ as
\[
\sum_{\substack{x \in X(a, Q) \\ \pi_{S'}(x) \not \in Y_{\text{bad}}}} \Lambda(x)^2
= \sum_{y \in Y \setminus Y_{\text{bad}}} \sum_{\substack{x \in X(a, Q) \\ \pi_{S'}(x) = y}} \sum_{1 \leq s \leq t} \sum_{1 \leq s' \leq t} \mathbf{1}_{x \in \text{Hull}(Z_{\text{var}}^s)} \mathbf{1}_{x \in \text{Hull}(Z_{\text{var}}^{s'})},
\]
which we split as
\[
\sum_{y \in Y \setminus Y_{\text{bad}}} \sum_{\substack{x \in X(a, Q) \\ \pi_{S'}(x) = y}} \sum_{1 \leq s \leq t} \mathbf{1}_{x \in \text{Hull}(Z_{\text{var}}^s)} + \sum_{y \in Y \setminus Y_{\text{bad}}} \sum_{\substack{x \in X(a, Q) \\ \pi_{S'}(x) = y}} \sum_{\substack{1 \leq s, s' \leq t \\ s \neq s'}} \mathbf{1}_{x \in \text{Hull}(Z_{\text{var}}^s) \cap \text{Hull}(Z_{\text{var}}^{s'})}.
\]
We have already seen how to deal with the first sum. To treat the second sum, we first rewrite it as
\[
\sum_{y \in Y \setminus Y_{\text{bad}}} \sum_{\substack{1 \leq s, s' \leq t \\ s \neq s'}} \left|X(a, Q) \cap \text{Hull}(Z_{\text{var}}^s) \cap \text{Hull}(Z_{\text{var}}^{s'}) \cap \pi_{S'}^{-1}(y)\right|.
\]
Next observe that the above sum is zero if $y \not \in Z_{\text{var}}^s \cap Z_{\text{var}}^{s'}$. If $y \in Z_{\text{var}}^s \cap Z_{\text{var}}^{s'}$, we have, again due to the Chebotarev Density Theorem and Proposition~\ref{6.3}, that
\[
\left|X(a, Q) \cap \text{Hull}(Z_{\text{var}}^s) \cap \text{Hull}(Z_{\text{var}}^{s'}) \cap \pi_{S'}^{-1}(y)\right| = \frac{\left|X(a, Q) \cap \pi_{S'}^{-1}(y)\right|}{2^{2(M - 1)^m \cdot |[r] - [k_{\text{gap}}]|}}\left(1 + O\left(e^{-k_{\text{gap}}}\right)\right).
\]
There are precisely $R^2 - R$ pairs of $(s, s')$ such that $y \in Z_{\text{var}}^s \cap Z_{\text{var}}^{s'}$ and $s \neq s'$. Hence the second moment equals
\begin{multline*}
\left(\frac{(R^2 - R)\left|X(a, Q) \cap \pi_{S'}^{-1}(y)\right|}{2^{2(M - 1)^m \cdot |[r] - [k_{\text{gap}}]|}} + \frac{R\left|X(a, Q) \cap \pi_{S'}^{-1}(y)\right|}{2^{(M - 1)^m \cdot |[r] - [k_{\text{gap}}]|}}\right)\left(1 + O\left(e^{-k_{\text{gap}}}\right)\right) \\
=\frac{R^2\left|X(a, Q) \cap \pi_{S'}^{-1}(y)\right|}{2^{2(M - 1)^m \cdot |[r] - [k_{\text{gap}}]|}}\left(1 + O\left(e^{-k_{\text{gap}}}\right)\right).
\end{multline*}
Having computed the first and second moment, we apply Chebyshev's inequality to deduce that outside a set of density $O\left(e^{-0.5k_{\text{gap}}}\right)$ in the subset of those $x \in X(a, Q)$ satisfying $\pi_{S'}(x) \not \in Y_{\text{bad}}$, we have that
\[
\left|\Lambda(x) - \frac{R\left|X(a, Q) \cap \pi_{S'}^{-1}(y)\right|}{2^{(M - 1)^m \cdot |[r] - [k_{\text{gap}}]|}}\right| \leq \frac{R\left|X(a, Q) \cap \pi_{S'}^{-1}(y)\right|}{2^{(M - 1)^m \cdot |[r] - [k_{\text{gap}}]|}}e^{-0.25k_{\text{gap}}}.
\]
From this, we easily deduce that it suffices to prove that
\[
\left|\sum_{x \in X(a, Q) \cap \text{Hull}(Z_{\text{var}}^s)} F(\Art_2(x))\right| \leq \frac{A|X(a, Q) \cap \text{Hull}(Z_{\text{var}}^s)|}{(\log \log \log N)^3}.
\]
Since we are only dealing with one $Z_{\text{var}}^s$ at the time, we will abbreviate it as $Z$. If $m = 2$, we will also write $Z = Z_1 \times Z_2$ with $i_1(F) \in Z_1$.

We will now define a field $L$ depending on the shape of $F$ as in Definition~\ref{dVarInd}. If we are in the first case, we have $m = 1$ and we set
\[
L := \prod_{(p_1, p_2) \in Z \times Z} \phi_{p_1p_2, -1}.
\]
Here $\phi_{a, b}$ denotes any (fixed) choice of element in $\mathcal{F}_{a, b}^{\text{unr}}$. If we are instead in the second case, we have $m = 2$ and we define
\[
L := \prod_{(p_1, p_2, q_1, q_2) \in Z_1 \times Z_1 \times Z_2 \times Z_2} \phi_{p_1p_2, q_1q_2}.
\]
Finally, if we are in the first case, we have $m = 1$ again and we put
\[
L := \prod_{(p_1, p_2) \in Z \times Z} \phi_{p_1p_2, x}.
\]
where
\[
x := (p_1p_2)^{c_{j_2, j_2}} \cdot (-1)^{c_{n_2 + 1, j_2}}.
\]
Let $K$ be the largest multiquadratic extension of $\Q$ inside $L$. In each case we have a natural isomorphism
\begin{align}
\label{eGaladd}
\Gal(L/K) \cong \mathcal{A}(Z).
\end{align}
In the first case, this isomorphism is given by
\[
\sigma \mapsto \left((p_1, p_2) \mapsto \Frob_{\phi_{p_1p_2, -1}/\Q}(\sigma)\right),
\]
while in the second and third case it is respectively given by
\[
\sigma \mapsto \left((p_1, p_2, q_1, q_2) \mapsto \Frob_{\phi_{p_1p_2, q_1q_2}/\Q}(\sigma)\right), \quad \sigma \mapsto \left((p_1, p_2) \mapsto \Frob_{\phi_{p_1p_2, x}/\Q}(\sigma)\right).
\]
Note that any prime $p \in X_j(a, Q)$ splits completely in $K$ by construction. Given $\sigma \in \Gal(L/K)$, we define $X_j(a, Q \cup Z, \sigma)$ be the subset of primes $p \in X_j(a, Q \cup Z)$ that map to $\sigma$ under Frobenius. Then Lemma~\ref{lIm} and Proposition~\ref{cheb} yield
\[
|X_{i_2(F)}(a, Q \cup Z, \sigma)| = \frac{|X_{i_2(F)}(a, Q \cup Z)|}{2^{(M - 1)^m}}\left(1 + O\left(e^{-k_{\text{gap}}}\right)\right).
\]
Proposition~\ref{pPointSub} shows that for almost all choices of $Q_{\text{gap}} \in \prod_{[r] - [k_{\text{gap}}] - i_2(F)} X_j(a, Q \cup Z)$ consistent with $a$, we have that $|X_{i_2(F)}(a, Q \cup Q_{\text{gap}} \cup Z)|$ is of the expected size and furthermore
\begin{align}
\label{eChebAdd}
|X_{i_2(F)}(a, Q \cup Q_{\text{gap}} \cup Z, \sigma)| = \frac{|X_{i_2(F)}(a, Q \cup Q_{\text{gap}} \cup Z)|}{2^{(M - 1)^m}}\left(1 + O\left(e^{-k_{\text{gap}}}\right)\right)
\end{align}
for all $\sigma \in \Gal(L/K)$. By construction we have that
\[
Z_{\text{final}} := \{Q\} \times Z \times \{Q_{\text{gap}}\} \times X_{i_2(F)}(a, Q \cup Q_{\text{gap}} \cup Z) \subseteq X(a, Q),
\]
so it suffices to prove that
\begin{align}
\label{eFinal}
\left|\sum_{x \in Z_{\text{final}}} F(\Art_2(x))\right| \leq \frac{A|Z_{\text{final}}|}{(\log \log \log N)^3}.
\end{align}
Now pick
\[
\epsilon := \frac{1}{(\log \log \log N)^3}.
\]
We formally apply Theorem~\ref{tBadg} to $Z \times [M]$. We see that Theorem~\ref{tBadg} guarantees the existence of $g_{\text{spec}} \in \mathcal{A}(Z \times [M])$ such that $g_{\text{spec}}$ is not $\epsilon$-bad. Now pick any $x_1, \ldots, x_M \in X_{i_2(F)}(a, Q \cup Q_{\text{gap}} \cup Z)$. Then we can define a map $[M] \times [M] \rightarrow \Gal(L/K)$ by
\[
g(i, j) := \Frob_{L/K}(x_i) + \Frob_{L/K}(x_j),
\]
which we can naturally view as a map $[M] \times [M] \rightarrow \mathcal{A}(Z)$ due to the isomorphism in equation~\ref{eGaladd}. Hence $g$ naturally becomes an element of $\mathcal{A}(Z \times [M])$. 

We claim that we can find disjoint ordered subsets $A_1, \ldots, A_k$ of $X_{i_2(F)}(a, Q \cup Q_{\text{gap}} \cup Z)$ whose union is the whole set $X_{i_2(F)}(a, Q \cup Q_{\text{gap}} \cup Z)$ except for a small remainder such that defining $g$ as above for each $A_1, \ldots, A_k$, we get $g_{\text{spec}}$ under the natural identifications.

Let $g'_{\text{spec}} : [M] \times [M] \rightarrow \Gal(L/K)$ be the map that is sent to $g_{\text{spec}}$ under the natural identifications. Suppose that elements $x_1, \ldots, x_M \in X_{i_2(F)}(a, Q \cup Q_{\text{gap}} \cup Z)$ be given. Now look at the equation
\[
g'_{\text{spec}}(i, j) := \Frob_{L/K}(x_i) + \Frob_{L/K}(x_j).
\]
We see that one can freely choose $x_1$, and then all the $\Frob_{L/K}(x_j)$ for $j > 1$ are uniquely determined by $g'_{\text{spec}}(i, j)$ and $\Frob_{L/K}(x_1)$. Now an appeal to equation~\eqref{eChebAdd} finishes the proof of our claim.

Now pick one of the $A_i$ and suppose that $A_i = \{x_1, \ldots, x_M\}$. Let $\widetilde{F}: Z_{\text{final}} \rightarrow \FF_2$ be the map that sends $x$ to $\iota \circ F(\text{Art}_2(x))$. We can restrict $\widetilde{F}$ to $A_i$ and then naturally view $\widetilde{F}$ as a map from $Z \times [M]$ to $\FF_2$. Theorem~\ref{tBadg} then implies equation~\eqref{eFinal} and therefore Proposition~\ref{p7.4b} provided that we can verify the identity $d\widetilde{F} = g'_{\text{spec}}$.

We distinguish three cases depending on the type of $F$ as in Definition~\ref{dVarInd}. In the first case, we apply Theorem~\ref{t.2.8swapmin} and Theorem~\ref{t.2.8swapped}. Let $(j_1, j_2)$ be the entry as chosen in Definition~\ref{dVarInd}, so that $c_{j_1, j_2} = c_{j_2, j_1} = 1$. Theorem~\ref{t.2.8swapped} gives
\[
d\widetilde{F}_{j_1, j_2} = g'_{\text{spec}},
\]
where $\widetilde{F}_{j_1, j_2}$ is obtained from $F_{j_1, j_2}$ in the same way as $\widetilde{F}$ was obtained from $F$. Now consider any $(j_3, j_4)$, not equal to $(j_1, j_2)$, with $1 \leq j_3 \leq n_2 + 1$, $1 \leq j_4 \leq n_2$ and $c_{j_3, j_4} = 1$. Then we have $j_3 \leq n_2$ and $c_{j_4, j_3} = 1$. Hence Theorem~\ref{t.2.8swapmin} implies
\[
d\widetilde{F}_{j_3, j_4} = 0.
\]
Altogether we conclude that $d\widetilde{F} = g'_{\text{spec}}$.

We now deal with the second case. Once more let $(j_1, j_2)$ be the entry as chosen in Definition~\ref{dVarInd}, so that $1 \leq j_1, j_2 \leq n_2$, $c_{j_1, j_2} = 1$ and $c_{j_2, j_1} = 0$. Two applications of part (ii) of Theorem~\ref{t2.8} show that
\[
d\widetilde{F}_{j_1, j_2} = g'_{\text{spec}}.
\]
Two applications of Theorem~\ref{t2.8self} show that for all $1 \leq j_2 \leq n_2$
\[
d\widetilde{F}_{j_2, j_2} = 0,
\]
while two applications of part (i) of Theorem~\ref{t2.8} imply
\[
d\widetilde{F}_{j_3, j_4} = 0
\]
for all $1 \leq j_3 \leq n_2 + 1$, $1 \leq j_4 \leq n_2$ such that $(j_1, j_2) \not \in \{(j_3, j_4), (j_4, j_3)\}$ and $j_3 \neq j_4$. This finishes the proof of the second case.

It remains to treat the third case, which follows from an application of Theorem~\ref{t2.8} and Theorem~\ref{t2.8self}.
\end{proof}

\end{document}